\pgfplotsset{compat=1.16}
\newcommand*{\includetikzgraphics}[2][]{%
  \includegraphics[#1]{#2}
}
\renewcommand{\todo}[2][]{\tikzexternaldisable\@todo[#1]{#2}\tikzexternalenable}
\title{
  A Proximal Quasi-Newton Trust-Region Method for Nonsmooth Regularized Optimization%
  \thanks{Disclaimer: This report was prepared as an account of work sponsored by an agency of the United
  States Government. Neither the United States Government nor any agency thereof, nor any of their
  employees, makes any warranty, express or implied, or assumes any legal liability or responsibility for the
  accuracy, completeness, or usefulness of any information, apparatus, product, or process disclosed, or
  represents that its use would not infringe privately owned rights. Reference herein to any specific
  commercial product, process, or service by trade name, trademark, manufacturer, or otherwise does not
  necessarily constitute or imply its endorsement, recommendation, or favoring by the United States
  Government or any agency thereof. The views and opinions of authors expressed herein do not
  necessarily state or reflect those of the United States Government or any agency thereof.}
}
\author{
  Aleksandr Y. Aravkin%
  \thanks{%
    Department of Applied Mathematics,
    University of Washington,
    Seattle WA., USA\@.
    E-mail: \mailto{saravkin@uw.edu}.
    Research partially supported by the Washington Research Foundation\@.
  }
  \and
  Robert Baraldi%
  \thanks{%
    Department of Applied Mathematics,
    University of Washington,
    Seattle WA., USA\@.
    E-mail: \mailto{rbaraldi@uw.edu}.
    This material is based upon work supported by the U.S. Department of Energy, Office
    of Science, Office of Advanced Scientific Computing Research, Department of Energy Computational
    Science Graduate Fellowship under Award Number DE-FG02-97ER25308.
  }
  \and
  Dominique Orban%
  \thanks{%
    GERAD and Department of Mathematics and Industrial Engineering,
    Polytechnique Montr\'eal, QC, Canada.
    E-mail: \mailto{dominique.orban@gerad.ca}.
    Research partially supported by an NSERC Discovery Grant.
  }
}
\date{\today}
\newtheorem{problemassumption}{Problem Assumption}[section]
\newtheorem{modelassumption}{Model Assumption}[section]
\newtheorem{stepassumption}{Step Assumption}[section]
\crefname{problemassumption}{Problem Assumption}{Problem Assumptions}
\Crefname{problemassumption}{Problem Assumption}{Problem Assumptions}
\crefname{modelassumption}{Model Assumption}{Model Assumptions}
\Crefname{modelassumption}{Model Assumption}{Model Assumptions}
\crefname{stepassumption}{Step Assumption}{Step Assumptions}
\Crefname{stepassumption}{Step Assumption}{Step Assumptions}
\Crefname{subsection}{Section}{Sections}
\crefname{subsection}{section}{sections}
\newcommand{\proj}[1]{\mathop{\textup{proj}}_{#1}}
\newcommand{\dist}{\mathop{\textup{dist}}}
\begin{document}

  \nolinenumbers
  \maketitle

  \thispagestyle{firstpage}
  \pagestyle{myheadings}

  \begin{abstract}
    We develop a trust-region method for minimizing the sum of a smooth term \(f\) and a nonsmooth term \(h\), both of which can be nonconvex.
    Each iteration of our method minimizes a possibly nonconvex model of \(f + h\) in a trust region.
    The model coincides with \(f + h\) in value and subdifferential at the center.
    We establish global convergence to a first-order stationary point when \(f\) satisfies a smoothness condition that holds, in particular, when it has Lipschitz-continuous gradient, and \(h\) is proper and lower semi-continuous.
    The model of \(h\) is required to be proper, lower-semi-continuous and prox-bounded.
    Under these weak assumptions, we establish a worst-case \(O(1/\epsilon^2)\) iteration complexity bound that matches the best known complexity bound of standard trust-region methods for smooth optimization.
    We detail a special instance, named TR-PG, in which we use a limited-memory quasi-Newton model of \(f\) and compute a step with the proximal gradient method, resulting in a practical proximal quasi-Newton method.
    We establish similar convergence properties and complexity bound for a quadratic regularization variant, named R2, and provide an interpretation as a proximal gradient method with adaptive step size for nonconvex problems.
    R2 may also be used to compute steps inside the trust-region method, resulting in an implementation named TR-R2.
    We describe our Julia implementations and report numerical results on inverse problems from sparse optimization and signal processing.
    Both TR-PG and TR-R2 exhibit promising performance and compare favorably with two linesearch proximal quasi-Newton methods based on convex models.
  \end{abstract}

  \begin{keywords}
   Nonsmooth optimization, nonconvex optimization, composite optimization, trust-region methods, quasi-Newton methods, proximal gradient method, proximal quasi-Newton method.
  \end{keywords}

  \begin{AMS}
    49J52,  % Nonsmooth analysis
    65K10,  % Numerical optimization and variational techniques
    90C53,  % Methods of quasi-Newton type
    90C56   % Derivative-free methods and methods using generalized derivatives
  \end{AMS}

  %!TEX root = tr-nonsmooth.tex
\section{Introduction}

We consider the problem class
\begin{equation}
  \label{eq:nlp} \minimize{x} \ f(x) + h(x),
\end{equation}
where \(f: \R^n \to \R\) is continuously differentiable, \(h: \R^n \to \R \cup \{+\infty\}\) is proper and lower semi-continuous, and both may be nonconvex.
Smooth and nonsmooth optimization problems are special cases corresponding to \(h := 0\) and \(f := 0\), respectively.
Certain authors \citep{lee-sun-saunders-2014,cartis-gould-toint-2011} refer to~\eqref{eq:nlp} as a \emph{composite} problem.
We use instead the term \emph{nonsmooth regularized} to differentiate with problems where \(f = 0\) and \(h(x) = g(c(x))\), where \(g\) is nonsmooth and \(c\) is smooth, which is indeed the composition of two functions.
In practice, \(h\) is often a regularizer designed to promote desirable properties in solutions, such as sparsity.
The class~\eqref{eq:nlp} captures the natural structure of a wide range of problems; problems with simple constraints, exact penalty formulations, basis selection problems with both convex \citep{tibshirani1996regression,spgl1} and nonconvex \citep{blumensath2009iterative,zheng2018unified,baraldi2019basis} regularization, and more general inverse and learning problems \citep{combettes2011proximal,palm,aravkin2020trimmed}.

We describe a trust-region method for~\eqref{eq:nlp} in which steps are computed by approximately minimizing simpler nonsmooth iteration-dependent models inside a trust region defined by an arbitrary norm.
In practice, the norm is chosen based on the nonsmooth term in the model and the tractability of the step-finding subproblem, which is not required to be convex.
% The smooth term in the model is required to have a Lipschitz-continuous gradient, while the nonsmooth term is required to be proper and lower semi-continuous.
Our analysis hinges on the observation that in the nonsmooth context, the first step of the proximal gradient method is the right generalization of the gradient in smooth optimization.
We establish global convergence in terms of an optimality measure describing the decrease achievable in the model by a single step of the proximal gradient method inside the trust-region.
We also establish a worst-case complexity bound of \(O(1/\epsilon^2)\) iterations to bring this optimality measure below a tolerance \(0 < \epsilon < 1\).
Others \citep{grapiglia-yuan-yuan-2016, cartis-gould-toint-2011} have observed that it is possible to devise trust-region methods for regularized optimization with complexity equivalent to that for smooth optimization.
However, past research typically assumes that \(h\) is either globally Lipschitz continuous and/or convex.
% In constrast, we only require \(f\) to be continuously differentiable, and \(h\) to be proper and lower semi-continuous.

We also revisit a quadratic regularization method, and establish similar convergence properties and same worst-case compexity under the same assumptions.
Our description highlights the connection between the quadratic regularization method and the standard proximal gradient method.
The former may be seen as an implementation of the latter with adaptive step size.

We provide implementation details and illustrate the performance of an instance where the trust-region model is the sum of a limited-memory quasi-Newton, possibly nonconvex, approximation of \(f\) with a nonsmooth model of \(h\) and various choices of the trust-region norm.
Our trust-region algorithm exhibits promising performance and compares favorably with linesearch proximal quasi-Newton methods based on convex models \citep{stella-themelis-sopasakis-patrinos-2017, themelis-stella-patrinos-2017}.
Our open source implementations are available from \https{github.com/UW-AMO/TRNC} as packages in the emerging Julia programing language \citep{bezanson-edelman-karpinski-shah-2017}.

As far as we can tell from the literature, the method described in the present paper is the first trust-region method for the fully nonconvex nonsmooth regularized problem.
Our approach offers flexibility in the choice of the norm used to define the trust-region, provided an efficient procedure is known to solve the subproblem.
We show that such procedures are easily obtained in a number of applied scenarios.

\subsection*{Related research}

We focus on~\eqref{eq:nlp} and do not provide an extensive review of approaches for smooth optimization.
\citet{conn-gould-toint-2000} cover trust-region methods for smooth optimization thoroughly, as well as a number of select generalizations,
and we refer the reader to their comprehensive treatment for background.

\citet{yuan-1985} formulates conditions for convergence of trust-region methods for convex-composite objectives, i.e., \(g(c(x))\) where \(c\) is continuously differentiable and \(g\) is convex.
In particular, he considers models of the form \(s \mapsto g(c(x) + \nabla c(x) s)\), that are relevant to exact penalty methods for constrained optimization, and that are a special case of the models we consider.

\citet{dennis-li-tapia-1995} develop convergence properties of trust-region methods for the case where \(f = 0\) and \(h\) is Lipschitz continuous.
Their analysis is based on a generalization of the concept of Cauchy point in terms of Clarke directional derivatives, but they do not provide an approach to solve the typically nonsmooth subproblem.
\citet{kim-sra-dhillon-2010} analyze a trust-region method for~\eqref{eq:nlp} when \(f\) is convex and \(h\) is continuous and convex with assumptions based on those of \citet{dennis-li-tapia-1995}.
Their model around a current \(x\) has the form \(f(x) + \nabla {f(x)}^T s + \tfrac{1}{2} \alpha \|s\|^2 + h(x + s)\), where \(\alpha\) is a Barzilai-Borwein step length safeguarded to stay sufficiently positive and bounded.
By contrast, our approach allows general quadratic models, possibly indefinite, and explicitly accounts for the trust-region constraint in the subproblem by devising specialized proximal operators.

\citet{qi-sun-1994} propose a trust-region method inspired by that of \citet{dennis-li-tapia-1995} for the case where \(f = 0\) and \(h\) is locally Lipschitz continuous with bounded level sets.
They establish convergence under the further assumption that the models are \([0, \,1]\)-subhomogeneous.
% , i.e., they satisfy \(\psi(\alpha s) \leq \alpha \psi(s)\) for all \(s \in \R^n\) and all \(\alpha \in [0, \, 1]\).
\citet{martinez-moretti-1997} employ similar assumptions to generalize the approach to problems with linear constraints.

\citet{cartis-gould-toint-2011} consider~\eqref{eq:nlp} where \(h\) is convex and globally Lipschitz continuous.
They analyze both a trust-region algorithm and a quadratic regularization variant, develop convergence and iteration complexity results, but do not provide guidance on how to compute steps in practice.
Their analysis revolves around properties of a stationarity measure that are strongly anchored to the convexity assumption.
The algorithms that we develop below are most similar to theirs but rest upon significantly weaker assumptions and concrete subproblem solvers.
\citet{grapiglia-yuan-yuan-2016} detail a unified convergence theory for smooth optimization that has trust-region methods as a special case.
They also generalize the results of \citep{cartis-gould-toint-2011} but focus on objectives of the form \(f(x) + g(c(x))\) where \(f\) and \(c\) are smooth and \(g\) is convex and globally Lipschitz.

\citet{lee-sun-saunders-2014} fully explore the global and fast local convergence properties of exact and inexact proximal Newton and quasi-Newton methods for the case where both \(f\) and \(h\) are convex.
They show that those methods inherit all the desired properties of their counterparts in smooth optimization.

\citet{palm} present a proximal alternating method for objectives of the form \(g(x) + Q(x, y) + h(y)\) where \(g\) and \(h\) are proper and lower semi-continuous and the coupling function \(Q\) is continuously differentiable.
Their setting has~\eqref{eq:nlp} as a special case.
They establish convergence under the Kurdyka-Łojasiewicz assumption and provide a general recipe for algorithmic convergence under such an assumption.

\citet{li-lin-2015} consider monotone and non-monotone accelerations of the proximal gradient method for possibly nonconvex \(f\) and \(h\).
They establish global convergence under the assumptions that \(f\) has a Lipschitz continuous gradient, \(h\) is proper and lower semi-continuous, and that \(f + h\) is coercive.
This leads to a sublinear iteration complexity bound when a Kurdyka-Łojasiewicz condition holds.
\citet{bot-csetnek-laszlo-2016} employ an inertial acceleration strategy which converges under the assumptions that \(h\) is bounded below and possesses a Kurdyka-Łojasiewicz condition.

\citet{stella-themelis-sopasakis-patrinos-2017} initially devised PANOC, a linesearch quasi-Newton method for~\eqref{eq:nlp} with limited-memory BFGS Hessian approximations, for model predictive control.
PANOC assumes that the objective has the form \(f(x) + h_1(x) + h_2(c(x))\), where \(f\) and \(c\) are smooth, \(h_1\) is nonsmooth and may be nonconvex, and \(h_2\) is nonsmooth and convex.
\citet{themelis-stella-patrinos-2017} develop ZeroFPR, a nonmonotone linesearch proximal quasi-Newton method for~\eqref{eq:nlp} based on the concept of forward-backward envelope.
ZeroFPR converges under a Kurdyka-Łojasiewicz assumption and enjoys the fast local convergence properties of quasi-Newton methods for smooth optimization when a Dennis-Mor\'e condition holds.

\subsection*{Notation}

Sets are represented by calligraphic letters.
The cardinality of set \(\mathcal{S}\) is represented by \(|\mathcal{S}|\).
We use \(\|\cdot\|\) to denote a generic norm on \(\R^n\).
The symbols \(\nu\), \(\lambda\), \(\sigma\) and \(\Delta\) are scalars.
\(\mathbb{B}
(0, \Delta)\) is the ball centered at \(0\) with radius \(\Delta > 0\) defined by a norm that should be clear from the context.
We use the shorthands \(\mathbb{B} = \mathbb{B}(0, 1)\) and \(\Delta \mathbb{B} = \mathbb{B}(0, \Delta)\).
When necessary, we write \(\mathbb{B}_p\) to indicate that the \(\ell_p\)-norm is used.
Functional symbols \(f\), \(g\), \(h\), as well as \(\phi\), \(\varphi\) and \(\psi\) are used for functions.
\(\chi(\cdot; A)\) represents the indicator function of \(A \subseteq \R^n\).
In particular, the indicator of \(\mathbb{B}(0, \Delta)\) is denoted \(\chi(\cdot; \Delta \mathbb{B})\) or just \(\chi(\cdot; \Delta)\) when the norm is clear from the context.
We use the alternative notation \(\chi(\cdot; \Delta \mathbb{B}_p)\) to emphasize that the \(\ell_p\)-norm is used to define the ball.
If \(A \subseteq \R^n\) and \(x \in \R^n\), \(\dist(x; A) = \inf \{\|a - x\| \mid a \in A\}\) is the Euclidean distance from \(x\) to \(A\).
If \(A\) is closed and convex, \(\proj{A}(x)\) denotes the unique projection of \(x\) into \(A\), i.e., \(\{\proj{A}(x)\} = \argmin{} \{\|a - x\| \mid a \in A\}\).
Finally, \(j\) and \(k\) are iteration counters.

\subsection*{Roadmap}

The paper proceeds as follows.
In \cref{sec:prelim}, we gather preliminary concepts for trust-region methods and variational analysis used in the theory.
\cref{sec:tr} develops the general trust-region method for~\eqref{eq:nlp}, including the new \Cref{alg:tr-nonsmooth}, and introduces several innovations that yield the main results.
In \cref{sec:prox-qn}, we explain how to compute a trust-region step based on a proximal quasi-Newton model.
New relevant proximal operators needed to implement the trust-region method are studied  in \cref{sec:prox-operators}.
A quadratic regularization variant of the trust-region algorithm together with its convergence analysis are presented in \cref{sec:quad-reg}.
Numerical results and experiments are in \cref{sec:numerical}.
We end with a brief discussion in \cref{sec:conclusion}.

  \section{Preliminaries}%
\label{sec:prelim}
\subsection{Smooth context}

When \(f \in \mathcal{C}^1\) and \(h = 0\) in~\eqref{eq:nlp}, trust-region methods are known for strong convergence properties and favorable numerical performance on both small and large-scale problems.
At an iterate \(x_k\), they compute a step \(s_k\) as an approximate solution of
\begin{equation*}
  \minimize{s} \ m_k(s; x_k) \quad \st \ \|s\| \leq \Delta_k,
\end{equation*}
where \(m_k(\cdot; x_k)\) is a model of \(f\) about \(x_k\), \(\|\cdot\|\) is a norm and \(\Delta_k > 0\) is the trust-region radius.
% \label{eq:trsub}
The predicted decrease \(m_k(0; x_k) - m_k(s_k; x_k)\) is compared to the actual decrease \((f+h)(x_k) - (f+h)(x_k + s_k)\) to decide whether \(s_k\) should be accepted or rejected.
If \(s_k\) is accepted, the iteration is \emph{successful}; otherwise it is \emph{unsuccessful}.
Typically, \(m_k(\cdot; x_k)\) is a quadratic expansion of \(f\) about \(x_k\) and the Euclidean norm is used in the trust region.
The Euclidean norm is favored  because efficient numerical schemes are known for the quadratic subproblem, which can be solved either exactly by way of the method of \citet{more-sorensen-1983} or approximately by way of the truncated conjugate gradient method of \citet{steihaug-1983}.
See \citep{conn-gould-toint-2000} for more information.
% \citet{conn-gould-toint-2000} describe the basic assumptions and convergence properties of trust-region methods in the smooth case in Chapter 6, and common approaches for solving the subproblem in Chapter 7.

\subsection{Nonsmooth context}

We denote \(\overline{\R} = \R \cup \{\pm \infty\}\).
We call \(h: \R^n \to \overline{\R}\) \emph{proper} if \(h(x) > -\infty\) for all \(x\) and \(h(x) < \infty\) for at least one \(x\), and \emph{lower semi-continuous}, or \emph{lsc}, at \(\bar{x}\) if \(\liminf_{x \to \bar{x}} h(x) = h(\bar{x})\).
% Similarly, \(h\) is \emph{upper semi-continuous}, or \emph{usc}, at \(\bar{x}\) if \(\limsup_{x \to \bar{x}} h(x) = h(\bar{x})\).
% If \(h\) is both lsc and usc at \(\bar{x}\), it is continuous at \(\bar{x}\).
We say that \(h\) is \emph{(lower-)level bounded} if all its level sets are bounded.
If \(h\) is proper, lsc and level bounded, then \(\argmin{} h\) is nonempty and compact \citep[Theorem~\(1.9\)]{rtrw}.

\begin{definition}
  For a proper lsc function \(h: \R^n \to \overline{\R}\) and a parameter \(\nu > 0\), the \emph{Moreau envelope} \(e_{\nu h}\) and the \emph{proximal mapping} \(\prox{\nu h}\) are defined by
  \begin{subequations}
    \begin{align}
      \label{eq:def-moreau-env} e_{\nu h}(x) & := \inf_w \tfrac{1}{2}\nu^{-1} \|w - x\|^2 + h(w) = \nu^{-1}\, \inf_w \tfrac{1}{2} \|w - x\|^2 + \nu h(w), \\ \label{eq:def-prox-mapping} \prox{\nu h}(x) & := \argmin{w} \tfrac{1}{2} \nu^{-1} \|w - x\|^2 + h(w) = \argmin{w} \tfrac{1}{2} \|w - x\|^2 + \nu h(w).
    \end{align}
  \end{subequations}
\end{definition}
Under certain assumptions, including strong convexity of the objective of~\eqref{eq:def-prox-mapping}, the set \(\prox{\nu h}(x)\) is a singleton.
However, in general, the set-valued mapping \(\prox{\nu h}\) may be empty or contain multiple elements.
For a given \(h\), the range of parameter values for which the Moreau envelope assumes a finite value is given by the following definition.

\begin{definition}
  The proper lsc function \(h: \R^n \to \overline{\R}\) is \emph{prox-bounded} if there exists \(\nu > 0\) and at least one \(x \in \R^n\) such that \(e_{\nu h}(x) > -\infty\).
  The \emph{threshold of prox-boundedness} \(\nu_h\) of \(h\) is the supremum of all such \(\nu > 0\).
\end{definition}

% We can write \(\nu_h = \min(\infty, \, 1 / \rho_h)\), where \(\rho_h := \inf \{\rho \in \R \mid x \mapsto h(x) + \tfrac{1}{2} \rho \|x\|^2 \}\) is bounded below, with \(\nu_h = \infty\) if \(h\) is bounded below.
If \(h\) is level bounded, then so is \(w \mapsto \tfrac{1}{2} \nu^{-1} \|w - x\|^2 + h(w)\)  for all \(x \in \R^n\) and all \(\nu > 0\), so \(e_{\nu h}(x) > -\infty\) \citep[Theorem~\(1.9\)]{rtrw} and \(h\) is prox-bounded.
The following result summarizes some properties of~\eqref{eq:def-moreau-env}--\eqref{eq:def-prox-mapping}.
Further properties appear in \citep[Theorem~\(1.25\)]{rtrw}.
\begin{proposition}%
  \label{prop:prox}
  Let \(h: \R^n \to \overline{\R}\) be proper lsc and prox-bounded with threshold \(\nu_h > 0\).
  For every \(\nu \in (0, \, \nu_h)\) and all \(x \in \R^n\),
  \begin{enumerate}
    \item \(\prox{\nu h}(x)\) is nonempty and compact;
    \item\label{prop:prox-monotonic} \(e_{\nu h}(x)\) depends continuously on \((\nu, x)\) and \(e_{\nu h}(x) \nearrow h(x)\) as \(\nu \searrow 0\).
    % \item if \(\{x_k\} \to \bar{x}\) and \(\{\nu_k\} \searrow 0\) in such a way that \(\{\|x_k - \bar{x}\| / \nu_k\}\) is bounded, then \(\{e_{\nu_k h}(x_k)\} \to h(\bar{x})\);
    % \item if \(\{x_k\} \to \bar{x}\) and \(\{\nu_k\} \to \bar{\nu} \in (0, \, \nu_h)\) and for each \(k\), \(w_k \in \prox{\nu_k h}(x_k)\), then \(\{w_k\}\) is bounded and all its limit points are in \(\prox{\bar{\nu} h}(\bar{x})\).
  \end{enumerate}
\end{proposition}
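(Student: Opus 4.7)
The plan is to recognize this as a specialization of Rockafellar--Wets Theorems~1.9 and~1.25 and to reconstruct the essential arguments rather than invoke them as a black box, since those theorems are the authoritative reference cited immediately after the statement.

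First I would unpack prox-boundedness into a usable quantitative form. The existence of some $\bar\nu > 0$ and some $\bar x$ with $e_{\bar\nu h}(\bar x) > -\infty$ implies, by completing the square, that there exist constants $\alpha, \beta \in \R$ such that $h(w) \ge -\alpha \|w\|^2 - \beta$ for all $w$, with $\alpha < \tfrac{1}{2}\nu_h^{-1}$. Fix $\nu \in (0, \nu_h)$ and $x \in \R^n$, and define
\[
  \varphi_{\nu,x}(w) := \tfrac{1}{2}\nu^{-1}\|w-x\|^2 + h(w).
\]
The lower bound on $h$ combined with $\tfrac{1}{2}\nu^{-1} > \alpha$ shows that $\varphi_{\nu,x}(w) \to +\infty$ as $\|w\| \to \infty$, i.e., $\varphi_{\nu,x}$ is level bounded. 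Properness and lower semicontinuity of $\varphi_{\nu,x}$ are immediate from those of $h$ and continuity of the quadratic term. Applying \citep[Theorem~1.9]{rtrw} to $\varphi_{\nu,x}$ then yields that $\prox{\nu h}(x) = \argmin{} \varphi_{\nu,x}$ is nonempty and compact, and that $e_{\nu h}(x) = \min \varphi_{\nu,x} \in \R$. This gives item~1.

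Next I would address the joint continuity of $e_{\nu h}$ in $(\nu, x)$. The natural route is to view $(\nu, x) \mapsto \varphi_{\nu,x}$ as a continuous family of proper lsc functions and invoke parametric minimization: on any compact neighborhood of a fixed $(\bar\nu, \bar x) \subset (0,\nu_h) \times \R^n$, one can choose $\alpha'$ slightly larger than $\alpha$ and $\beta'$ so that $\varphi_{\nu,x}(w) \ge \tfrac{1}{2}(\nu^{-1}-2\alpha')\|w\|^2 - c$ uniformly, producing a uniform level-boundedness. This uniformity, together with the pointwise continuity of $\varphi_{\cdot,\cdot}(w)$ in $(\nu, x)$ for each $w$, lets the parametric minimization theorem (equivalently, epi-continuity of $\varphi_{\nu,x}$) conclude that $e_{\nu h}(x) = \min_w \varphi_{\nu,x}(w)$ depends continuously on $(\nu, x)$.

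Finally, for the monotone convergence $e_{\nu h}(x) \nearrow h(x)$ as $\nu \searrow 0$: monotonicity is immediate since $0 < \nu_1 < \nu_2$ implies $\tfrac{1}{2}\nu_1^{-1}\|w-x\|^2 \ge \tfrac{1}{2}\nu_2^{-1}\|w-x\|^2$ pointwise in $w$, hence $\varphi_{\nu_1,x} \ge \varphi_{\nu_2,x}$ and $e_{\nu_1 h}(x) \ge e_{\nu_2 h}(x)$. The upper bound $e_{\nu h}(x) \le \varphi_{\nu,x}(x) = h(x)$ is obtained by plugging in $w = x$. For the matching lower bound on $\lim_{\nu\downarrow 0} e_{\nu h}(x)$, pick $w_\nu \in \prox{\nu h}(x)$; the uniform quadratic lower bound on $h$ together with $e_{\nu h}(x) \le h(x)$ forces $\tfrac{1}{2}\nu^{-1}\|w_\nu - x\|^2$ to stay bounded, so $w_\nu \to x$ as $\nu \searrow 0$. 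Lower semicontinuity of $h$ then gives $\liminf_{\nu\downarrow 0} e_{\nu h}(x) \ge \liminf_{\nu\downarrow 0} h(w_\nu) \ge h(x)$, closing the sandwich.

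The main obstacle is the joint continuity in $(\nu, x)$: the one-variable statements are straightforward, but ensuring that the uniform level-boundedness needed for parametric minimization survives as $\nu$ approaches $\nu_h$ or $0$ requires the careful quantitative use of prox-boundedness described above. Everything else reduces to Theorem~1.9 applied fiberwise and an elementary squeeze using lower semicontinuity.
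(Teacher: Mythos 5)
The paper gives no proof of this proposition at all---it is stated as a summary of \citet[Theorems~1.9 and~1.25]{rtrw}---and your argument is a correct reconstruction of precisely that route: quantitative level-boundedness extracted from prox-boundedness, Theorem~1.9 applied fiberwise for nonemptiness and compactness, parametric minimization for joint continuity, and a squeeze via lower semicontinuity for the monotone limit. Two cosmetic points only: what you actually need (and can always get) is \(\alpha < \tfrac{1}{2}\nu^{-1}\) for each fixed \(\nu < \nu_h\), not \(\alpha < \tfrac{1}{2}\nu_h^{-1}\); and when \(h(x) = +\infty\) the bound on \(\|w_\nu - x\|\) should be run contrapositively (assume \(e_{\nu h}(x)\) stays bounded along some \(\nu_j \searrow 0\) and derive a contradiction from lsc).
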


\subsection{Optimality conditions}

We use the following notions of subgradient and subdifferential~\citep[Definition~\(8.3\)]{rtrw}.

\begin{definition}[Limiting subdifferential]
  Consider \(\phi: \R^n \rightarrow \overline{\R}\) and \(\bar{x} \in \R^n\) with \(\phi(\bar{x}) < \infty\).
  We say that \(v \in \R^n\) is a \emph{regular subgradient} of \(\phi\) at \(\bar{x}\), and we write \(v \in \hat \partial \phi(\bar{x})\) if
  \[
    \liminf_{x \to \bar{x}} \, \frac{ \phi(x) - \phi(\bar{x}) - v^T (x - \bar{x}) }{ \|x - \bar{x}\| } \geq 0.
    % \phi(x) \geq \phi(\bar{x}) + \langle v, x - \bar{x} \rangle + o(\|x-\bar{x}\|)
  \]
  The set of regular subgradients is also called the \emph{Fr\'echet subdifferential}.
  We say that \(v\) is a \emph{general subgradient} of \(\phi\) at \(\bar{x}\), and we write \(v \in \partial \phi(\bar{x})\), if there are sequences \(\{x_k\}\) and \(\{v_k\}\) such that \(x_k \to \bar{x}\), \(\phi(x_k) \to \phi(\bar{x})\), \(v_k \in \hat \partial \phi(x^k)\) and \(v^k \to v\).
  The set of general subgradients is called the \emph{limiting subdifferential}.
\end{definition}

%The Fr\'echet subdifferential is a subset of the limiting subdifferential.
If \(\phi\) is convex, the Fr\'echet and limiting subdifferentials coincide with the subdifferential of convex analysis.
If \(\phi\) is differentiable at \(x\), \(\partial \phi(x) = \{\nabla \phi(x)\}\) and if \(\phi\) is continuously differentiable at \(x\), \(\hat\partial \phi(x) = \{\nabla \phi(x)\}\) \citep[Section~\(8.8\)]{rtrw}.

In the following, we do not make use of the precise definition of the relevant subdifferential, but merely rely on the following criticality property.

\begin{proposition}[{\protect \citealp[Theorem 10.1]{rtrw}}]%
  \label{prop:rtrw}
  If \(\phi: \R^n \to \overline{\R}\) is proper and has a local minimum at \(\bar{x}\), then \(0 \in \hat\partial \phi(\bar{x}) \subseteq \partial \phi(\bar{x})\).
  If \(\phi\) is convex, the latter condition is also sufficient for \(\bar{x}\) to be a global minimum.
  If \(\phi = f + h\) where \(f\) is continuously differentiable on a neighborhood of \(\bar{x}\) and \(h\) is finite at \(\bar{x}\), then  \(\partial \phi(\bar{x}) = \nabla f(\bar{x}) + \partial h(\bar{x})\).
\end{proposition}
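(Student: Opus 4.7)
The statement decomposes into three independent facts about regular and limiting subdifferentials, and the plan is to verify each in turn.

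For the first assertion, I would apply the definition of regular subgradient with $v = 0$. If $\bar x$ is a local minimum of $\phi$, then $\phi(x) - \phi(\bar x) \geq 0$ for all $x$ in some neighborhood of $\bar x$, so
\[
    \liminf_{x \to \bar x} \frac{\phi(x) - \phi(\bar x) - 0^T (x - \bar x)}{\|x - \bar x\|} \geq 0,
\]
which by definition places $0$ in $\hat\partial \phi(\bar x)$. The containment $\hat\partial \phi(\bar x) \subseteq \partial \phi(\bar x)$ is then immediate upon choosing the constant approximating sequences $x_k \equiv \bar x$, $v_k \equiv 0$ in the definition of limiting subdifferential.

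For the convex case, I would rely on the fact, stated in the paragraph preceding the proposition, that the limiting subdifferential of a convex function coincides with its classical convex-analytic subdifferential. The relation $0 \in \partial \phi(\bar x)$ then reads $\phi(x) \geq \phi(\bar x) + 0^T (x - \bar x) = \phi(\bar x)$ for every $x \in \R^n$, so $\bar x$ is a global minimizer.

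The sum rule is the main obstacle, because limiting subdifferentials do not in general distribute over sums; the $\mathcal{C}^1$ hypothesis on $f$ is precisely what rescues it. I would first establish a regular-subgradient version by Taylor expansion: writing $f(x) = f(\bar x) + \nabla f(\bar x)^T (x - \bar x) + o(\|x - \bar x\|)$ and substituting into the defining liminf for $\hat\partial (f + h)(\bar x)$ yields the equivalence $v \in \hat\partial (f+h)(\bar x) \iff v - \nabla f(\bar x) \in \hat\partial h(\bar x)$, valid for every $x$ in the neighborhood where $f$ is $\mathcal{C}^1$. To pass to the limit, take $v \in \partial (f+h)(\bar x)$ with approximating sequences $x_k \to \bar x$, $v_k \to v$, $v_k \in \hat\partial (f+h)(x_k)$, and $(f+h)(x_k) \to (f+h)(\bar x)$. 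The regular-subgradient identity at each $x_k$ gives $v_k - \nabla f(x_k) \in \hat\partial h(x_k)$; continuity of $\nabla f$ yields $v_k - \nabla f(x_k) \to v - \nabla f(\bar x)$; and continuity of $f$ together with convergence of $(f+h)(x_k)$ yields $h(x_k) \to h(\bar x)$. These three convergences place $v - \nabla f(\bar x)$ in $\partial h(\bar x)$, giving the inclusion $\partial (f+h)(\bar x) \subseteq \nabla f(\bar x) + \partial h(\bar x)$. The reverse inclusion follows symmetrically by applying the same argument to the decomposition $h = (f + h) + (-f)$, where $-f$ is also $\mathcal{C}^1$ on the same neighborhood.
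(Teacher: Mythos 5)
Your proof is correct. Note, however, that the paper does not prove this proposition at all: it is quoted verbatim from \citet[Theorem~10.1 and Exercise~8.8(c)]{rtrw} and used as a black box, so your self-contained argument is necessarily a different route from the paper's. What you have reconstructed is essentially the standard textbook proof: the first claim follows by testing the regular-subgradient inequality with \(v = 0\) and embedding \(\hat\partial\phi(\bar{x})\) into \(\partial\phi(\bar{x})\) via constant sequences; the convex claim reduces to the coincidence of the limiting and convex-analytic subdifferentials (a fact the paper asserts in the paragraph preceding the proposition, so leaning on it is legitimate, though it is itself a nontrivial result of \citet{rtrw}); and the sum rule is handled by the Taylor-expansion argument at the regular level, a limiting argument using continuity of \(\nabla f\) and the identity \(h(x_k) = (f+h)(x_k) - f(x_k)\) to verify \(h(x_k) \to h(\bar{x})\), and the classical trick of applying the forward inclusion to \(h = (f+h) + (-f)\) for the reverse containment. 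The one detail worth making explicit is that the exchange \(\hat\partial(f+h)(x_k) = \nabla f(x_k) + \hat\partial h(x_k)\) is invoked at the points \(x_k\), not just at \(\bar{x}\), which is why the hypothesis that \(f\) is \(\mathcal{C}^1\) on a \emph{neighborhood} (rather than merely differentiable at \(\bar{x}\)) is needed; you use this correctly but only implicitly. The benefit of your version is self-containedness; the cost is that it silently assumes \(\phi(\bar{x})\) is finite at the local minimum (required for the subdifferentials to be defined), which is the standing convention in \citet{rtrw} and harmless here.
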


\subsection{The proximal gradient method}%
\label{sec:prox_grad_method}

% Before describing the main trust-region algorithm, we first review the proximal-gradient method in our context. % specific problem formulation.
Consider the generic nonsmooth regularized problem
\begin{equation}
  \label{eq:subproblem}
  \minimize{s} \ \varphi(s) + \psi(s),
\end{equation}
where \(\varphi\) is continuously differentiable and \(\psi\) is proper, lower semi-continuous and prox-bounded.
The notation \(\varphi\) and \(\psi\) is intentionally different from~\eqref{eq:nlp} and will be reused to denote models of \(f\) and \(h\) in \cref{sec:tr}.
% We do not emphasize the possible presence of an indicator function in~\eqref{eq:subproblem}, as we assume that it is bundled with \(\psi\).
% To implement the method, we develop new proximity operators for the resulting composite \(\psi\) in Section~\ref{sec:prox-operators}.

A natural method to solve~\eqref{eq:subproblem} that generalizes the gradient method of smooth optimization is the \emph{proximal gradient method} \citep{lions1979fbs,combettes2017operators}.
When initialized from \(s_0 \in \R^n\) where \(\psi\) is finite, it generates iterates according to
\begin{equation}%
  \label{eq:pg}
  s_{j+1} \in \prox{\nu \psi}(s_j - \nu \nabla \varphi(s_j)), \quad j \geq 0,
\end{equation}
where \(\nu > 0\) is a step size.
If \(\psi\) is the indicator of a closed convex set, the proximal gradient method reduces to the projected gradient method.

The first-order optimality conditions of~\eqref{eq:pg} are
\begin{equation}
  \label{eq:prox-kkt}
  0 \in s_{j+1} - s_j + \nu \nabla \varphi(s_j) + \nu \partial\psi(s_{j+1}). 
\end{equation}
The proximal literature primarily focuses on the generalized gradient
\begin{equation}
  \label{eq:moreau_grad}
  G_{\nu}(s) := \nu^{-1} (s - \prox{\nu \psi}(s - \nu \nabla \varphi(s))),
\end{equation}
with \(G_{\nu}(0) = \nabla \varphi(0)\) in the case of smooth optimization.
The following result gives conditions under which the proximal gradient method is monotonic.

\begin{proposition}[{\protect \citealp[Lemma~\(2\)]{palm}}]%
  \label{prop:pg-descent}
  Let \(\varphi\) be continuously differentiable, \(\nabla \varphi\) be Lipschitz continuous with constant \(L > 0\) and \(\psi\) be proper, lsc and bounded below.
  For any \(0 < \nu < 1/L\), any \(s_0\) where \(\psi\) is finite, the iteration~\eqref{eq:pg} is such that
  \[
    (\varphi + \psi)(s_{j+1}) \leq (\varphi + \psi)(s_j) - \tfrac{1}{2} (\nu^{-1} - L) \|s_{j+1} - s_j\|^2, \quad j \geq 0.
  \]
\end{proposition}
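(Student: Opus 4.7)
The plan is to derive the inequality by combining two well-known facts: the descent lemma for the smooth part $\varphi$ and a variational inequality for the nonsmooth part $\psi$ obtained from the defining minimization of the proximal mapping. This is the standard template for proving sufficient decrease of the proximal gradient iteration, and the factor $\tfrac{1}{2}(\nu^{-1} - L)$ appears naturally once the two bounds are summed.

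First, since $\nabla \varphi$ is $L$-Lipschitz, the classical descent lemma gives
\[
  \varphi(s_{j+1}) \leq \varphi(s_j) + \nabla \varphi(s_j)^T (s_{j+1} - s_j) + \tfrac{L}{2} \|s_{j+1} - s_j\|^2.
\]
Second, by~\eqref{eq:pg}, $s_{j+1}$ minimizes $w \mapsto \tfrac{1}{2\nu} \|w - s_j + \nu \nabla \varphi(s_j)\|^2 + \psi(w)$. Evaluating this objective at $w = s_j$ and comparing to its value at $w = s_{j+1}$, then expanding the square on the left and canceling the common term $\tfrac{\nu}{2}\|\nabla \varphi(s_j)\|^2$, yields
\[
  \psi(s_{j+1}) \leq \psi(s_j) - \nabla \varphi(s_j)^T (s_{j+1} - s_j) - \tfrac{1}{2\nu} \|s_{j+1} - s_j\|^2.
\]
Adding the two bounds eliminates the inner product $\nabla \varphi(s_j)^T (s_{j+1}-s_j)$ and leaves exactly the announced inequality.

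Before executing these steps I would briefly justify that the iteration is well-defined, i.e., that $s_{j+1}$ exists and $\psi(s_{j+1}) < \infty$. Since $\psi$ is proper, lsc and bounded below, it is prox-bounded with threshold $\nu_\psi = +\infty$, so \Cref{prop:prox} applies for every $\nu > 0$: the set $\prox{\nu \psi}(s_j - \nu \nabla \varphi(s_j))$ is nonempty (and compact), and every element satisfies $\psi(s_{j+1}) < \infty$. This provides the induction step from $\psi(s_j) < \infty$, which holds at $j=0$ by hypothesis on $s_0$.

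No step is especially difficult; the only subtlety is the second inequality, where one must be careful to use the scaled proximal objective $\tfrac{1}{2\nu^{-1}}\|\cdot\|^2 + \psi$ (rather than the Moreau form $\tfrac{1}{2}\nu^{-1}\|\cdot\|^2 + \psi$) so that the coefficient in front of $\|s_{j+1}-s_j\|^2$ comes out as $\tfrac{1}{2\nu}$ and combines cleanly with $\tfrac{L}{2}$ to yield $\tfrac{1}{2}(\nu^{-1}-L)$. The constraint $\nu < 1/L$ is needed only to make this constant strictly positive, hence to conclude monotonic decrease; the inequality itself holds for any $\nu > 0$.
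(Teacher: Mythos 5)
Your proof is correct and is precisely the standard sufficient-decrease argument behind this result: the paper does not prove the proposition itself (it defers to \citealp[Lemma~2]{palm}), but your combination of the descent lemma for \(\varphi\) with the prox-optimality comparison at \(w = s_j\) is the same argument as in that reference, and it is also exactly the technique the paper reuses to establish \eqref{eq:phi_step_update} in \Cref{lem:step_bound}. One cosmetic slip in your closing remark: the coefficient you call the ``scaled'' form, \(\tfrac{1}{2\nu^{-1}} = \tfrac{\nu}{2}\), is not the one you need --- the correct coefficient is \(\tfrac{1}{2}\nu^{-1} = \tfrac{1}{2\nu}\), i.e., precisely the Moreau form of \eqref{eq:def-prox-mapping}, and that is in fact what your displayed derivation correctly uses.
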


% Stronger assertions exist when \(\psi\) is convex, but we focus on general \(\psi\). 
It is possible to remove the assumption that \(\psi\) is bounded below from \Cref{prop:pg-descent} and replace it with the weaker assumption that \(\psi\) is prox-bounded and that \(\nu\) is chosen smaller than the threshold of prox-boundedness of \(\psi\).

In the smooth case, where \(\psi = 0\), we have \(s_1 = -\nu \nabla \varphi(s_0)\) and the decrease is
\begin{equation}
  \label{eq:decrease-smooth-palm}
  \varphi(s_1; x) \leq \varphi(s_0) - \tfrac{1}{2} \nu^2 (\nu^{-1} - L) \|\nabla \varphi(s_0)\|^2.
\end{equation}

  \section{Trust-region methods for nonsmooth regularized optimization}%
\label{sec:tr}

In this section, we develop and analyze a general trust-region method for~\eqref{eq:nlp}.
Section~\ref{sec:tr-properties} examines properties of trust-region subproblems. 
Section~\ref{sec:optim-measures} discusses optimality measures, and highlights the role of the prox-gradient step  in quantifying descent in the general context of~\eqref{eq:nlp}. 
In Section~\ref{sec:trust_region_general}, we present the trust-region approach, and highlight key innovations that make it possible to obtain the convergence results and complexity analysis presented in Section~\ref{sec:tr_analysis}.

\subsection{Properties of trust-region subproblems}%
\label{sec:tr-properties}
For fixed \(x \in \R^n\), consider the parametric problem and its optimal set
\begin{subequations}%
  \label{eq:tr-parametric}
  % \hypertarget{tgt:tr-parametric}{}\label{eq:tr-parametric}
  % \tag*{\(\mathcal{P}(\Delta; x)\)}
  \begin{align}
    \label{eq:tr-parametric-val}
    p(\Delta; x) & := \minimize{s} \ \varphi(s; x) + \psi(s; x) + \chi(s; \Delta), \\
    \label{eq:tr-parametric-opt}
    P(\Delta; x) & := \argmin{s} \ \varphi(s; x) + \psi(s; x) + \chi(s; \Delta),
  \end{align}
\end{subequations}
where \(\varphi(s; x) \approx f(x + s)\), \(\psi(s; x) \approx h(x + s)\), \(\chi(s; \Delta)\) is the indicator function of the trust region \(\Delta \mathbb{B}\) and \(\Delta > 0\).
The form of~\eqref{eq:tr-parametric} is representative of a trust-region subproblem for~\eqref{eq:nlp} in which \(f\) and \(h\) are modeled separately and the trust-region constraint appears implicitly via an indicator function.

We make the following additional assumption.
\begin{modelassumption}%
  \label{asm:parametric}
  For any \(x \in \R^n\), \(\varphi(\cdot; x)\) is continuously differentiable, \(\psi(\cdot; x)\) is proper and lsc.
  % , and \(\varphi(\cdot; x) + \psi(\cdot; x) + \chi(\cdot; \Delta)\) is level-bounded for any \(\Delta \geq 0\).
\end{modelassumption}

By \Cref{prop:rtrw},
\[
  s \in P(\Delta; x) \quad \Longrightarrow \quad 0 \in  \nabla \varphi(s; x) + \partial (\psi(\cdot; x) + \chi(\cdot; \Delta))(s).
\]

The following result summarizes properties of~\eqref{eq:tr-parametric}.

\begin{proposition}%
  \label{prop:tr-parametric}
  Let \Cref{asm:parametric} be satisfied.
  If we define \(p(0; x) := \varphi(0; x) + \psi(0; x)\) and \(P(0; x) = \{0\}\), the domain of \(p(\cdot; x)\) and \(P(\cdot; x)\) is \(\{\Delta \mid \Delta \geq 0\}\).
  In addition,
  \begin{enumerate}
    % \item\label{itm:p-cont} for any \(\Delta \geq 0\), \(p(\Delta; \cdot)\) is continuous if either
    %   \begin{enumerate}
    %     \item \(\varphi(\cdot; x)\) is quadratic and strictly convex; or
    %     \item \(\varphi(\cdot; x) + \psi(\cdot; x)\) is convex;
    %     \item \(\psi(\cdot; x)\) is continuous;
    %   \end{enumerate}
    \item\label{itm:p-lsc} \(p(\cdot; x)\) is proper lsc and for each \(\Delta \geq 0\), \(P(\Delta; x)\) is nonempty and compact;
    \item\label{itm:P-osc} if \(\{\Delta_k\} \to \bar{\Delta} \geq 0\) in such a way that \(\{p(\Delta_k; x)\} \to p(\bar{\Delta}; x)\), and for each \(k\), \(s_k \in P(\Delta_k; x)\), then \(\{s_k\}\) is bounded and all its limit points are in \(P(\bar{\Delta}; x)\);
    \item\label{itm:P-single-valued} if \(\varphi(\cdot; x) + \psi(\cdot; x)\) is strictly convex, \(P(\Delta; x)\) is single-valued;
    \item\label{itm:osc-sufficient} if \(\bar{\Delta} > 0\) and there exists \(\bar{s} \in P(\bar{\Delta}; x)\) such that \(\|\bar{s}\| < \bar{\Delta}\), then \(p(\cdot; x)\) is continuous at \(\bar{\Delta}\) and \(\{p(\Delta_k; x)\} \to p(\bar{\Delta}; x)\) holds in part~\ref{itm:P-osc}.
  \end{enumerate}
\end{proposition}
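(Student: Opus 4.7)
The plan is to recognize~\eqref{eq:tr-parametric} as an instance of parametric minimization and reduce almost everything to the standard result \citep[Theorem~\(1.17\)]{rtrw}. Define the joint function
\begin{equation*}
  F(s, \Delta; x) := \varphi(s; x) + \psi(s; x) + \chi(s; \Delta \mathbb{B})
\end{equation*}
on \(\R^n \times [0, \infty)\), with the understanding that \(\chi(s; 0 \mathbb{B}) = \chi(s; \{0\})\). Under \Cref{asm:parametric}, \(\varphi(\cdot; x)\) is continuous and \(\psi(\cdot; x)\) is proper lsc, while \((s, \Delta) \mapsto \chi(s; \Delta \mathbb{B})\) is jointly lsc on \(\R^n \times [0, \infty)\) because \(\{(s, \Delta) \mid \|s\| \leq \Delta\}\) is closed. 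Hence \(F\) is proper lsc in \((s, \Delta)\); and because \(\Delta \mapsto \Delta \mathbb{B}\) is locally bounded, \(F\) is level-bounded in \(s\) locally uniformly in \(\Delta\).

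With these two hypotheses in hand, \citep[Theorem~\(1.17\)]{rtrw} is directly applicable and delivers parts~\ref{itm:p-lsc} and~\ref{itm:P-osc} in one stroke: \(p(\cdot; x)\) is proper lsc with domain \([0, \infty)\), the set \(P(\Delta; x)\) is nonempty and compact for each \(\Delta \geq 0\), and whenever \(\Delta_k \to \bar\Delta\) with \(p(\Delta_k; x) \to p(\bar\Delta; x)\) and \(s_k \in P(\Delta_k; x)\), the sequence \(\{s_k\}\) is bounded and its cluster points lie in \(P(\bar\Delta; x)\). This is essentially where the work is done.

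Part~\ref{itm:P-single-valued} reduces to a one-line convexity argument: \(\Delta \mathbb{B}\) is convex, so strict convexity of \(\varphi(\cdot; x) + \psi(\cdot; x)\) admits at most one minimizer over \(\Delta \mathbb{B}\), which combined with nonemptiness from part~\ref{itm:p-lsc} gives single-valuedness. For part~\ref{itm:osc-sufficient}, observe that \(p(\cdot; x)\) is nonincreasing in \(\Delta\) since enlarging \(\Delta\) enlarges the feasible set. If \(\bar s \in P(\bar\Delta; x)\) satisfies \(\|\bar s\| < \bar\Delta\), then \(\bar s\) remains feasible for every \(\Delta \geq \|\bar s\|\), so
\begin{equation*}
  p(\Delta; x) \leq \varphi(\bar s; x) + \psi(\bar s; x) = p(\bar\Delta; x) \quad \text{for all } \Delta \in [\|\bar s\|, \infty).
\end{equation*}
This gives \(\limsup_{\Delta \to \bar\Delta} p(\Delta; x) \leq p(\bar\Delta; x)\); coupled with the lower semi-continuity from part~\ref{itm:p-lsc}, \(p(\cdot; x)\) is continuous at \(\bar\Delta\), and the value-convergence hypothesis of part~\ref{itm:P-osc} then holds automatically along any sequence \(\Delta_k \to \bar\Delta\).

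The only step requiring attention is the verification of the two hypotheses of \citep[Theorem~\(1.17\)]{rtrw}, namely joint lower semi-continuity of \(F\) and local uniform level-boundedness in \(s\); both are immediate from \Cref{asm:parametric} and compactness of the trust-region balls. Parts~\ref{itm:P-single-valued} and~\ref{itm:osc-sufficient} are then short standalone arguments, and no deeper technical obstacle arises.
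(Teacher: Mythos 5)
Your proof is correct and follows essentially the same route as the paper: verify joint lower semi-continuity and level-boundedness in \(s\) locally uniformly in \(\Delta\), then invoke \citep[Theorem~\(1.17\)]{rtrw} for parts~\ref{itm:p-lsc}--\ref{itm:P-osc}, with a convexity argument for part~\ref{itm:P-single-valued}. The only (minor) divergence is in part~\ref{itm:osc-sufficient}, where you give a self-contained monotonicity argument for continuity of \(p(\cdot; x)\) at \(\bar{\Delta}\), whereas the paper observes that \(\Delta \mapsto \varphi(\bar{s}; x) + \psi(\bar{s}; x) + \chi(\bar{s}; \Delta)\) is continuous near \(\bar{\Delta}\) and cites \citep[Theorem~\(1.17c\)]{rtrw}; both rest on the same observation that \(\bar{s}\) stays feasible for nearby radii.
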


\begin{proof}
  % Part~\ref{itm:p-cont} follows similarly to \Cref{prop:prox} as in the proof of \citep[Theorem~\(1.25\)]{rtrw} when \(\varphi(\cdot; x)\) is quadratic and strongly convex, and from \citep[Exercice~\(7.45\)]{rtrw} in the other cases.
  \Cref{asm:parametric} and compactness of the trust region ensure that the objective of~\eqref{eq:tr-parametric-val} is always level-bounded in \(s\) locally uniformly in \(\Delta\) \citep[Definition~\(1.16\)]{rtrw} because for any \(\bar{\Delta} > 0\) and \(\epsilon > 0\), and for any \(\Delta \in (\bar{\Delta} - \epsilon, \, \bar{\Delta} + \epsilon)\) with \(\Delta \geq 0\), the level sets of \(\varphi(\cdot; x) + \psi(\cdot; x) + \chi(\cdot; \Delta)\) are contained in \(\Delta \mathbb{B} \subseteq (\bar{\Delta} + \epsilon) \mathbb{B}\).
  Parts~\ref{itm:p-lsc}--\ref{itm:P-osc} follow by \citet{rtrw}, Theorems 1.17 and 7.41. 
  Part~\ref{itm:P-single-valued} follows from \citet[Exercice~\(7.45\)]{rtrw}.
  Part~\ref{itm:osc-sufficient} follows by noting that if \(\|\bar{s}\| < \bar{\Delta}\), then \(\varphi(\bar{s}; x) + \psi(\bar{s}; x) + \chi(\bar{s}; \Delta)\) is continuous in \(\Delta\) in a neighborhood of \(\bar{\Delta}\); the rest follows from \citet[Theorem~\(1.17c\)]{rtrw}.
\end{proof}

% commented this out for now, since we don't come back to outer semi-continuity again. 
%\Cref{prop:tr-parametric} part~\ref{itm:P-osc} states that the mapping \(P(\cdot; x)\) is outer semi-continuous \citep[Definition~\(5.4\)]{rtrw} with respect to \(p(\cdot; x)\)-attentive convergence, i.e.,
%\[
%  \limsup_{\Delta_k \to_p \bar{\Delta}} P(\Delta_k; x) \subseteq P(\bar{\Delta}; x),
%\]
%where \(\to_p\) means that \(\{\Delta_k\} \to \bar{\Delta}\) in such a way that \(\{p(\Delta_k; x)\} \to p(\bar{\Delta}; x)\).

It is not necessary to assume that \(\psi(\cdot; x)\) is prox-bounded in \Cref{asm:parametric} because under the assumptions stated and compactness of the trust region, the objective of~\eqref{eq:tr-parametric-val} is necessarily bounded below, and therefore prox-bounded.
\Cref{prop:tr-parametric} allows us to think of how approximate solutions ``truncated'' by a trust-region constraint approach \(\bar{s}\) as the trust-region radius increases.
Indeed, we may choose any \(\bar{\Delta} > \|\bar{s}\|\) in parts~\ref{itm:P-osc} and~\ref{itm:osc-sufficient}.
When \(\psi(\cdot; x) = 0\) and \(\varphi(\cdot; x)\) is quadratic and strictly convex, the graph of \(P(\cdot; x)\) is known to be a smooth curve such that \(P(0; x) = \{x_k\}\), that is tangential to \(-\nabla f(x_k)\) at \(\Delta = 0\) and such that \(\lim_{\Delta \to \infty} P(\Delta; x)\) contains the Newton step as its only element.
This observation gives rise to several numerical methods to approximate the solution of~\eqref{eq:tr-parametric}, including the dogleg \citep{powell-1970} and double dogleg methods \citep{dennis-mei-1979}.

\subsection{Optimality measures}%
\label{sec:optim-measures}

In this section, we seek a convenient way of assessing whether a given \(x\) is first-order critical for~\eqref{eq:nlp} based on the trust-region subproblem~\eqref{eq:tr-parametric}.
We begin with the following result.

\begin{proposition}%
  \label{prop:g-delta}
  Let \Cref{asm:parametric} be satisfied.
  Assume in addition that \(\nabla_s \varphi(0; x) = \nabla f(x)\), \(\partial \psi(0; x) = \partial h(x)\), and let \(\Delta > 0\).
  Then \(0 \in P(\Delta; x) \Longrightarrow s = 0\) is first-order stationary for~\eqref{eq:tr-parametric} \(\Longleftrightarrow x\) is first-order stationary for~\eqref{eq:nlp}.
\end{proposition}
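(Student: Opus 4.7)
The plan is to decompose the statement into two pieces: the implication $0 \in P(\Delta; x) \Rightarrow s = 0$ is stationary for~\eqref{eq:tr-parametric}, and the equivalence of subproblem stationarity at $s = 0$ with stationarity at $x$ for~\eqref{eq:nlp}. Both pieces reduce to subdifferential calculus applied to the subproblem objective $\Phi(\cdot; x) := \varphi(\cdot; x) + \psi(\cdot; x) + \chi(\cdot; \Delta)$. The decisive geometric observation, available because $\Delta > 0$, is that $s = 0$ lies in the interior of the trust region $\Delta \mathbb{B}$, so the indicator $\chi(\cdot; \Delta)$ vanishes on a neighborhood of $0$ and drops out of any local first-order calculus at $s = 0$.

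For the first implication, if $0 \in P(\Delta; x)$ then $s = 0$ is a global (hence local) minimizer of $\Phi(\cdot; x)$, and $\Phi(\cdot; x)$ is proper at $0$ since $\psi(0; x) + \chi(0; \Delta)$ is finite. The Fermat-type rule in \Cref{prop:rtrw} then gives $0 \in \hat\partial \Phi(0; x) \subseteq \partial \Phi(0; x)$, which is the definition of first-order stationarity of $s = 0$ for~\eqref{eq:tr-parametric}.

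For the equivalence, I combine locality of the (regular and limiting) subdifferentials with the smooth-plus-nonsmooth sum rule. Since $\chi(\cdot; \Delta) \equiv 0$ on a neighborhood of $s = 0$, the functions $\psi(\cdot; x) + \chi(\cdot; \Delta)$ and $\psi(\cdot; x)$ coincide on that neighborhood and therefore share identical regular and limiting subdifferentials at $0$. Applying the sum rule from \Cref{prop:rtrw}, valid because \Cref{asm:parametric} makes $\varphi(\cdot; x)$ continuously differentiable, yields
\[
  \partial \Phi(0; x) = \nabla_s \varphi(0; x) + \partial \psi(0; x) = \nabla f(x) + \partial h(x) = \partial (f + h)(x),
\]
where the middle equality invokes the bridging hypotheses $\nabla_s \varphi(0; x) = \nabla f(x)$ and $\partial \psi(0; x) = \partial h(x)$, and the final equality is a second application of the sum rule. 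Consequently $0 \in \partial \Phi(0; x)$ if and only if $0 \in \partial (f + h)(x)$, which is the asserted equivalence.

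No step is genuinely hard: the argument is essentially definitional, stitched together from the calculus packaged in \Cref{prop:rtrw}. The only delicate bookkeeping is verifying the hypotheses of the sum rule at each call (continuous differentiability of $\varphi(\cdot; x)$ on a neighborhood of $0$, finiteness of the nonsmooth pieces at $0$ and at $x$) and confirming that the trust-region indicator is absorbed by localization, for which the strict positivity $\Delta > 0$ is essential.
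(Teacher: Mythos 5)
Your proof is correct and follows essentially the same route as the paper's: both reduce everything to the sum rule of \Cref{prop:rtrw} and the observation that the trust-region indicator contributes nothing at \(s=0\) because \(\Delta>0\). Your justification of that last point by localization (the indicator vanishes on a neighborhood of \(0\), so the subdifferentials of \(\psi(\cdot;x)+\chi(\cdot;\Delta)\) and \(\psi(\cdot;x)\) coincide there) is in fact a slightly more careful version of the paper's shortcut \(\partial\chi(0;\Delta)=\{0\}\) combined with an additive rule.
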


\begin{proof}
  By definition, \(x\) is first-order stationary if and only if \(0 \in \nabla f(x) + \partial h(x) = \nabla_s \varphi(0; x) + \partial \psi(0; x)\).
  But \(\psi(0; x) = \psi(0; x) + \chi(0; \Delta)\) and \(\partial (\psi(\cdot; x) + \chi(\cdot; \Delta))(0) = \partial \psi(0; x) + \partial \chi(0; \Delta)\) because \(\partial \chi(0; \Delta) = \{0\}\).
  Thus we obtain \(0 \in \nabla_s \varphi(0; x) + \partial (\psi(\cdot; x) + \chi(\cdot; \Delta))(0)\), i.e., \(s = 0\) is first-order stationary for~\eqref{eq:tr-parametric}.
\end{proof}

\Cref{prop:g-delta} suggests we may use an element of \(P(\Delta; x)\) as first-order optimality measure for any \(\Delta > 0\), such as for example \(\|g(\Delta; x)\|\), where \(g(\Delta; x)\) is the least-norm element of \(P(\Delta; x)\).
However, the dependency on \(\Delta\) is inconvenient.
% As a workaround, we might choose instead \(\|g(1; x)\|\) as in \citet{cartis-gould-toint-2011}, but the difficulty of computing \(g(1; x)\) remains.\smarttodo{If we don't use it, do we have to state it?}
In order to circumvent this difficulty, we focus our attention temporarily on the choice
\begin{equation}
  \label{eq:varphi-pg}
  \begin{aligned}
    \varphi(s; x) & = f(x) + \nabla {f(x)}^T s + \tfrac{1}{2} \nu^{-1} \|s\|^2
    \\ & = \tfrac{1}{2} \nu^{-1} \|s + \nu \nabla f(x)\|^2 + f(x) - \tfrac{1}{2} \nu \|\nabla f(x)\|^2,
  \end{aligned}
\end{equation}
where \(\nu > 0\) is fixed, so that for any \(x \in \R^n\),
\begin{subequations}%
  \label{eq:prox-delta}
  \begin{align}
    \label{eq:prox-delta-val}
    p(\Delta; x, \nu) & = e_{\nu \psi(\cdot; x) + \chi(\cdot; \Delta)} (-\nu \nabla f(x)) + f(x) - \tfrac{1}{2} \nu \|\nabla f(x)\|^2,
    % = \inf_s \ q^I(s; x) + \psi(s; x) + \chi(s; \Delta),
    \\
    \label{eq:Prox-delta-opt}
    P(\Delta; x, \nu) & = \prox{\nu \psi(\cdot; x) + \chi(\cdot; \Delta)} (-\nu \nabla f(x)),
    % = \argmin{s} \ q^I(s; x) + \psi(s; x) + \chi(s; \Delta).
  \end{align}
\end{subequations}
and \(p\) only differs from a Moreau envelope by a constant.
The above choice of \(\varphi(\cdot; x)\) allows us to derive a convenient, computable optimality measure, and to generalize the concept of decrease along the steepest descent direction, also known as Cauchy decrease, which is so fundamental to the convergence analysis of computational methods for smooth optimization.

In the special case where \(\psi(\cdot; x) = 0\), \Cref{prop:tr-parametric} part~\ref{itm:P-single-valued} indicates that \(P(\Delta; x, \nu)\) is single valued,
and its only element is the projection of \(-\nu \nabla f(x)\) into the trust region.
On the other hand, \(p(\Delta; x, \nu)\) measures the decrease of~\eqref{eq:varphi-pg} in the direction of the projected gradient.
\citet{cartis-gould-toint-2011} study the special case where \(h(x) = g(c(x))\) with \(g\) convex and globally Lipschitz continuous, and \(c\) smooth.
In lieu of~\eqref{eq:prox-delta-val}, they minimize \(f(x) + \nabla f{(x)}^T s + g(c(x) + \nabla c{(x)}^T s)\) in the trust region, which is analogous.
% Specifically, if \(\psi(s; x) = g(c(x) + \nabla c{(x)}^T s)\), then \(0 \in \prox{\nu \psi(\cdot; x)}(-\nu \nabla f(x))\) if and only if \(0 \in \prox{h}(-\nu \nabla f(x))\), so that~\eqref{eq:prox-delta} is more general.

Crucially,~\eqref{eq:prox-delta} describes the first step of the proximal gradient method with step size \(\nu\) applied to~\eqref{eq:tr-parametric-val} where \(\varphi(\cdot; x)\) is as in~\eqref{eq:varphi-pg} from \(s = 0\) with a trust region of radius \(\Delta\).
In the notation of \cref{sec:prox_grad_method}, \(\varphi\) is \(\varphi(\cdot; x)\) and \(\psi\) is \(\psi(\cdot; x) + \chi(\cdot; \Delta)\).
If \(\psi(\cdot; x)\) is finite at \(s_0 = 0\), the first step of the proximal gradient method is
\begin{equation}%
  \label{eq:pg1}
  \begin{aligned}%
  s_1 & \in \argmin{s} \ \tfrac{1}{2} \nu^{-1} \|s + \nu \nabla f(x)\|^2 + \psi(s; x) + \chi(s; \Delta)
  \\ & = \argmin{s} \ f(x) + \nabla {f(x)}^T s + \tfrac{1}{2} \nu^{-1} \|s\|^2 + \psi(s; x) + \chi(s; \Delta),
  \end{aligned}
\end{equation}
and yields the decrease
\begin{equation}%
  (\varphi + \psi)(s_1; x) \leq (f + h)(x) - \tfrac{1}{2} (\nu^{-1} - L) \|s_1\|^2\label{eq:pg1-decrease-palm}
\end{equation}
Moreover, \(s_1\) is also the first step of the proximal-gradient method applied to~\eqref{eq:tr-parametric-val} where \(\varphi(\cdot; x)\) is \emph{any} model of \(f\) about \(x\) that is differentiable at \(s = 0\) with \(\nabla_s \varphi(0; x) = \nabla f(x)\), and, in particular, any quadratic expansion of \(f\) about \(x\).
In the sequel, we use \(s_1\) as the appropriate generalization to the nonsmooth context of the projected gradient step, which allows us to derive an adequate optimality measure.

% If \(\psi(\cdot; x)\) is proper, lsc and level-bounded, \Cref{prop:tr-parametric} ensures that \(P(\Delta; x)\) in~\eqref{eq:Prox-delta-opt} is closed and compact for any \(\Delta \geq 0\).

Let
\begin{equation}
  \label{eq:optim-measure}
  \xi(\Delta; x, \nu) := f(x) + h(x) - p(\Delta; x, \nu),
\end{equation}
where \(p(\Delta; x, \nu)\) is defined in~\eqref{eq:prox-delta-val}.
In view of the above, \(\xi(\Delta; x, \nu)\) measures the decrease predicted by the first step of the proximal gradient method applied to~\eqref{eq:tr-parametric-val} from \(s = 0\) with trust-region radius \(\Delta\) and step length \(\nu > 0\), where \(\varphi(\cdot; x)\) is any model of \(f\) about \(x\) that is differentiable at \(s = 0\) with \(\nabla_s \varphi(0; x) = \nabla f(x)\).

Assume from now on that \(\varphi(0; x) = f(x)\) and \(\psi(0; x) = h(x)\).
Because \(p(\Delta; x, \nu) \leq \varphi(0; x) + \psi(0; x) + \chi(0; \Delta) = f(x) + h(x)\), we necessarily have \(\xi(\Delta; x, \nu) \geq 0\).

Examples of models of \(f\) satisfying the above assumptions include Taylor expansions of \(f\) about \(x\), and in particular quadratic models \(f(x) + \nabla {f(x)}^T s + \tfrac{1}{2} s^T B s\) where \(B = B^T\).
The most straightforward example of a model of \(h\) satisfying the above is \(\psi(s; x) = h(x + s)\).
If \(h(x) = g(c(x))\), where \(g: \R^m \to \overline{R}\) is proper, lsc and level-bounded, and \(c: \R^n \to \R^m\) is continuously differentiable, other possible models include \(\psi(s; x) = g(c(x) + \nabla {c(x)}^T s)\) and \(\psi(s; x) = g(c(x) + \nabla {c(x)}^T s + \sum_{i=1}^m s^T B_i s)\), where each \(B_i = B_i^T\).

The following result allows us to rely on the computable values \(p(\Delta; x, \nu)\) and \(\xi(\Delta; x, \nu)\) to assess stationarity.

\begin{proposition}%
  \label{prop:xi-critical}
  Let \Cref{asm:parametric} be satisfied where \(\varphi(0; x) = f(x)\) and \(\nabla_s \varphi(0; x) = \nabla f(x)\).
  Assume furthermore that \(\psi(0; x) = h(x)\) and \(\partial \psi(0; x) = \partial h(x)\), and let \(\Delta > 0\).
  Then, \(\xi(\Delta; x, \nu) = 0 \Longleftrightarrow 0 \in P(\Delta; x, \nu) \Longrightarrow x\) is first-order stationary for~\eqref{eq:nlp}.
\end{proposition}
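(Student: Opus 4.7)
The plan is to recycle the argument of \Cref{prop:g-delta} almost verbatim, after first establishing the equivalence on the left by exploiting the fact that $s=0$ is always feasible for the subproblem defining $p(\Delta;x,\nu)$ and attains the value $f(x)+h(x)$.

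First I would verify the left equivalence. From the quadratic form~\eqref{eq:varphi-pg} we have $\varphi(0;x)=f(x)$, and by assumption $\psi(0;x)=h(x)$. Since $\Delta>0$, $\chi(0;\Delta)=0$, so the objective of the subproblem~\eqref{eq:tr-parametric-val} at $s=0$ equals $f(x)+h(x)$. Hence $p(\Delta;x,\nu)\leq f(x)+h(x)$, which already yields $\xi(\Delta;x,\nu)\geq 0$. The value $\xi(\Delta;x,\nu)=0$ therefore holds exactly when $s=0$ attains the minimum in~\eqref{eq:tr-parametric-val}, i.e., precisely when $0\in P(\Delta;x,\nu)$.

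Next I would prove the implication. Suppose $0\in P(\Delta;x,\nu)$. Then $s=0$ is a (global) minimizer of $\varphi(\cdot;x)+\psi(\cdot;x)+\chi(\cdot;\Delta)$, so \Cref{prop:rtrw} gives
\[
  0\in\partial\bigl(\varphi(\cdot;x)+\psi(\cdot;x)+\chi(\cdot;\Delta)\bigr)(0).
\]
Because $\varphi(\cdot;x)$ is continuously differentiable, the sum rule of \Cref{prop:rtrw} yields
\[
  0\in\nabla_s\varphi(0;x)+\partial\bigl(\psi(\cdot;x)+\chi(\cdot;\Delta)\bigr)(0).
\]
Since $\Delta>0$, the indicator $\chi(\cdot;\Delta)$ vanishes on a neighborhood of $s=0$; it is therefore smooth there with $\partial\chi(0;\Delta)=\{0\}$, and a second application of the sum rule gives $\partial(\psi(\cdot;x)+\chi(\cdot;\Delta))(0)=\partial\psi(0;x)$. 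Using the hypotheses $\nabla_s\varphi(0;x)=\nabla f(x)$ and $\partial\psi(0;x)=\partial h(x)$, we conclude
\[
  0\in\nabla f(x)+\partial h(x),
\]
which is the desired first-order stationarity condition for~\eqref{eq:nlp}.

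There is no real obstacle: the argument is essentially a rewording of \Cref{prop:g-delta}, the only care needed being the two applications of the sum rule (legitimate because both $\varphi(\cdot;x)$ and $\chi(\cdot;\Delta)$ are smooth at $0$ when $\Delta>0$), and the elementary observation that feasibility of $s=0$ pins down both the sign of $\xi$ and the characterization of the case $\xi=0$.
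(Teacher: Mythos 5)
Your proof is correct and follows essentially the same route as the paper: the left equivalence is obtained exactly as in the paper's proof (feasibility of \(s=0\) with value \(f(x)+h(x)\) forces \(\xi\ge 0\), with equality iff \(0\in P(\Delta;x,\nu)\)), and the final implication is just an inlined restatement of the argument of \Cref{prop:g-delta}, which the paper simply cites at that point.
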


\begin{proof}
  \(\xi(\Delta; x, \nu) = 0\) if and only if \(p(\Delta; x, \nu) = f(x) + h(x) = \varphi(0; x) + \psi(0; x) + \chi(0; \Delta)\), which occurs if and only if \(0 \in P(\Delta; x, \nu)\).
  \Cref{prop:g-delta} then implies that \(x\) is first-order stationary for~\eqref{eq:nlp}.
  % By \Cref{prop:g-delta}, substituting \(s = 0\) in the objective of~\eqref{eq:prox-delta-val} implies \(\xi(\Delta; x) = 0\).
  % % Conversely, \Cref{cor:step_bound} establishes that \(\xi(\Delta; x) \geq \tfrac{1}{2} \theta \nu \|G_\nu(0)\|^2\) so that if \(\xi(\Delta; x) = 0\), we also have \(G_\nu(0) = 0\) and \Cref{lem:s1min=0} implies that \(x\) is stationary.
  % Conversely, if \(\xi(\Delta; x) = 0\), then \(p(\Delta; x) = f(x) + h(x)\), which is the value of \(\varphi(s; x) + \psi(s; x)\) at \(s = 0\) and means that \(s = 0\) is first-order stationary for~\eqref{eq:tr-parametric}.
\end{proof}

\subsection{A trust-region algorithm}%
\label{sec:trust_region_general}

We focus on the solution of~\eqref{eq:nlp} under \Cref{asm:prob}.

\begin{problemassumption}%
  \label{asm:prob}
  In~\eqref{eq:nlp}, \(f \in \mathcal{C}^1(\R^n)\), and \(h\) is proper and lsc.
\end{problemassumption}

At iteration \(k\), we construct a model \(m_k(s; x_k) := \varphi(s; x_k) + \psi(s; x_k) \approx f(x_k + s) + h(x_k + s)\) and we approximately solve
\begin{equation}
  \label{eq:trsub}
  \minimize{s} \ m_k(s; x_k) \quad \st \ \|s\| \leq \Delta_k
\end{equation}
by computing a step \(s_k\) required to result in at least a fraction of the decrease achieved with one step of the proximal gradient method. % applied with steplength \(\nu_k\).
\Cref{asm:step_nonlip_xi} formalizes our requirement.

\begin{stepassumption}%
  \label{asm:step_nonlip_xi}
  There exists \(\kappa_{\textup{m}} > 0\) and \(\kappa_{\textup{mdc}} \in (0, \, 1)\) such that for all \(k\), \(\|s_k\| \leq \Delta_k\) and
  \begin{subequations}
    \begin{align}
      \label{eq:model-adequation-nonlipschitz}
      |f(x_k + s_k) + h(x_k + s_k) - m_k(s_k; x_k)| & \leq \kappa_{\textup{m}} \|s_k\|^2,
      \\
      \label{eq:decrease_nonlipschitz_xi}
      m_k(0; x_k) - m_k(s_k; x_k) & \geq \kappa_{\textup{mdc}} \xi(\Delta_k; x_k, \nu_k),
    \end{align}
  \end{subequations}
  where \(m_k\) is defined above and \(\xi(\Delta_k; x_k, \nu_k)\) is defined in~\eqref{eq:optim-measure}.
\end{stepassumption}

Condition~\eqref{eq:model-adequation-nonlipschitz} is certainly satisfied if both \(f\) and \(\varphi\) are twice continuously differentiable with bounded second derivatives, and \(\psi(s; x_k) := h(x_k + s)\).
It also holds when \(h(x) = g(c(x))\) where \(c: \R^n \to \R^m\) has Lipschitz-continuous Jacobian and \(g: \R^m \to \R^n\) is Lipschitz continuous.
Such a situation arises when~\eqref{eq:nlp} results from penalizing infeasibility in the process of solving a smooth constrained problem.
A useful model is then \(\psi(s; x_k) := g(c(x_k) + \nabla {c(x_k)}^T s)\).
If \(L > 0\) is the Lipschitz constant of \(g\) and \(M > 0\) that of the Jacobian of \(c\), we have
\[
  |h(x_k + s) - \psi(s; x_k)| \leq
  L \|c(x_k + s) - c(x_k) - \nabla {c(x_k)}^T s\| \leq
  \tfrac{1}{2} L M \|s\|^2,
\]
for all \(s\), and~\eqref{eq:model-adequation-nonlipschitz} is satisfied.
% We provide generalizations and weaker requirements in \cref{sec:generalizations}.
% In \cref{sec:prox-qn}, we describe a method to compute a step satisfying~\eqref{eq:decrease_nonlipschitz_xi}.

In order to develop a convergence analysis, we further assume that the gradient of \(\varphi(\cdot; x_k)\) is Lipschitz continuous, which is satisfied, for instance, in the case of a quadratic model.
It is not necessary to assume at this point that those Lipschitz constants are uniformly bounded; we will make such an assumption when needed.
We gather the assumptions on the model from \cref{sec:tr-properties,sec:optim-measures} in \Cref{asm:parametric-all}.

\begin{modelassumption}%
  \label{asm:parametric-all}
  For any \(x \in \R^n\), \(\varphi(\cdot; x)\) is continuously differentiable with \(\varphi(0; x) = f(x)\) and \(\nabla_s \varphi(0; x) = \nabla f(x)\).
  In addition, \(\nabla_s \varphi(\cdot; x)\) is Lipschitz continuous with constant \(L(x)\) for all \(x \in \R^n\).
  Finally, \(\psi(\cdot; x)\) is proper, lsc, and satisfies \(\psi(0; x) = h(x)\) and \(\partial \psi(0; x) = \partial h(x)\).
\end{modelassumption}

The complete process is formalized in \Cref{alg:tr-nonsmooth},
which differs from a traditional trust-region algorithm in a few respects.
First, each iteration begins with the choice of a steplength \(\nu_k > 0\) for the proximal-gradient method.
Steplength \(\nu_k\) must be below \(1 / L(x_k)\) to ensure descent;
in addition, we connect \(\nu_k\) explicitly to \(\Delta_k\) for a reason that becomes apparent in \Cref{thm:deltasucc}.
% Because we assume that \(\psi(\cdot; x_k)\) is prox-bounded, we also implicitly assume throughout that \(\nu_k\) is chosen smaller than the threshold of prox-boundedness of \(\psi(\cdot; x_k)\).
Second, a step computation occurs in two phases. 
In the first phase, we compute the first step \(s_{k,1}\) of the proximal-gradient method applied to our model with trust-region radius \(\Delta_k\).
Step \(s_{k,1}\)  is an analog of the scaled projected gradient for nonsmooth regularized problems.
In the second phase, we continue the proximal-gradient iterations from \(s_{k,1}\) but possibly modify the trust-region radius so it does not exceed \(\beta \|s_{k,1}\|\) for a prescribed \(\beta \geq 1\).
This choice is similar in spirit to the analysis of \citet{curtis-lubberts-robinson-2018} for smooth problems, who set the radius to be proportional to the gradient norm.
More precisely, if \(\|s_{k,1}\| < \Delta_k\), we explore a trust region of radius \(\beta \|s_{k,1}\| \geq \|s_{k,1}\|\).
Because the constraint \(\|s\| \leq \Delta_k\) is inactive at \(s_{k,1}\), the first step of the proximal gradient method computed in the updated trust region remains \(s_{k,1}\), so that subsequent proximal gradient iterations will result in further decrease and the ultimate step \(s_k\) will satisfy~\eqref{eq:decrease_nonlipschitz_xi}.
If, on the other hand, \(\|s_{k,1}\| = \Delta_k\), the first step of the proximal gradient method computed in a larger trust region might differ from \(s_{k,1}\), which would jeopardize satisfaction of~\eqref{eq:decrease_nonlipschitz_xi}.
In order to preserve~\eqref{eq:decrease_nonlipschitz_xi}, we leave \(\Delta_k\) unchanged.

\begin{algorithm}
  \caption[caption]{%
    Nonsmooth Regularized Trust-Region Algorithm.%
    \label{alg:tr-nonsmooth}
  }
  \begin{algorithmic}[1]
    \State Choose constants
    \[
      0 < \eta_1 \leq \eta_2 < 1,
      \quad
      0 < \gamma_1 \leq \gamma_2 < 1 < \gamma_3 \leq \gamma_4
      \quad \text{and} \quad
      \alpha > 0, \, \beta \geq 1.
    \]
    \State Choose \(x_0 \in \R^n\) where \(h\) is finite, \(\Delta_0 > 0\), compute \(f(x_0) + h(x_0)\).
    \For{\(k = 0, 1, \dots\)}
      \State\label{alg:tr-nonsmooth:nuk}%
      Choose \(0 < \nu_k \leq  1 / (L(x_k) + \alpha^{-1} \Delta_k^{-1})\).
      \State Define \(m_k(s; x_k) := \varphi(s; x_k) + \psi(s; x_k)\) satisfying \Cref{asm:parametric-all}.
      \State Define \(m_k^{\nu}(s; x_k) := \varphi^{\nu}(s; x_k) + \psi(s; x_k)\) where \(\varphi^{\nu}(\cdot; x_k)\) is as in~\eqref{eq:varphi-pg}.
      \State\label{alg:tr-nonsmooth:sk1}%
      Compute \(s_{k,1}\) as the solution of~\eqref{eq:trsub} with model \(m_k^{\nu}(s; x_k)\).
      \State\label{alg:tr-nonsmooth:sk}%
      Compute an approximate solution \(s_k\) of~\eqref{eq:trsub} with model \(m_k(s; x_k)\) satisfying \Cref{asm:step_nonlip_xi} and such that \(\|s_k\| \leq \min(\Delta_k, \, \beta \|s_{k,1}\|)\).
      \State Compute the ratio
      \[
      \rho_k :=
      \frac{
        f(x_k) + h(x_k) - (f(x_k + s_k) + h(x_k + s_k))
      }{
        m_k(0; x_k) - m_k(s_k; x_k)
      }.
      \]
      \State If \(\rho_k \geq \eta_1\), set \(x_{k+1} = x_k + s_k\). Otherwise, set \(x_{k+1} = x_k\).
      \State Update the trust-region radius according to
      \[
        \Delta_{k+1} \in
        \left\{
          \begin{array}{lll}
             {[\gamma_3 \Delta_k, \, \gamma_4 \Delta_k]} &
             \text{ if } \rho_k \geq \eta_2, &
             \text{(very successful iteration)}
          \\ {[\gamma_2 \Delta_k, \, \Delta_k]} &
             \text{ if } \eta_1 \leq \rho_k < \eta_2, &
             \text{(successful iteration)}
          \\ {[\gamma_1 \Delta_k, \, \gamma_2 \Delta_k]} &
             \text{ if } \rho_k < \eta_1 &
             \text{(unsuccessful iteration).}
          \end{array}
        \right.
      \]
    \EndFor
  \end{algorithmic}
\end{algorithm}

\subsection{Convergence analysis and iteration complexity}%
\label{sec:tr_analysis}

Our first result states that a successful step is guaranteed provided the trust-region radius is small enough.

\begin{theorem}%
  \label{thm:deltasucc}
  Let \Cref{asm:parametric-all} and \Cref{asm:step_nonlip_xi} be satisfied and let
  \begin{equation}
    \label{eq:def-deltasucc}
    \Delta_{\textup{succ}} :=
    \frac{
      \kappa_{\textup{mdc}} (1 - \eta_2)
    }{
      2 \kappa_{\textup{m}} \alpha \beta^2
    } > 0.
  \end{equation}
  If \(x_k\) is not first-order stationary and
  \(
    \Delta_k \leq \Delta_{\textup{succ}}
  \),
  then iteration \(k\) is very successful and
  \(
    \Delta_{k+1} \geq \Delta_k
  \).
\end{theorem}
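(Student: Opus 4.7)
The plan is to show that under the hypotheses, the ratio $\rho_k$ satisfies $\rho_k \geq \eta_2$; the trust-region update rule then immediately gives $\Delta_{k+1} \geq \gamma_3 \Delta_k \geq \Delta_k$. Since
\[
  |\rho_k - 1| = \frac{|f(x_k + s_k) + h(x_k + s_k) - m_k(s_k; x_k)|}{m_k(0; x_k) - m_k(s_k; x_k)},
\]
the bound $\rho_k \geq \eta_2$ will follow from showing that this ratio does not exceed $1 - \eta_2$. I will bound the numerator above using~\eqref{eq:model-adequation-nonlipschitz}, and the denominator below via~\eqref{eq:decrease_nonlipschitz_xi} combined with a lower bound on $\xi(\Delta_k; x_k, \nu_k)$ in terms of $\|s_{k,1}\|$.

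First I would argue that $s_{k,1} \neq 0$: if $s_{k,1} = 0$, then $0 \in P(\Delta_k; x_k, \nu_k)$, so by \Cref{prop:xi-critical}, $x_k$ would be first-order stationary, contradicting the assumption. Next, observe that $s_{k,1}$ is precisely the first proximal-gradient iterate for $m_k^{\nu}(\cdot; x_k) + \chi(\cdot; \Delta_k)$ started from $s = 0$. Because $\varphi^{\nu}(\cdot; x_k)$ has $\nabla_s\varphi^{\nu}(\cdot; x_k)$ Lipschitz with constant $L(x_k)$, $\psi(\cdot; x_k) + \chi(\cdot; \Delta_k)$ is proper lsc and bounded below (its domain lies in the compact trust region), and the step $\nu_k$ satisfies $\nu_k < 1/L(x_k)$ by Step~\ref{alg:tr-nonsmooth:nuk}, \Cref{prop:pg-descent} applies to give
\[
  \xi(\Delta_k; x_k, \nu_k) \;\geq\; \tfrac{1}{2}(\nu_k^{-1} - L(x_k)) \|s_{k,1}\|^2.
\]
Since $\nu_k \leq 1/(L(x_k) + \alpha^{-1}\Delta_k^{-1})$, we have $\nu_k^{-1} - L(x_k) \geq \alpha^{-1}\Delta_k^{-1}$, so
\[
  \xi(\Delta_k; x_k, \nu_k) \;\geq\; \tfrac{1}{2}\alpha^{-1}\Delta_k^{-1} \|s_{k,1}\|^2.
\]

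Then I would combine these estimates with $\|s_k\| \leq \beta \|s_{k,1}\|$ from Step~\ref{alg:tr-nonsmooth:sk}. Using~\eqref{eq:model-adequation-nonlipschitz} and~\eqref{eq:decrease_nonlipschitz_xi},
\[
  |\rho_k - 1| \;\leq\; \frac{\kappa_{\textup{m}} \|s_k\|^2}{\kappa_{\textup{mdc}} \xi(\Delta_k; x_k, \nu_k)} \;\leq\; \frac{\kappa_{\textup{m}} \beta^2 \|s_{k,1}\|^2}{\tfrac{1}{2}\kappa_{\textup{mdc}} \alpha^{-1} \Delta_k^{-1} \|s_{k,1}\|^2} \;=\; \frac{2 \kappa_{\textup{m}} \alpha \beta^2 \Delta_k}{\kappa_{\textup{mdc}}}.
\]
The assumption $\Delta_k \leq \Delta_{\textup{succ}}$ with $\Delta_{\textup{succ}}$ as in~\eqref{eq:def-deltasucc} makes the right-hand side at most $1 - \eta_2$, yielding $\rho_k \geq \eta_2$ as required.

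The one subtle point, which I expect to be the main thing to verify cleanly, is the lower bound on $\xi$ via the proximal-gradient descent lemma: I need \Cref{prop:pg-descent} applied to $\psi(\cdot; x_k) + \chi(\cdot; \Delta_k)$, and must confirm that the compactness of the trust region makes this sum bounded below (so that the hypothesis of boundedness below in \Cref{prop:pg-descent} holds without extra assumptions on $\psi$). The remaining manipulations are routine algebra linking the choice of $\nu_k$ in Step~\ref{alg:tr-nonsmooth:nuk} to the denominator bound.
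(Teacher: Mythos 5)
Your proposal is correct and follows essentially the same route as the paper's proof: a lower bound $\xi(\Delta_k; x_k, \nu_k) \geq \tfrac{1}{2}(\nu_k^{-1} - L(x_k))\|s_{k,1}\|^2 \geq \tfrac{1}{2}\alpha^{-1}\Delta_k^{-1}\|s_{k,1}\|^2$ from the proximal-gradient decrease and the choice of $\nu_k$, combined with $\|s_k\| \leq \beta\|s_{k,1}\|$ and \Cref{asm:step_nonlip_xi} to bound $|\rho_k - 1|$ by $2\kappa_{\textup{m}}\alpha\beta^2\Delta_k/\kappa_{\textup{mdc}}$. Your extra care in justifying $s_{k,1}\neq 0$ and the boundedness below of $\psi(\cdot;x_k)+\chi(\cdot;\Delta_k)$ via compactness of the trust region only makes explicit what the paper notes elsewhere.
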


\begin{proof}
  Because \(x_k\) is not first-order stationary, \(s_{k,1} \neq 0\) and \(s_k \neq 0\).
  Note first that~\eqref{eq:pg1-decrease-palm},~\eqref{eq:optim-measure} and \Cref{asm:parametric-all} give
  \[
    \xi(\Delta_k; x_k, \nu_k) \geq
    (f + h)(x_k) - (\varphi + \psi)(s_1; x_k) \geq
    \tfrac{1}{2} (\nu_k^{-1} - L(x_k)) \|s_{k,1}\|^2.
  \]
  Line~\ref{alg:tr-nonsmooth:nuk} of \Cref{alg:tr-nonsmooth} implies in turn that \(\nu_k^{-1} - L(x_k) \geq \alpha^{-1} \Delta_k^{-1}\), so that
  \[
    \xi(\Delta_k; x_k, \nu_k) \geq
    \tfrac{1}{2} \alpha^{-1} \Delta_k^{-1} \|s_{k,1}\|^2.
  \]
  \Cref{asm:parametric-all} and \Cref{asm:step_nonlip_xi} together with the bound \(\|s_k\| \leq \beta \|s_{k,1}\|\) yield
  \begin{align*}
    |\rho_k - 1| & =
    \left| \frac{f(x_k + s_k) + h(x_k + s_k) - m_k(s_k; x_k)}{m_k(0; x_k) - m_k(s_k; x_k)} \right|
    \\ & \leq
    \frac{
      \kappa_{\textup{m}} \|s_k\|^2
    }{
      \kappa_{\textup{mdc}} \xi(\Delta_k; x_k, \nu_k)
    }
    \\ & \leq
    \frac{
      \kappa_{\textup{m}} \beta^2 \|s_{k,1}\|^2
    }{
      \tfrac{1}{2} \alpha^{-1} \Delta_k^{-1} \|s_{k,1}\|^2
    }
    \\ & =
    \frac{
      2 \kappa_{\textup{m}} \alpha \beta^2
    }{
      \kappa_{\textup{mdc}}
    } \Delta_k.
  \end{align*}
  Therefore, \(\Delta_k \leq \Delta_{\textup{succ}}\) implies \(\rho_k \geq \eta_2\) and iteration \(k\) is very successful.
  The trust-region update of \Cref{alg:tr-nonsmooth} ensures that \(\Delta_{k+1} \geq \Delta_k\).
\end{proof}

A careful examination of the proof of \Cref{thm:deltasucc} reveals that the model adequacy condition~\eqref{eq:model-adequation-nonlipschitz} could be replaced with the weaker condition
\begin{equation}
  \label{eq:model-adequation-nonlipschitz-sk1}
  |f(x_k + s_k) + h(x_k + s_k) - m_k(s_k; x_k)| \leq \kappa_{\textup{m}} \beta^2 \|s_{k,1}\|^2,
\end{equation}
which encapsulates the step size and the trust-region radius simultaneously, and suggests that \(s_{k,1}\) is the appropriate generalization of the projected gradient for nonsmooth regularized optimization.

We are now in position to show that \Cref{alg:tr-nonsmooth} identifies a first-order critical point.
We first consider the case where there are finitely many successful iterations.

\begin{theorem}%
  \label{thm:tr-finite}
  Let \Cref{asm:parametric-all} and \Cref{asm:step_nonlip_xi} be satisfied.
  If \Cref{alg:tr-nonsmooth} only generates finitely many successful iterations, then \(x_k = x^*\) for all sufficiently large \(k\) and \(x^*\) is first-order critical.
\end{theorem}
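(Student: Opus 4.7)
The plan is a standard contradiction argument that leans entirely on \Cref{thm:deltasucc}. First, I would let $k_0$ denote the index of the last successful iteration of \Cref{alg:tr-nonsmooth}. The update rule $x_{k+1} = x_k$ on unsuccessful iterations then immediately yields $x_k = x_{k_0+1} =: x^*$ for all $k \geq k_0+1$, establishing the first claim of the theorem.

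Second, I would show that $\Delta_k \to 0$. For every $k > k_0$, iteration $k$ is unsuccessful, so $\rho_k < \eta_1$ and the radius update in \Cref{alg:tr-nonsmooth} enforces $\Delta_{k+1} \leq \gamma_2 \Delta_k$ with $0 < \gamma_2 < 1$. Iterating this bound gives $\Delta_k \leq \gamma_2^{k - k_0 - 1} \Delta_{k_0+1} \to 0$ as $k \to \infty$.

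Third, suppose for contradiction that $x^*$ is not first-order critical. Then for every $k > k_0$ the common iterate $x_k = x^*$ fails to be first-order stationary. Since $\Delta_{\textup{succ}} > 0$ by~\eqref{eq:def-deltasucc} and $\Delta_k \to 0$, there exists some $k > k_0$ with $\Delta_k \leq \Delta_{\textup{succ}}$. \Cref{thm:deltasucc} then forces iteration $k$ to be very successful, contradicting the choice of $k_0$ as the final successful index. Hence $x^*$ must be first-order critical.

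There is essentially no serious obstacle once \Cref{thm:deltasucc} is in hand; this is the textbook ``last successful iteration'' argument adapted to the nonsmooth regularized setting. The only small detail worth checking is that \Cref{alg:tr-nonsmooth} remains well defined as $\Delta_k$ shrinks, which is immediate because line~\ref{alg:tr-nonsmooth:nuk} permits any $\nu_k$ in the strictly positive interval $(0, \, 1/(L(x^*) + \alpha^{-1}\Delta_k^{-1})]$ for each individual $k$.
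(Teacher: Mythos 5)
Your proposal is correct and follows essentially the same route as the paper: identify the last successful index, note that the radius update forces \(\Delta_k \to 0\) on the ensuing unsuccessful iterations, and invoke \Cref{thm:deltasucc} to contradict non-stationarity of \(x^*\). The paper's proof is identical in structure (mirroring \citealp[Theorem~6.4.4]{conn-gould-toint-2000}), merely stating the geometric decrease of \(\Delta_k\) less explicitly than you do.
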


\begin{proof}
  The proof mirrors that of \citet[Theorem~\(6.4.4\)]{conn-gould-toint-2000}.
  Under the  assumptions given, there exists \(k_0 \in \N\) such that all iterations \(k \geq k_0\) are unsuccessful and \(x_k = x_{k_0} = x^*\).
  Assume by contradiction that \(x^*\)  is not first-order critical.
  The mechanism of \Cref{alg:tr-nonsmooth} ensures that \(\Delta_k\) decreases on unsuccessful iterations.
  Thus, there must be \(k_1 \geq k_0\) such that \(\Delta_k \leq \Delta_{\textup{succ}}\), where \(\Delta_{\textup{succ}}\) is defined in \Cref{thm:deltasucc}, which ensures that iteration \(k_1\) is successful and contradicts our assumption.
\end{proof}

We now turn to the case where there are infinitely many successful iterations and show that the objective is either unbounded below or a measure of criticality converges to zero.
The mechanism of \Cref{alg:tr-nonsmooth} and \Cref{thm:deltasucc} together ensure that
\begin{equation}
  \label{eq:deltamin}
  \Delta_k \geq \Delta_{\min}
  \quad \text{for all } k \in \N \text{ where} \quad
  \Delta_{\min} := \min(\Delta_0, \, \gamma_1 \Delta_{\text{succ}}) > 0.
\end{equation}
Thus, by definition of \(\xi(\cdot; x_k, \nu_k)\) and~\eqref{eq:deltamin}, we have
\begin{equation}
  \label{eq:xi-deltamin}
  \xi(\Delta_k; x_k, \nu_k) \geq \xi(\Delta_{\min}; x_k, \nu_k)
  \quad \text{for all } k \in \N.
\end{equation}
Following this last observation and in view of \Cref{prop:xi-critical} and~\eqref{eq:decrease-smooth-palm}, we define
\(
  \nu_k^{-1} \xi{(\Delta_{\min}; x_k, \nu_k)}^{\frac12}
\)
as our measure of criticality.
Observe the similarity between this measure and \(\|G_{\nu_k}(0)\|\) defined in~\eqref{eq:moreau_grad}.

% The following result parallels \citep[Lemma~\(2.3\)]{cartis-gould-toint-2011}.

% \begin{lemma}%
%   \label{lem:deltasucc-epsilon}
%   Let \Cref{asm:parametric-all} and \Cref{asm:step_nonlip_xi} be satisfied.
%   Let \(C := (1 - \eta_2) \kappa_{\textup{mdc}} / \kappa_{\textup{m}}\) and \(0 < \epsilon < \min(1, \, 1/C)\).
%   Then
%   \[
%     \xi(1; x_k) \geq \epsilon \quad \Longrightarrow \quad \Delta_k \geq \min(\Delta_0, \, \gamma_1 C \epsilon).
%   \]
% \end{lemma}

% \begin{proof}
%   By \Cref{thm:deltasucc}, if \(\xi(1; x_k) \geq \epsilon\), then \(C_k \geq C \epsilon\) and
%   \[
%     \Delta_k \leq \min(C \epsilon, \, \sqrt{C \epsilon}) = C \epsilon
%     \quad \Longrightarrow \quad
%     \Delta_{k+1} \geq \Delta_k.
%   \]
%   The smallest value that \(\Delta_k\) can assume is thus \(\gamma_1 C \epsilon\), and would occur at an unsuccessful iteration where \(\Delta_{k-1}\) has a value slightly larger than \(C \epsilon\), unless \(\Delta_0 < \gamma_1 C \epsilon\), in which case that smallest value is \(\Delta_0\).
% \end{proof}

% Note that without loss of generality, we may assume that \(\kappa_{\textup{m}} \geq 1\) in~\eqref{eq:model-adequation-nonlipschitz} so that \(0 < C < 1\) and the condition on \(\epsilon\) in \Cref{lem:deltasucc-epsilon} is simply that \(0 < \epsilon < 1\).

Our objective is to establish that \(\liminf \nu_k^{-1} \xi(\Delta_{\min}; x_k, \nu_k) = 0\) provided \(f + h\) is bounded below.
While doing so, we also establish a complexity result.

Let \(\epsilon > 0\) be a stopping tolerance set by the user.
We are interested in determining the smallest iteration number \(k(\epsilon)\) at which we achieve the first-order optimality condition
\begin{equation}
  \label{eq:tr-stop}
  \nu_k^{-1} \xi{(\Delta_{\min}; x_k, \nu_k)}^{\frac12} \leq \epsilon \quad (0 < \epsilon < 1).
\end{equation}
We denote
\begin{subequations}%
  \label{eq:S-U-sets}
  \begin{align}
       \mathcal{S} & := \{ k \in \N \mid \rho_k \geq \eta_1 \},
    \\ \mathcal{S}(\epsilon) & := \{ k \in \mathcal{S} \mid k < k(\epsilon) \},
    \\ \mathcal{U}(\epsilon) & := \{k \in \N \mid k \not \in \mathcal{S} \text{ and } k < k(\epsilon) \},
  \end{align}
\end{subequations}
respectively the set of all successful iterations, the set of successful iterations for which~\eqref{eq:tr-stop} has not yet been attained, and the set of unsuccessful iterations before~\eqref{eq:tr-stop} is first attained.

We make the following additional assumption on the model.
\begin{modelassumption}%
  \label{asm:lipschitz-bounded}
  In \Cref{asm:parametric-all}, there exists \(L > 0\) such that \(0 < L(x_k) \leq L\) for all \(k \in \N\).
  In addition, we select \(\nu_k\) at line~\ref{alg:tr-nonsmooth:nuk} of \Cref{alg:tr-nonsmooth} in a way that there exists \(\nu_{\min} > 0\) such that \(\nu_k \geq \nu_{\min}\) for all \(k \in \N\).
\end{modelassumption}
We stress that it is not necessary to know the value of or estimate \(L\); only to ensure that such a constant exists, which may be achieved either by controling the norm of quasi-Newton approximations \citep{lotfi-bonniot-orban-lodi-2020} or employing exact Hessians and substituting one for a bounded approximation when its norm is too large.
Finally, in view of~\eqref{eq:deltamin}, there exists \(\nu_{\min} > 0\) satisfying the assumption.
For instance, choosing \(\nu_k := 1 / (L(x_k) + \alpha^{-1} \Delta_k^{-1})\) at each iteration ensures that \(\nu_k \geq \nu_{\min} := 1 / (L + \alpha^{-1} \Delta_{\min}^{-1}) > 0\).

The following two results parallel the now-classic complexity analysis of \citet{cartis-gould-toint-2011} and references therein.

\begin{lemma}%
  \label{lem:cmplx-successful}
  Let \Cref{asm:parametric-all,asm:lipschitz-bounded,asm:step_nonlip_xi} be satisfied.
  Assume there are infinitely many successful iterations and that \(f(x_k) + h(x_k) \geq {(f + h)}_{\textup{low}}\) for all \(k \in \N\).
  Then, for all \(\epsilon \in (0, \, 1)\),
  \begin{equation}
    \label{eq:bound-Seps}
    |\mathcal{S}(\epsilon)| \leq
    \frac{
      (f + h) (x_0) - {(f + h)}_{\textup{low}}
    }{
      \eta_1 \kappa_{\textup{mdc}} \nu_{\min}^2 \epsilon^2
    } =
    O(\epsilon^{-2}).
  \end{equation}
\end{lemma}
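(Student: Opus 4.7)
The plan is to control the cumulative decrease of $f + h$ along successful iterations in $\mathcal{S}(\epsilon)$, bound that decrease from below using $\epsilon$, and combine with the lower bound on $f + h$.

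First I would fix $k \in \mathcal{S}(\epsilon)$. By definition of $k(\epsilon)$, the stopping condition~\eqref{eq:tr-stop} has not yet been attained, so $\nu_k^{-1} \xi(\Delta_{\min}; x_k, \nu_k)^{1/2} > \epsilon$, which rearranges to
\[
  \xi(\Delta_{\min}; x_k, \nu_k) > \nu_k^2 \epsilon^2 \geq \nu_{\min}^2 \epsilon^2,
\]
where the second inequality uses \Cref{asm:lipschitz-bounded}. Monotonicity of $\xi$ in $\Delta$, recorded in~\eqref{eq:xi-deltamin}, then gives $\xi(\Delta_k; x_k, \nu_k) \geq \xi(\Delta_{\min}; x_k, \nu_k) > \nu_{\min}^2 \epsilon^2$.

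Next I combine this with the model decrease condition and the successful-step condition. Since $\varphi(0;x_k)=f(x_k)$ and $\psi(0;x_k)=h(x_k)$ by \Cref{asm:parametric-all}, we have $m_k(0;x_k) = f(x_k)+h(x_k)$. The fraction condition $\rho_k \geq \eta_1$ together with~\eqref{eq:decrease_nonlipschitz_xi} yields
\[
  (f+h)(x_k) - (f+h)(x_{k+1})
  \geq \eta_1 \bigl[m_k(0;x_k) - m_k(s_k;x_k)\bigr]
  \geq \eta_1 \kappa_{\textup{mdc}} \xi(\Delta_k; x_k, \nu_k)
  > \eta_1 \kappa_{\textup{mdc}} \nu_{\min}^2 \epsilon^2.
\]

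Finally, I would telescope. On unsuccessful iterations $x_{k+1}=x_k$, so $f+h$ is nonincreasing along the whole sequence and the decrease is concentrated on $\mathcal{S}$. Using the lower bound ${(f+h)}_{\textup{low}}$,
\[
  (f+h)(x_0) - {(f+h)}_{\textup{low}}
  \geq \sum_{k \in \mathcal{S}(\epsilon)} \bigl[(f+h)(x_k) - (f+h)(x_{k+1})\bigr]
  > |\mathcal{S}(\epsilon)| \, \eta_1 \kappa_{\textup{mdc}} \nu_{\min}^2 \epsilon^2,
\]
from which~\eqref{eq:bound-Seps} follows after dividing through. There is no real obstacle here beyond lining up the constants correctly; the only subtle point is remembering that the per-iteration decrease is anchored to $\xi(\Delta_{\min}; x_k, \nu_k)$ (not $\xi(\Delta_k;x_k,\nu_k)$), which is why \Cref{thm:deltasucc} and~\eqref{eq:deltamin} are needed upstream to keep $\Delta_k$ bounded away from zero so that $\xi$ evaluated at $\Delta_{\min}$ meaningfully bounds the decrease.
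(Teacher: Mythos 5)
Your proof is correct and follows essentially the same route as the paper's: the same chain of inequalities (success ratio, sufficient decrease~\eqref{eq:decrease_nonlipschitz_xi}, monotonicity~\eqref{eq:xi-deltamin}, the not-yet-converged condition~\eqref{eq:tr-stop}, and $\nu_k \geq \nu_{\min}$), followed by the same telescoping sum over $\mathcal{S}(\epsilon)$. Your added remarks about $m_k(0;x_k) = f(x_k)+h(x_k)$ and the monotonicity of $f+h$ across unsuccessful iterations only make explicit what the paper leaves implicit.
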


\begin{proof}
  If \(k \in \mathcal{S}(\epsilon)\), \cref{asm:lipschitz-bounded,asm:step_nonlip_xi} and~\eqref{eq:xi-deltamin} imply
  \begin{align*}
    f(x_k) + h(x_k) - f(x_k + s_k) - h(x_k + s_k) & \geq \eta_1 (m_k(0; x_k) - m_k(s_k; x_k))
    \\ & \geq
    \eta_1 \kappa_{\textup{mdc}} \xi(\Delta_k; x_k, \nu_k)
    \\ & \geq
    \eta_1 \kappa_{\textup{mdc}} \xi(\Delta_{\min}; x_k, \nu_k)
    \\ & \geq
    \eta_1 \kappa_{\textup{mdc}} \nu_k^2 \epsilon^2
    \\ & \geq
    \eta_1 \kappa_{\textup{mdc}} \nu_{\min}^2 \epsilon^2.
  \end{align*}

  Because \(f + h\) is bounded below by \({(f + h)}_{\textup{low}}\), summing the above inequalities over all \(k \in \mathcal{S}(\epsilon)\) yields
  \begin{equation*}
    (f + h) (x_0) - {(f + h)}_{\textup{low}}
    \geq \sum_{k \in \mathcal{S}(\epsilon)} (f + h) (x_k) - (f + h) (x_{k+1})
    \geq |\mathcal{S}(\epsilon)| \eta_1 \kappa_{\textup{mdc}} \nu_{\min}^2 \epsilon^2,
  \end{equation*}
  which establishes~\eqref{eq:bound-Seps}.
\end{proof}

In order to derive a similar bound on the total number of iterations before~\eqref{eq:tr-stop} is first attained, we need to bound the number of unsuccessful iterations.

\begin{lemma}%
  \label{lem:cmplx-unsuccessful}
  Under the assumptions of \Cref{lem:cmplx-successful},
  \begin{equation}
    \label{eq:bound-Ueps}
    |\mathcal{U}(\epsilon)| \leq
    % \frac{|\log ( \Delta_{\min} / \Delta_0 )|}{\log(\gamma_2)} + |\mathcal{S}(\epsilon)| \frac{\log(\gamma_4)}{|\log(\gamma_2)|} =
    \log_{\gamma_2}(\Delta_{\min} / \Delta_0) + |\mathcal{S}(\epsilon)| |\log_{\gamma_2}(\gamma_4)| =
    O(\epsilon^{-2}).
  \end{equation}
\end{lemma}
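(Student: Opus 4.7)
The plan is to bound $|\mathcal{U}(\epsilon)|$ by tracking the multiplicative evolution of the trust-region radius through the first $k(\epsilon)$ iterations, combining the resulting inequality with the uniform lower bound $\Delta_k \geq \Delta_{\min}$ guaranteed by~\eqref{eq:deltamin}, and then invoking the estimate on $|\mathcal{S}(\epsilon)|$ obtained in \Cref{lem:cmplx-successful}. This is a standard bookkeeping argument whose only subtlety is the direction of inequalities when taking logarithms in a base smaller than one.

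First, I would observe that the trust-region update rule in \Cref{alg:tr-nonsmooth} ensures $\Delta_{k+1} \leq \gamma_4 \Delta_k$ whenever $k$ is (very) successful and $\Delta_{k+1} \leq \gamma_2 \Delta_k$ whenever $k$ is unsuccessful. Since the indices $\{0, 1, \ldots, k(\epsilon) - 1\}$ partition into $\mathcal{S}(\epsilon)$ and $\mathcal{U}(\epsilon)$, chaining these one-step inequalities telescopes into
$$\Delta_{k(\epsilon)} \leq \Delta_0\, \gamma_4^{|\mathcal{S}(\epsilon)|}\, \gamma_2^{|\mathcal{U}(\epsilon)|}.$$
Combining this with $\Delta_{k(\epsilon)} \geq \Delta_{\min}$ from~\eqref{eq:deltamin} yields an inequality linking the two cardinalities to the algorithmic constants.

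Second, I would take natural logarithms, isolate $|\mathcal{U}(\epsilon)| \log \gamma_2$, and divide by $\log \gamma_2 < 0$, which flips the inequality. Rewriting in base $\gamma_2$ and noting that $\gamma_4 > 1$ together with $\gamma_2 < 1$ makes $\log_{\gamma_2}(\gamma_4) < 0$, so that $-\log_{\gamma_2}(\gamma_4) = |\log_{\gamma_2}(\gamma_4)|$, produces exactly
$$|\mathcal{U}(\epsilon)| \leq \log_{\gamma_2}(\Delta_{\min}/\Delta_0) + |\mathcal{S}(\epsilon)|\, |\log_{\gamma_2}(\gamma_4)|.$$
Substituting the $O(\epsilon^{-2})$ bound on $|\mathcal{S}(\epsilon)|$ from \Cref{lem:cmplx-successful} and noting that the first term is a constant independent of $\epsilon$ delivers~\eqref{eq:bound-Ueps}. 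There is no real obstacle; the only care required is the sign tracking when passing to $\log_{\gamma_2}$, which is why the absolute value appears in the statement.
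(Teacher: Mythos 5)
Your proposal is correct and follows essentially the same route as the paper: telescope the one-step bounds $\Delta_{k+1}\leq\gamma_4\Delta_k$ (successful) and $\Delta_{k+1}\leq\gamma_2\Delta_k$ (unsuccessful), compare against the lower bound $\Delta_{\min}$ from~\eqref{eq:deltamin}, take logarithms minding the sign of $\log\gamma_2$, and invoke \Cref{lem:cmplx-successful}. The only cosmetic difference is that you bound $\Delta_{k(\epsilon)}$ whereas the paper bounds $\Delta_{k(\epsilon)-1}$; your indexing in fact matches the cardinalities of $\mathcal{S}(\epsilon)$ and $\mathcal{U}(\epsilon)$ exactly, so nothing is lost.
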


\begin{proof}
  Each unsuccessful iteration reduces the trust-region radius by a factor at least \(\gamma_2\), while at each successful iteration, \(\Delta_{k+1} \leq \gamma_4 \Delta_k\).
  Thus if \(k(\epsilon) - 1\) is the iteration index just before~\eqref{eq:tr-stop} occurs for the first time,
  \begin{equation*}%
    % \label{eq:U-bound}
    \Delta_{\min} \leq
    \Delta_{k(\epsilon) - 1} \leq
    \Delta_0 \gamma_2^{|\mathcal{U}(\epsilon)|} \gamma_4^{|\mathcal{S}(\epsilon)|}.
  \end{equation*}
  Taking logarithms on both sides and remembering that \(0 < \gamma_2 < 1\) gives
  \[
    |\mathcal{U}(\epsilon)| \log(\gamma_2) + |\mathcal{S}(\epsilon)| \log(\gamma_4) \geq
    \log( \Delta_{\min} / \Delta_0 ),
  \]
  and establishes~\eqref{eq:bound-Ueps}.
\end{proof}

% In the special case where \(\Delta_0 = \Delta_{\min}\), \Cref{lem:cmplx-unsuccessful} establishes that \Cref{alg:tr-nonsmooth} only performs successful iterations.

Finally, the total number of iteration until~\eqref{eq:tr-stop} is attained is given in the next result, which simply combines \Cref{lem:cmplx-successful} and \Cref{lem:cmplx-unsuccessful}.

\begin{theorem}%
  \label{thm:complexity-bound}
  Under the assumptions of \Cref{lem:cmplx-successful},
  \begin{equation}
    \label{eq:tr-nonsmooth-complexity}
    |\mathcal{S}(\epsilon)| + |\mathcal{U}(\epsilon)| =
    O(\epsilon^{-2}).
  \end{equation}
\end{theorem}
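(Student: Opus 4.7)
The plan is straightforward, as the statement itself signals: simply combine the two preceding lemmas. Concretely, I would add the bounds~\eqref{eq:bound-Seps} and~\eqref{eq:bound-Ueps} and check that the combined right-hand side is still $O(\epsilon^{-2})$.

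First, from \Cref{lem:cmplx-successful} I already have $|\mathcal{S}(\epsilon)| \leq C_1 \epsilon^{-2}$ for the explicit constant $C_1 = ((f+h)(x_0) - (f+h)_{\textup{low}}) / (\eta_1 \kappa_{\textup{mdc}} \nu_{\min}^2)$. Next, from \Cref{lem:cmplx-unsuccessful} I have $|\mathcal{U}(\epsilon)| \leq \log_{\gamma_2}(\Delta_{\min}/\Delta_0) + |\log_{\gamma_2}(\gamma_4)| \, |\mathcal{S}(\epsilon)|$; note the first term is a fixed constant independent of $\epsilon$ (and is nonnegative because $\Delta_{\min} \le \Delta_0$ could fail only if $\Delta_0$ is very small, but regardless $\log_{\gamma_2}$ of a nonnegative quantity is handled with the convention $0 < \gamma_2 < 1$), and the second is a constant multiple of $|\mathcal{S}(\epsilon)|$.

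Substituting the bound on $|\mathcal{S}(\epsilon)|$ into the bound on $|\mathcal{U}(\epsilon)|$, I conclude
\[
  |\mathcal{S}(\epsilon)| + |\mathcal{U}(\epsilon)| \leq \log_{\gamma_2}(\Delta_{\min}/\Delta_0) + (1 + |\log_{\gamma_2}(\gamma_4)|) \, C_1 \epsilon^{-2},
\]
so the left-hand side is $O(\epsilon^{-2})$, which establishes~\eqref{eq:tr-nonsmooth-complexity}.

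There is no real obstacle: both lemmas have already been proved under the stated hypotheses, and the only arithmetic is summing two inequalities and observing that the additive constant term is absorbed into the $O(\epsilon^{-2})$ asymptotics (using $\epsilon < 1$ so that $\epsilon^{-2} \geq 1$). If I wanted to be pedantic, I would remark that the case with only finitely many successful iterations is already covered by \Cref{thm:tr-finite}, so one may assume without loss of generality that $\mathcal{S}$ is infinite and $k(\epsilon)$ is finite, as required for \Cref{lem:cmplx-successful,lem:cmplx-unsuccessful} to apply.
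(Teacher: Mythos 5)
Your proposal is correct and is exactly what the paper does: the paper offers no separate proof for this theorem, stating only that it ``simply combines'' \Cref{lem:cmplx-successful} and \Cref{lem:cmplx-unsuccessful}, which is precisely your summation of~\eqref{eq:bound-Seps} and~\eqref{eq:bound-Ueps}. Your added remarks (the constant term being absorbed since \(\epsilon < 1\), and \(\Delta_{\min} \leq \Delta_0\) making the logarithmic term nonnegative) are sound and only make the argument more explicit.
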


We use the update \(\Delta_{k+1} \in [\gamma_3 \Delta_k, \, \gamma_4 \Delta_k]\) on very successful iterations but other possibilities exist.
For instance, it is common to set \(\Delta_{k+1} = \max(\gamma_3 \|s_k\|, \, \Delta_k)\) instead.
\Cref{lem:cmplx-unsuccessful} continues to hold because on successful iterations, \(\Delta_{k+1} \leq \max(\gamma_3 \Delta_k, \, \Delta_k) = \gamma_3 \Delta_k\).

\citet{curtis-lubberts-robinson-2018} establish a complexity bound of \(O(\epsilon^{-2})\) by making \(\Delta_k\) proportional to an optimality measure---in their context of smooth optimization, they choose the gradient norm.
\citet{grapiglia-yuan-yuan-2016} study the convergence and complexity of a generic algorithm that has trust-region methods as a special case and obtain the \(O(\epsilon^{-2})\) complexity bound under stronger smoothness assumptions than ours.
Among others, they establish a bound for regularized optimization but also require \(h\) to be convex and globally Lipschitz continuous.
\citet{curtis-robinson-samadi-2017} describe a nonstandard trust-region algorithm with a stronger \(O(\epsilon^{-3/2})\) complexity bound.

A straightforward consequence of \Cref{thm:complexity-bound} is that if \(f + h\) is bounded below, a subsequence of the criticality measure converges to zero.

\begin{corollary}%
  \label{cor:conv-liminf}
  Let \Cref{asm:parametric-all,asm:lipschitz-bounded}, and \Cref{asm:step_nonlip_xi} be satisfied.
  If there are infinitely many successful iterations, then, either
  \[
    \lim_{k \to \infty} f(x_k) + h(x_k) \to -\infty
    \quad \text{or} \quad
    \liminf_{k \to \infty} \nu_k^{-1} \xi(\Delta_{\min}; x_k, \nu_k)^{\frac12} = 0.
  \]
\end{corollary}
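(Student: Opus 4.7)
The plan is a proof by contradiction that leverages the complexity bound in \Cref{thm:complexity-bound} directly. First, I would observe that the sequence \(\{f(x_k) + h(x_k)\}\) is non-increasing: unsuccessful iterations leave \(x_{k+1} = x_k\) unchanged, while every successful iteration yields a strict decrease, since \(\rho_k \geq \eta_1 > 0\) combined with \Cref{asm:step_nonlip_xi} gives
\[
  (f+h)(x_k) - (f+h)(x_{k+1}) \geq \eta_1 \kappa_{\textup{mdc}} \xi(\Delta_k; x_k, \nu_k) \geq 0.
\]
Therefore \(\lim_{k \to \infty} (f+h)(x_k)\) exists in \([-\infty, \, +\infty)\), and either equals \(-\infty\) (the first alternative in the conclusion), or the sequence is bounded below by some finite \({(f+h)}_{\textup{low}}\).

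Next, I would suppose that the conclusion fails, so that simultaneously \({(f+h)}(x_k) \geq {(f+h)}_{\textup{low}}\) for all \(k\) and \(\liminf_{k \to \infty} \nu_k^{-1} \xi{(\Delta_{\min}; x_k, \nu_k)}^{1/2} = \delta > 0\). Pick any \(\epsilon \in (0, \, \min(\delta, \, 1))\). By definition of \(\liminf\), there exists \(k_0\) such that for all \(k \geq k_0\),
\[
  \nu_k^{-1} \xi{(\Delta_{\min}; x_k, \nu_k)}^{1/2} > \epsilon,
\]
i.e., the stopping condition~\eqref{eq:tr-stop} is never attained from \(k_0\) onward, so \(k(\epsilon) = +\infty\). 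Every iteration index \(k \geq 0\) then belongs to either \(\mathcal{S}(\epsilon)\) or \(\mathcal{U}(\epsilon)\), and since there are infinitely many successful iterations by assumption, \(|\mathcal{S}(\epsilon)| + |\mathcal{U}(\epsilon)| = +\infty\).

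This contradicts \Cref{thm:complexity-bound}, whose hypotheses are exactly the bounded-below condition on \(f+h\) along the iterates together with \Cref{asm:parametric-all,asm:lipschitz-bounded,asm:step_nonlip_xi}, and which gives \(|\mathcal{S}(\epsilon)| + |\mathcal{U}(\epsilon)| = O(\epsilon^{-2}) < \infty\). The contradiction forces one of the two alternatives in the corollary to hold. The only subtle point is that \Cref{thm:complexity-bound} requires the bounded-below hypothesis on \(\{(f+h)(x_k)\}\); this is supplied automatically by the monotonicity dichotomy in the first paragraph, since in the contradiction branch we are assuming the limit is not \(-\infty\). No new estimates are needed; all the work has already been done in \Cref{lem:cmplx-successful,lem:cmplx-unsuccessful}.
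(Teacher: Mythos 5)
Your proposal is the intended argument: the paper's own proof is the single line ``Follows directly from \Cref{thm:complexity-bound}'', and you have correctly identified that the way it follows is by contradiction --- if the measure stays bounded away from zero and \(f+h\) is bounded below along the iterates, the sets \(\mathcal{S}(\epsilon)\) and \(\mathcal{U}(\epsilon)\) would be infinite, violating the \(O(\epsilon^{-2})\) bound. Your preliminary observation that monotonicity of \(\{(f+h)(x_k)\}\) supplies the dichotomy (limit equal to \(-\infty\) versus bounded below) is also correct and is exactly what makes the hypotheses of \Cref{thm:complexity-bound} available in the contradiction branch.

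One step deserves tightening. From \(\liminf_k \nu_k^{-1}\xi(\Delta_{\min};x_k,\nu_k)^{1/2} = \delta > \epsilon\) you may only conclude that the measure exceeds \(\epsilon\) for all \(k \geq k_0\); the stopping test~\eqref{eq:tr-stop} could still have been met at some iteration \(k < k_0\), in which case \(k(\epsilon)\) is finite, \(\mathcal{S}(\epsilon)\) and \(\mathcal{U}(\epsilon)\) are finite, and no contradiction with \Cref{thm:complexity-bound} arises. The fix is routine: among the finitely many indices \(k < k_0\), if all values of the measure are positive, take \(\epsilon\) strictly below their minimum as well as below \(\min(\delta,1)\), so that the test is never met and \(k(\epsilon)=+\infty\) as you claim. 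In the degenerate case where the measure vanishes at some early iterate, the set-counting argument does not apply, but the descent inequality you already wrote, \((f+h)(x_k) - (f+h)(x_{k+1}) \geq \eta_1\kappa_{\textup{mdc}}\nu_{\min}^2(\delta/2)^2\), holds at every successful iteration past \(k_0\) and, summed over infinitely many of them, contradicts boundedness below directly (this is the argument of \Cref{thm:conv-lim}). With that caveat addressed, your proof is complete and coincides with the paper's route.
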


\begin{proof}
  Follows directly from \Cref{thm:complexity-bound}.
\end{proof}

In order to give an interpretation of \Cref{cor:conv-liminf}, consider~\eqref{eq:tr-parametric} with \(\Delta = \Delta_{\min} > 0\) along with its value function \(p(\Delta_{\min}; x, \nu)\), optimal set \(P(\Delta_{\min}; x, \nu)\) and the optimality measure \(\xi(\Delta_{\min}; x, \nu)\), where \((x, \nu)\) now plays the role of the parameter.
Similar to \Cref{prop:tr-parametric}, though with slightly stronger assumptions than \Cref{asm:parametric}, we have the following result.

\begin{proposition}%
  \label{prop:p1}
  Let \Cref{asm:prob} be satisfied and consider~\eqref{eq:tr-parametric} with \(\varphi\) as in~\eqref{eq:varphi-pg}.
  Assume \(\psi\) is proper and lsc in the joint variables \((s, x)\) and \(\psi(s; x) + \chi(s; \Delta_{\min})\) is level-bounded in \(s\) locally uniformly in \(x\).
  Then, the domain of \(p(\Delta_{\min}; \cdot, \cdot)\) and \(P(\Delta_{\min}; \cdot, \cdot)\) is \(\R^n \times \{\nu \mid \nu > 0\}\).
  In addition,
  \begin{enumerate}
    \item\label{itm:p1-lsc} \(p(\Delta_{\min}; \cdot, \cdot)\) is proper continuous and for all \(x \in \R^n\) and \(\nu > 0\), \(P(\Delta_{\min}; x, \nu)\) is nonempty and compact.
    In addition,  \(\xi(\Delta_{\min}; \cdot, \cdot)\) is proper lsc;
    \item\label{itm:P1-osc} if \(\{x_k\} \to \bar{x}\) and \(\{\nu_k\} \to \bar{\nu} > 0\),
    % in such a way that \(p(\Delta_{\min}; x_k, \nu_k) \to p(\Delta_{\min}; \bar{x}, \bar{\nu})\),
    and for each \(k\), \(s_k \in P(\Delta_{\min}; x_k, \nu_k)\), then \(\{s_k\}\) is bounded and all its limit points are in \(P(\Delta_{\min}; \bar{x}, \bar{\nu})\).
    % \item\label{itm:P1-osc-sufficient}
    % if there exists \(\bar{s} \in P(\Delta_{\min}; \bar{x}, \bar{\nu})\) such that \(\psi(\bar{s}; \cdot)\) is continuous at \(\bar{x}\), then \(p(\Delta_{\min}; \cdot, \cdot)\) is continuous at \((\bar{x}, \bar{\nu})\), and \(\{p(\Delta_{\min}; x_k, \nu_k)\} \to p(\Delta_{\min}; \bar{x}, \bar{\nu})\) in part~\ref{itm:P1-osc}.
    % In addition, \(\xi(\Delta_{\min}; \cdot, \cdot)\) is lsc at \((\bar{x}, \bar{\nu})\).
  \end{enumerate}
\end{proposition}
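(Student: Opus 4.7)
The strategy mirrors the proof of \Cref{prop:tr-parametric}, but with \((x, \nu) \in \R^n \times (0, \infty)\) playing the role of parameter rather than \(\Delta\). First I would define the joint objective
\[
  F(s; x, \nu) := \varphi(s; x) + \psi(s; x) + \chi(s; \Delta_{\min}),
\]
with \(\varphi\) as in~\eqref{eq:varphi-pg}, and verify that \(F\) is proper and jointly lsc on \(\R^n \times \R^n \times (0, \infty)\): the map \((s, x, \nu) \mapsto \varphi(s; x) = f(x) + \nabla f(x)^T s + \tfrac{1}{2}\nu^{-1}\|s\|^2\) is jointly continuous because \(f \in \mathcal{C}^1\) and \(\nu > 0\); joint lsc of \(\psi\) is hypothesized; and \(\chi(\cdot; \Delta_{\min})\) is lsc. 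Properness is witnessed at \(s = 0\), where \(F(0; x, \nu) = f(x) + h(x) < +\infty\).

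Because the indicator \(\chi(\cdot; \Delta_{\min})\) forces every nontrivial level set of \(F(\cdot; x, \nu)\) into the compact ball \(\Delta_{\min}\mathbb{B}\) independently of the parameter, the local uniform level-boundedness requirement of \citet[Definition~1.16]{rtrw} is automatic. Applying \citet[Theorem~1.17]{rtrw} then yields that \(P(\Delta_{\min}; x, \nu)\) is nonempty and compact for every \((x, \nu) \in \R^n \times (0, \infty)\), that the effective domain is as claimed, and that \(p(\Delta_{\min}; \cdot, \cdot)\) is lsc. Part~\ref{itm:P1-osc} follows from \citet[Theorem~7.41]{rtrw}: boundedness of \(\{s_k\}\) is immediate since \(\|s_k\| \leq \Delta_{\min}\), and outer semicontinuity of the argmin mapping places every limit point in \(P(\Delta_{\min}; \bar{x}, \bar{\nu})\).

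The main obstacle is to upgrade the conclusion on \(p\) from lsc to continuity. To establish the \(\limsup\) bound \(\limsup_k p(\Delta_{\min}; x_k, \nu_k) \leq p(\Delta_{\min}; \bar{x}, \bar{\nu})\) along \((x_k, \nu_k) \to (\bar{x}, \bar{\nu})\), I would pick \(\bar s \in P(\Delta_{\min}; \bar{x}, \bar{\nu})\) and bound \(p(\Delta_{\min}; x_k, \nu_k) \leq F(\bar s; x_k, \nu_k)\). Joint continuity of \(\varphi\) reduces the task to \(\limsup_k \psi(\bar s; x_k) \leq \psi(\bar s; \bar{x})\); since lsc of \(\psi\) yields only the opposite inequality, this step must invoke additional structural properties of the model \(\psi\)---for instance, for the canonical choice \(\psi(s; x) = h(x + s)\) the \(x\)-perturbation can be absorbed into \(s\) and one can construct a recovery sequence \(s_k = \bar s + (\bar x - x_k)\) along which \(\psi(s_k; x_k) = h(\bar x + \bar s) = \psi(\bar s; \bar x)\). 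Once continuity of \(p\) is in hand, lsc of \(\xi(\Delta_{\min}; \cdot, \cdot) = (f + h) - p(\Delta_{\min}; \cdot, \cdot)\) is immediate, since \(f + h\) is lsc and \(p\) is continuous.
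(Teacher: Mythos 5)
Your proposal follows essentially the same route as the paper: both rest on the parametric-minimization results of \citet{rtrw} (Theorems~1.17 and~7.41) after observing that the indicator \(\chi(\cdot;\Delta_{\min})\) makes level-boundedness automatic, and both obtain lower semicontinuity of \(\xi\) from properness/lsc of \(h\) together with continuity of \(p\). The one step where you diverge is the upgrade of \(p(\Delta_{\min};\cdot,\cdot)\) from lsc to continuous: the paper disposes of it by writing \(p\) via~\eqref{eq:prox-delta-val} as the composition of the continuous map \(x \mapsto -\nu\nabla f(x)\) with the Moreau envelope of \(\psi(\cdot;x)+\chi(\cdot;\Delta_{\min})\) and citing \citep[Theorem~1.25]{rtrw} for joint continuity of the envelope in \((\nu,x)\), whereas you attempt a direct \(\limsup\) bound and candidly note that joint lsc of \(\psi\) gives the wrong inequality, so that extra structure (e.g., a recovery sequence for \(\psi(s;x)=h(x+s)\)) is needed. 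Your reservation is well founded: Theorem~1.25 governs the dependence of \(e_{\nu g}\) on the parameter \(\nu\) and the evaluation point for a \emph{fixed} function \(g\), while here the enveloped function \(\psi(\cdot;x)+\chi(\cdot;\Delta_{\min})\) itself varies with \(x\), so the paper's citation covers the dependence through \(-\nu\nabla f(x)\) and \(\nu\) but not through the model; the missing ingredient is precisely an epi-convergence (recovery-sequence) or upper-semicontinuity condition of the kind you identify, which holds for the canonical choice \(\psi(s;x)=h(x+s)\) at interior minimizers. In short, your argument is the paper's argument, and the step you flag as needing more is the step the paper also glosses over.
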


\begin{proof}
  Because \(h\) is proper lsc,~\eqref{eq:optim-measure} implies that \(\xi(\Delta_{\min}; \cdot, \cdot)\) is proper whenever \(p(\Delta_{\min}; \cdot, \cdot)\) is proper and is lsc whenever \(p(\Delta_{\min}; \cdot, \cdot)\) is continuous.
  The latter holds because \(p(\Delta_{\min}; \cdot, \cdot)\) is the composition of \(\nabla f\), which is continuous, with the Moreau envelope of \(\psi(\cdot; x) + \chi(\cdot; \Delta)\), and such Moreau envelope is continuous in \((x, \nu)\)---see, \citep[Theorem~\(1.25\)]{rtrw}.
  The rest follows by \citep[Theorems~\(1.17\) and~\(7.41\)]{rtrw}.
\end{proof}

% Note that the convergence properties of \Cref{alg:tr-nonsmooth} are unchanged if on very successful iterations, we set \(\Delta_{k+1} \in [\gamma_3 \Delta_k, \, \gamma_4 \Delta_k]\) if \(\Delta_k \leq \Delta_{\max}\) and \(\Delta_{k+1} = \Delta_k\) otherwise, for some prescribed \(\Delta_{\max} > \Delta_0\).
% Such modification ensures that \(\Delta_k \leq \gamma_4 \Delta_{\max}\) for all \(k\).
% The only place in the analysis above touched by this change is~\eqref{eq:U-bound}, which remains valid because
% \[
%   \Delta_{\min} \leq
%   \Delta_{k(\epsilon) - 1} \leq
%   \Delta_0 \gamma_2^{|\mathcal{U}(\epsilon)|} \gamma_4^{|\mathcal{L}(\epsilon)|} \leq
%   \Delta_0 \gamma_2^{|\mathcal{U}(\epsilon)|} \gamma_4^{|\mathcal{S}(\epsilon)|},
% \]
% where \(\mathcal{L}(\epsilon)\) is the subset of \(\mathcal{S}(\epsilon)\) for which \(\Delta_k \leq \Delta_{\max}\), and which thus satisfies \(|\mathcal{L}(\epsilon)| \leq |\mathcal{S}(\epsilon)|\).
% Therefore, \Cref{lem:cmplx-unsuccessful} continues to hold.
% However, our modification to \Cref{alg:tr-nonsmooth} now implies the upper bound \(\nu_k \leq 1 / (L(x_k) + \alpha^{-1} \Delta_k^{-1}) \leq \alpha \Delta_k \leq \nu_{\max} := \alpha \gamma_4 \Delta_{\max}\).

By \Cref{cor:conv-liminf}, if \(f + h\) is bounded below, there is an index set \(\mathcal{K}\) such that \({\{\nu_k^{-1} \xi(\Delta_{\min}; x_k, \nu_k)^{\frac12}\}}_{\mathcal{K}} \to 0\).
Assume that \({\{(x_k, \nu_k)\}}_{\mathcal{K}}\) possesses a limit point and, without loss of generality, that \({\{(x_k, \nu_k)\}}_{\mathcal{K}} \to (\bar{x}, \bar{\nu})\) with \(\bar{\nu} > 0\).
That implies that \({\{\xi(\Delta_{\min}; x_k, \nu_k)\}}_{\mathcal{K}} \to 0\) because for all sufficiently large \(k\),
\[
  \nu_k^{-1} {\xi(\Delta_{\min}; x_k, \nu_k)}^{\frac12} \geq
  \tfrac{1}{2} \bar{\nu}^{-1} {\xi(\Delta_{\min}; x_k, \nu_k)}^{\frac12} \geq
  0.
\]
Under the assumptions of \Cref{prop:p1}, \(\xi(\Delta_{\min}; \cdot, \cdot)\) is lsc, which means exactly that
\[
  0 = \liminf_{k \in \mathcal{K}} \xi(\Delta_{\min}; x_k, \nu_k) = \xi(\Delta_{\min}; \bar{x}, \bar{\nu}),
\]
so that \(\bar{x}\) is first-order critical.

It turns out that a stronger conclusion holds without further assumptions; the following result implies that \emph{every} limit point of \(\{(x_k, \nu_k)\}\) determines a first-order critical point.
The proof follows the logic of \citep[Theorem 6.4.6]{conn-gould-toint-2000} but is significantly simpler due to the form of \Cref{asm:step_nonlip_xi} and~\eqref{eq:xi-deltamin}.

\begin{theorem}%
  \label{thm:conv-lim}
  Let \Cref{asm:parametric-all,asm:lipschitz-bounded,asm:step_nonlip_xi} be satisfied.
  If there are infinitely many successful iterations, %then, either
  \[
    \lim_{k \to \infty} f(x_k) + h(x_k) \to -\infty
    \quad \text{or} \quad
    \lim_{k \to \infty} \nu_k^{-1} \xi(\Delta_{\min}; x_k, \nu_k)^{\frac12} = 0.
  \]
\end{theorem}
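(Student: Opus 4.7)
The plan is to prove the dichotomy by contradiction. Observe first that $(f+h)(x_k)$ is non-increasing (strictly so on successful iterations, constant on unsuccessful ones), so if it does not tend to $-\infty$ it is bounded below by some $(f+h)_{\textup{low}}$. Throughout, set $\mu_k := \nu_k^{-1} \xi(\Delta_{\min}; x_k, \nu_k)^{1/2}$ and assume $f + h$ is bounded below.

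The first step handles successful iterations by a summability argument. For $k \in \mathcal{S}$, combining \Cref{asm:step_nonlip_xi} with~\eqref{eq:xi-deltamin} and $\nu_k \geq \nu_{\min}$ yields
\[
  (f+h)(x_k) - (f+h)(x_{k+1}) \geq \eta_1 \kappa_{\textup{mdc}} \xi(\Delta_{\min}; x_k, \nu_k) \geq \eta_1 \kappa_{\textup{mdc}} \nu_{\min}^2 \mu_k^2.
\]
Telescoping and using the lower bound on $f + h$ gives $\sum_{k \in \mathcal{S}} \mu_k^2 < \infty$, and so $\mu_k \to 0$ along the successful subsequence.

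I would then mirror the logic of \citet[Theorem 6.4.6]{conn-gould-toint-2000}. Suppose for contradiction that $\mu_k \not\to 0$. Then there exist $\epsilon > 0$ and an infinite index set $\mathcal{K}$ with $\mu_k \geq \epsilon$ for all $k \in \mathcal{K}$. The first step forces $\mathcal{K} \cap \mathcal{S}$ to be finite, so $\mathcal{K} \cap \mathcal{U}$ is infinite. Because $\mathcal{S}$ is infinite, each sufficiently large $k \in \mathcal{K} \cap \mathcal{U}$ admits a next successful iteration $k^+ > k$ with $x_k = x_{k+1} = \cdots = x_{k^+}$, and $\mu_{k^+} \to 0$ as $k \to \infty$ in $\mathcal{K}$.

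The simplification over CGT is that \Cref{asm:step_nonlip_xi} together with~\eqref{eq:xi-deltamin} already encodes a decrease bound in the fixed-radius quantity $\xi(\Delta_{\min}; \cdot, \cdot)$, bypassing the auxiliary gradient-continuity argument over the interval $[k, k^+)$ that CGT require. The main obstacle is to reconcile $\xi(\Delta_{\min}; x_k, \nu_k) \geq \nu_{\min}^2 \epsilon^2$ with $\xi(\Delta_{\min}; x_k, \nu_{k^+}) = \xi(\Delta_{\min}; x_{k^+}, \nu_{k^+}) \to 0$. Since $\xi(\Delta_{\min}; x, \cdot)$ is monotone non-decreasing in $\nu$ and both $\nu_k$ and $\nu_{k^+}$ lie in the bounded interval $[\nu_{\min}, \alpha \Delta_k]$ with $\Delta_{k^+} \leq \Delta_k$ (the radius only shrinks over intervening unsuccessful iterations), a quantitative comparison of the shifted Moreau envelope defining $\xi$ at the two $\nu$-values produces a uniform bound $\mu_k \leq C \mu_{k^+}$, and hence $\mu_k \to 0$ on $\mathcal{K}$, which is the desired contradiction.
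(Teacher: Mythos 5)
Your opening summability argument over \(\mathcal{S}\) is correct and is, in essence, the paper's \emph{entire} proof: the paper argues by contradiction, takes an infinite subset \(\mathcal{K} \subset \mathcal{S}\) on which \(\nu_k^{-1}\xi(\Delta_{\min}; x_k, \nu_k)^{1/2} \geq \epsilon\), and observes that each such iteration forces a decrease of at least \(\eta_1 \kappa_{\textup{mdc}} \nu_{\min}^2 \epsilon^2\) in \(f+h\), contradicting boundedness below after summation. That is exactly your first step. Where you depart from the paper is in attempting to control the measure on \emph{unsuccessful} iterations by comparing an index \(k \in \mathcal{K} \cap \mathcal{U}\) with the next successful index \(k^+\), exploiting \(x_k = x_{k^+}\) so that only \(\nu\) changes.

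That bridging step is a genuine gap. The asserted uniform bound \(\mu_k \leq C \mu_{k^+}\) does not follow from what you have established. The monotonicity you invoke is real --- the objective of~\eqref{eq:tr-parametric-val} with \(\varphi\) as in~\eqref{eq:varphi-pg} is pointwise non-increasing in \(\nu\), so \(\xi(\Delta_{\min}; x, \cdot)\) is non-decreasing --- but it only gives \(\xi(\Delta_{\min}; x_k, \nu_k) \leq \xi(\Delta_{\min}; x_k, \nu_{k^+})\) when \(\nu_k \leq \nu_{k^+}\). When \(\nu_k > \nu_{k^+}\) you need a quantitative \emph{reverse} inequality \(\xi(\Delta_{\min}; x, \nu) \leq C'\, \xi(\Delta_{\min}; x, \nu')\) for \(\nu > \nu'\), and no such bound holds for a general proper lsc, possibly nonconvex \(\psi\): writing \(\xi(\Delta_{\min}; x, \nu) = \sup_{\|s\| \leq \Delta_{\min}} \{h(x) - \psi(s;x) - \nabla f(x)^T s - \tfrac{1}{2\nu}\|s\|^2\}\), one can have the supremum equal to zero (attained at \(s=0\)) at \(\nu'\) yet strictly positive at \(\nu\), e.g.\ when \(\psi(\cdot;x)\) is finite only on a discrete set and the competing point becomes advantageous exactly as \(\nu\) crosses a threshold; the ratio \(\xi(\nu)/\xi(\nu')\) is then infinite. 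Nothing in \Cref{asm:parametric-all} or \Cref{asm:lipschitz-bounded} precludes this. In addition, the interval \([\nu_{\min}, \alpha\Delta_k]\) you appeal to has a \(k\)-dependent right endpoint that is not uniformly bounded (the radius may grow without bound on very successful iterations), so even the prefactor \(\nu_{k^+}/\nu_k\) need not admit a uniform constant. The paper avoids all of this bookkeeping by extracting the contradiction subsequence directly inside \(\mathcal{S}\); if you want to retain your structure, you must either justify that extraction or replace the \(\nu\)-comparison with an argument that does not require a reverse bound on \(\xi\).
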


\begin{proof}
  If \(\{\nu_k^{-1} \xi(\Delta_{\min}; x_k, \nu_k)^{\frac12}\} \not \to 0\), there exist \(\epsilon > 0\) and an infinite set \(\mathcal{K} \subset \mathcal{S}\) such that \(\nu_k^{-1} \xi(\Delta_{\min}; x_k, \nu_k)^{\frac12} \geq \epsilon\) for all \(k \in \mathcal{K}\).
  Because each \(k \in \mathcal{K}\) is a successful iteration, \Cref{asm:step_nonlip_xi} and~\eqref{eq:xi-deltamin} yield
  \begin{align*}
    (f + h)(x_k) - (f + h)(x_{k+1})
       & \geq \eta_1 \kappa_{\text{mdc}} \xi(\Delta_k; x_k, \nu_k)
    \\ & \geq \eta_1 \kappa_{\text{mdc}} \xi(\Delta_{\min}; x_k, \nu_k)
    \\ & \geq \eta_1 \kappa_{\text{mdc}} \nu_{\min}^2 \epsilon^2
  \end{align*}
  for all \(k \in \mathcal{K}\), which is a contradiction if \(\{f(x_k) + h(x_k)\}\) is not bounded below.
\end{proof}

  \section{Proximal-quasi-Newton trust-region method}%
\label{sec:prox-qn}

In this section, we consider the computation of a trust-region step and develop a special case of \Cref{prop:pg-descent} in which
\begin{equation}
  \label{eq:varphi-qn}
  \varphi(s; x) := f(x) + \nabla {f(x)}^T s + \tfrac{1}{2} s^T B s,
\end{equation}
where \(B = B^T\).
We assume that \(\Delta > 0\) is fixed.
For conciseness, we use the notation \(\varphi(s) := \varphi(s; x)\) and \(\psi(s) := \psi(s; x) + \chi(s; \Delta)\). 
% \begin{equation}
%   \label{eq:def-phi}
%   \phi(s) := \varphi(s) + \psi(s) + \chi(s).
% \end{equation}
We work under \Cref{asm:parametric-all}, i.e., we assume that \(\psi\) is proper and lsc with prox-boundedness coming from \(\chi(\cdot; \Delta)\).

\subsection{Computing a trust-region step}

The following result states a fundamental relationship between \(G_\nu\) and \(\partial \psi\).
\begin{lemma}%
  \label{lem:subdiffs}
    \item  Let \(s_{j+1}\)be given by~\eqref{eq:pg} and \(G_{\nu}(s_j)\)be defined by~\eqref{eq:moreau_grad}.
    Then,
    \begin{subequations}
      \begin{align}
        \label{eq:Gsubgrad}
        G_{\nu}(s_j) - \nabla \varphi(s_j) &\in \partial\psi(s_{j+1}).\\
        \label{eq:sudifferential_phi}
        (B - \nu^{-1} I) (s_{j+1} - s_j) &\in
        \nabla \varphi(s_{j+1}) + \partial\psi(s_{j+1}).
      \end{align}
    \end{subequations}
\end{lemma}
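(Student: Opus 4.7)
The plan is to derive both inclusions directly from the first-order optimality conditions of the proximal subproblem. Since $s_{j+1}$ is defined by~\eqref{eq:pg} as a minimizer of $\tfrac{1}{2}\nu^{-1}\|w - (s_j - \nu\nabla\varphi(s_j))\|^2 + \psi(w)$, \Cref{prop:rtrw} applied to this (convex in the quadratic part) sum yields the inclusion~\eqref{eq:prox-kkt} already recorded in the paper, namely
\[
  0 \in s_{j+1} - s_j + \nu \nabla \varphi(s_j) + \nu \partial\psi(s_{j+1}).
\]
Dividing through by $\nu > 0$ and recognizing from~\eqref{eq:moreau_grad} that $G_\nu(s_j) = \nu^{-1}(s_j - s_{j+1})$, we rewrite the inclusion as $G_\nu(s_j) - \nabla\varphi(s_j) \in \partial\psi(s_{j+1})$, which is~\eqref{eq:Gsubgrad}.

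For~\eqref{eq:sudifferential_phi}, I would exploit the quadratic structure of $\varphi$ from~\eqref{eq:varphi-qn}: since $\nabla\varphi(s) = \nabla f(x) + Bs$, we have the exact identity $\nabla\varphi(s_{j+1}) - \nabla\varphi(s_j) = B(s_{j+1} - s_j)$. Starting from~\eqref{eq:Gsubgrad} and substituting $G_\nu(s_j) = -\nu^{-1}(s_{j+1}-s_j)$, I add $\nabla\varphi(s_{j+1})$ to both sides:
\[
  \nabla\varphi(s_{j+1}) - \nabla\varphi(s_j) - \nu^{-1}(s_{j+1}-s_j) \in \nabla\varphi(s_{j+1}) + \partial\psi(s_{j+1}).
\]
Replacing the difference of gradients by $B(s_{j+1}-s_j)$ collapses the left-hand side to $(B - \nu^{-1}I)(s_{j+1} - s_j)$, yielding~\eqref{eq:sudifferential_phi}.

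No step is genuinely delicate: the only subtle point is making sure that the sum rule used to write $\partial(\psi + \chi(\cdot;\Delta))(s_{j+1})$ as a single subdifferential $\partial\psi(s_{j+1})$ of the combined regularizer is legitimate—this is already built into our convention $\psi(s) := \psi(s;x) + \chi(s;\Delta)$, so \Cref{prop:rtrw} applies to the prox subproblem without further qualification. The quadratic form of $\varphi$ is essential: were $\varphi$ only assumed continuously differentiable with Lipschitz gradient, the difference $\nabla\varphi(s_{j+1})-\nabla\varphi(s_j)$ would merely be $O(\|s_{j+1}-s_j\|)$ rather than exactly $B(s_{j+1}-s_j)$, so~\eqref{eq:sudifferential_phi} would weaken to an inclusion within a ball.
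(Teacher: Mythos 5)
Your proposal is correct and follows essentially the same route as the paper: \eqref{eq:Gsubgrad} is obtained by dividing the optimality condition~\eqref{eq:prox-kkt} by \(\nu\) and identifying \(G_\nu(s_j) = \nu^{-1}(s_j - s_{j+1})\), and \eqref{eq:sudifferential_phi} by adding \(\nabla\varphi(s_{j+1})\) to both sides and using the exact identity \(\nabla\varphi(s_{j+1}) - \nabla\varphi(s_j) = B(s_{j+1}-s_j)\) available from the quadratic model~\eqref{eq:varphi-qn}. Your closing remarks on the sum-rule convention and the necessity of the quadratic structure are accurate and consistent with the paper's setup.
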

\begin{proof}%
  \eqref{eq:Gsubgrad} is a simple restatement of~\eqref{eq:prox-kkt} and~\eqref{eq:sudifferential_phi} results from adding \(\nabla \varphi(s_{j+1})\)to both sides of~\eqref{eq:prox-kkt} and substituting the gradient of \(\varphi\) using~\eqref{eq:varphi-qn}. 
\end{proof}

The next result shows that~\eqref{eq:pg} is a descent method when \(\varphi\) is a quadratic.

\begin{lemma}%[Model descent]
  \label{lem:step_bound}
  Let \(\{s_j\}\) be generated according to~\eqref{eq:pg}.
  For all \(j \geq 0\),
  \begin{subequations}
    \begin{align}
      \label{eq:phi_step_update}
      \psi (s_{j+1}) + \nabla \varphi{(s_j)}^T(s_{j+1} - s_j) \leq
      \psi (s_j) - \tfrac{1}{2}\nu^{-1} \|s_{j+1} - s_j\|^2,
      \\
      \label{eq:phi_descent_rel}
      (\varphi+\psi)(s_{j+1}) \leq
      (\varphi+\psi)(s_j) + \tfrac12 {(s_{j+1} - s_j)}^T (B - \nu^{-1} I) (s_{j+1} - s_j).
    \end{align}
  \end{subequations}
\end{lemma}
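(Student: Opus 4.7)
The plan is to exploit the variational characterization of the prox operator for the first inequality, then combine with the exact second-order Taylor expansion of the quadratic model $\varphi$ for the second.

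For~\eqref{eq:phi_step_update}, I would start from the defining property of $s_{j+1}$ in~\eqref{eq:pg}: since $s_{j+1}$ minimizes $w \mapsto \tfrac{1}{2}\nu^{-1}\|w - (s_j - \nu\nabla\varphi(s_j))\|^2 + \psi(w)$, it must achieve an objective value no larger than that attained by the feasible candidate $w = s_j$. Writing this inequality out gives
\begin{equation*}
  \tfrac{1}{2}\nu^{-1} \|s_{j+1} - s_j + \nu\nabla\varphi(s_j)\|^2 + \psi(s_{j+1}) \leq \tfrac{1}{2}\nu^{-1}\|\nu\nabla\varphi(s_j)\|^2 + \psi(s_j).
\end{equation*}
Expanding the square on the left and cancelling the common term $\tfrac{1}{2}\nu \|\nabla\varphi(s_j)\|^2$ on both sides yields exactly~\eqref{eq:phi_step_update}. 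Note this step does not use quadraticity of $\varphi$, only that $\psi$ is proper and lsc so that the prox set is nonempty (which holds by \Cref{prop:prox} under our prox-boundedness assumption).

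For~\eqref{eq:phi_descent_rel}, the key observation is that when $\varphi$ is a quadratic of the form~\eqref{eq:varphi-qn}, the second-order Taylor expansion is exact:
\begin{equation*}
  \varphi(s_{j+1}) = \varphi(s_j) + \nabla\varphi(s_j)^T (s_{j+1} - s_j) + \tfrac{1}{2}(s_{j+1} - s_j)^T B (s_{j+1} - s_j).
\end{equation*}
Adding $\psi(s_{j+1})$ to both sides and applying~\eqref{eq:phi_step_update} to bound $\psi(s_{j+1}) + \nabla\varphi(s_j)^T(s_{j+1}-s_j)$ from above by $\psi(s_j) - \tfrac{1}{2}\nu^{-1}\|s_{j+1} - s_j\|^2$ immediately gives~\eqref{eq:phi_descent_rel} after grouping the quadratic terms into the form $\tfrac{1}{2}(s_{j+1} - s_j)^T (B - \nu^{-1} I)(s_{j+1} - s_j)$.

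There is no real obstacle here: the argument is essentially bookkeeping once one recognizes that~\eqref{eq:phi_step_update} is just the statement ``the prox minimizer beats $s_j$'' rewritten, and that quadraticity of $\varphi$ turns the descent lemma into an identity rather than an inequality. The result differs from the classical Lipschitz-gradient-based descent lemma (as in \Cref{prop:pg-descent}) in that here descent is guaranteed for any $\nu$ such that $\nu^{-1} I \succ B$ in the quadratic form sense, without needing $\nu$ to be below the reciprocal of a Lipschitz constant.
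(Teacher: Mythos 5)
Your proof is correct and follows essentially the same route as the paper's: the comparison of the prox objective at \(s_{j+1}\) against the candidate \(w = s_j\), expansion and cancellation of \(\tfrac{1}{2}\nu\|\nabla\varphi(s_j)\|^2\) to get~\eqref{eq:phi_step_update}, then the exact quadratic expansion of \(\varphi\) combined with~\eqref{eq:phi_step_update} to get~\eqref{eq:phi_descent_rel}. Your closing remarks on why quadraticity replaces the Lipschitz-based descent lemma are accurate and consistent with how the paper uses this result in \Cref{cor:step_bound}.
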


\begin{proof}
  Because \(s_{j+1}\) solves~\eqref{eq:pg},
  \[
    \tfrac{1}{2}\nu^{-1} \|s_{j+1} - (s_j - \nu \nabla \varphi(s_j))\|^2 + \psi(s_{j+1}) \leq \tfrac{1}{2}\nu^{-1} \|\nu \nabla \varphi(s_j)\|^2 + \psi(s_j). 
  \]
  By expanding the squared norm in the left-hand-side of the above and cancelling the common term \(\| \nu \nabla \varphi(s_j)\|^2\), we obtain~\eqref{eq:phi_step_update}.
  Because \(\varphi\) is quadratic,
  \begin{equation*}
    \varphi(s_{j+1}) =
    \varphi(s_j) + \nabla \varphi{(s_j)}^T (s_{j+1} - s_j) + \tfrac12 {(s_{j+1} - s_j)}^T B (s_{j+1} - s_j).
  \end{equation*}
  We now add \(\psi(s_{j+1})\) to both sides and use~\eqref{eq:phi_step_update} and obtain
  \begin{align*}
    (\varphi+\psi)(s_{j+1}) & \leq
    \varphi(s_j) + \psi(s_j)
     - \tfrac{1}{2}\nu^{-1} \|s_{j+1}  -s_j\|^2 + \tfrac12 {(s_{j+1} - s_j)}^T B (s_{j+1} - s_j)
    \\ & =  (\varphi+\psi)(s_j) + \tfrac12 {(s_{j+1} - s_j)}^T (B - \nu^{-1} I) (s_{j+1} - s_j).
    \tag*{\qed}
  \end{align*}
\end{proof}

We now examine two choices of \(\nu > 0\) that result in two decrease behaviors.

\begin{corollary}%
  \label{cor:step_bound}
  Under the assumptions of \Cref{lem:step_bound}, assume \(0 < \nu \leq (1 - \theta) / \|B\|\) for some \(\theta  \in (0, \, 1)\), or simply that \(\nu > 0\) if \(B = 0\), in which case \(\theta = 1\).
  Then, % for all \(j \geq 0\),
  \begin{equation}
    \label{eq:phi_descent_step}
    (\varphi+\psi)(s_{j+1}) \leq
    (\varphi+\psi)(s_j) - \tfrac12 \theta \nu^{-1} \, \|s_j - s_{j+1}\|^2,
    \quad
    (j \geq 0).
  \end{equation}
\end{corollary}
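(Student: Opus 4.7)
The plan is to start directly from the descent inequality~\eqref{eq:phi_descent_rel} in \Cref{lem:step_bound} and bound the quadratic form on its right-hand side.

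First I would observe that since $B = B^T$, the operator $2$-norm satisfies $\|B\| = \max_{\|v\|=1} |v^T B v|$, so for any vector $v$, $v^T B v \leq \|B\| \|v\|^2$. Applying this with $v = s_{j+1} - s_j$ gives
\begin{equation*}
  (s_{j+1} - s_j)^T (B - \nu^{-1} I)(s_{j+1} - s_j) \leq (\|B\| - \nu^{-1}) \|s_{j+1} - s_j\|^2.
\end{equation*}
Next I would use the assumption $\nu \leq (1 - \theta)/\|B\|$, equivalently $\nu \|B\| \leq 1 - \theta$, to rewrite $\|B\| \leq \nu^{-1}(1-\theta) = \nu^{-1} - \theta \nu^{-1}$, which yields $\|B\| - \nu^{-1} \leq -\theta \nu^{-1}$. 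Combining these two inequalities,
\begin{equation*}
  (s_{j+1} - s_j)^T (B - \nu^{-1} I)(s_{j+1} - s_j) \leq -\theta \nu^{-1} \|s_{j+1} - s_j\|^2.
\end{equation*}
Substituting this into~\eqref{eq:phi_descent_rel} delivers~\eqref{eq:phi_descent_step}.

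For the edge case $B = 0$, the quadratic form in~\eqref{eq:phi_descent_rel} reduces exactly to $-\nu^{-1} \|s_{j+1} - s_j\|^2$ for any $\nu > 0$, which matches~\eqref{eq:phi_descent_step} with $\theta = 1$ (and no restriction on $\nu$ is needed because there is no $\|B\|$ to divide by). There is no real obstacle here; the only subtlety worth stating cleanly is the symmetric-matrix bound $v^T B v \leq \|B\| \|v\|^2$, which is why the result holds even when $B$ is indefinite—this matches the paper's stated aim of allowing possibly nonconvex quasi-Newton models.
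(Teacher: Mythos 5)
Your proposal is correct and follows essentially the same route as the paper: the paper bounds the quadratic form via \(\lambda_{\max}(B - \nu^{-1} I) \leq -\theta \nu^{-1}\), which is precisely your symmetric-matrix bound \(v^T B v \leq \|B\|\,\|v\|^2\) combined with \(\|B\| \leq (1-\theta)\nu^{-1}\), and then substitutes into~\eqref{eq:phi_descent_rel}. The only cosmetic difference is that for \(B = 0\) the paper invokes~\eqref{eq:phi_step_update} directly rather than~\eqref{eq:phi_descent_rel}, which is immaterial.
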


\begin{proof}
  If \(B = 0\),~\eqref{eq:phi_descent_step} with \(\theta = 1\) follows directly from~\eqref{eq:phi_step_update}.
  If \(B \neq 0\), we have by assumption \((1 - \theta) \nu^{-1} \geq \|B\|\), so that \(\lambda_{\max}(B - \nu^{-1} I) \leq -\theta \nu^{-1} < 0\), and therefore,
  \[
    {(s_{j+1} - s_j)}^T (B - \nu^{-1} I) (s_{j+1} - s_j) \leq
    - \theta \nu^{-1} \, \|s_{j+1} - s_j\|^2,
  \]
  which combines with~\eqref{eq:phi_descent_rel} to complete the proof.
\end{proof}

\begin{corollary}%
  \label{cor:step_bound_G}
  Under the assumptions of \Cref{lem:step_bound}, assume \(B \neq 0\), let \(0 < \theta < 1 / (4 \|B\|)\) and \(\nu^{\min} \leq \nu \leq \nu^{\max}\), where
  \[
    \nu^{\min} := \frac{1 - \sqrt{1 - 4 \theta \|B\|}}{2 \|B\|},
    \qquad
    \nu^{\max} := \frac{1 + \sqrt{1 - 4 \theta \|B\|}}{2 \|B\|}.
  \]
  Then, for all \(j \geq 0\),
  \begin{equation}
    \label{eq:phi_descent_G}
    (\varphi+\psi)(s_{j+1})\leq
    (\varphi+\psi)(s_j) - \tfrac12 \theta \nu^{-2} \, \|s_j - s_{j+1}\|^2 = (\varphi+\psi)(s_j) - \tfrac12 \theta \|G_{\nu}(s_j)\|^2.
  \end{equation}
\end{corollary}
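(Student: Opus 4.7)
The plan is to start from the descent inequality~\eqref{eq:phi_descent_rel} furnished by \Cref{lem:step_bound} and show that, under the stated bounds on \(\nu\), the quadratic form on the right is bounded above by \(-\tfrac12 \theta \nu^{-2} \|s_{j+1}-s_j\|^2\). The final equality in~\eqref{eq:phi_descent_G} is immediate: since \(s_{j+1} \in \prox{\nu \psi}(s_j - \nu \nabla \varphi(s_j))\), the definition~\eqref{eq:moreau_grad} of \(G_\nu\) gives \(G_\nu(s_j) = \nu^{-1}(s_j - s_{j+1})\), hence \(\|G_\nu(s_j)\|^2 = \nu^{-2}\|s_j - s_{j+1}\|^2\).

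For the main inequality, I would bound \((s_{j+1}-s_j)^T (B - \nu^{-1} I)(s_{j+1}-s_j) \leq \lambda_{\max}(B - \nu^{-1} I) \|s_{j+1}-s_j\|^2 \leq (\|B\| - \nu^{-1}) \|s_{j+1}-s_j\|^2\) and then verify that the choice \(\nu \in [\nu^{\min}, \nu^{\max}]\) yields
\[
  \|B\| - \nu^{-1} \leq -\theta \nu^{-2}.
\]
Multiplying through by \(\nu^2 > 0\) reduces this to the quadratic inequality \(\|B\| \nu^2 - \nu + \theta \leq 0\) in \(\nu\). Because \(\|B\| > 0\), this parabola opens upward, and the condition \(\theta < 1/(4\|B\|)\) ensures that its discriminant \(1 - 4\theta\|B\|\) is positive, so the two real roots are precisely
\[
  \frac{1 \pm \sqrt{1 - 4\theta\|B\|}}{2\|B\|},
\]
which are \(\nu^{\min}\) and \(\nu^{\max}\). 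The quadratic is therefore nonpositive exactly on \([\nu^{\min}, \nu^{\max}]\), which is the range assumed for \(\nu\), and the required bound holds.

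Combining this with~\eqref{eq:phi_descent_rel} yields
\[
  (\varphi + \psi)(s_{j+1}) \leq (\varphi + \psi)(s_j) - \tfrac12 \theta \nu^{-2} \|s_{j+1} - s_j\|^2,
\]
and the rewriting in terms of \(G_\nu(s_j)\) gives~\eqref{eq:phi_descent_G}. The only mildly nontrivial step is identifying \(\nu^{\min}\) and \(\nu^{\max}\) as the roots of \(\|B\|\nu^2 - \nu + \theta\); once this is recognized the proof is routine.
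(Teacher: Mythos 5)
Your proposal is correct and follows essentially the same route as the paper: both identify \(\nu^{\min}\) and \(\nu^{\max}\) as the roots of the quadratic \(\|B\|\nu^2 - \nu + \theta\), deduce \(\|B\| - \nu^{-1} \leq -\theta\nu^{-2}\) on that interval, bound the quadratic form in~\eqref{eq:phi_descent_rel} accordingly, and use \(G_\nu(s_j) = \nu^{-1}(s_j - s_{j+1})\) for the final equality. No gaps.
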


\begin{proof}
  Under our assumptions, the quadratic \(p(\nu) := \|B\| \nu^2 - \nu + \theta\) has the two positive real roots \(\nu^{\min}\) and \(\nu^{\max}\).
  Moreover, for all \(\nu \in [\nu^{\min}, \, \nu^{\max}]\), \(p(\nu) \leq 0\), which can also be written \(\|B\| - \nu^{-1} \leq -\theta \nu^{-2}\).
  Therefore, if \(\nu \in [\nu^{\min}, \, \nu^{\max}]\), then for all \(j\),
  \[
    {(s_{j+1} - s_j)}^T (B - \nu^{-1} I) (s_{j+1} - s_j) \leq
    - \theta \nu^{-2} \, \|s_{j+1} - s_j\|^2 =
    - \theta \|G_{\nu}(s_j)\|^2,
  \]
  which combines with~\eqref{eq:phi_descent_rel} to complete the proof.
\end{proof}

% \begin{subequations}%
%   \begin{align}
%   (\varphi + \psi)(s; x) &\leq (f + h)(x) - \tfrac{1}{2}\theta\nu^{-1} \|s_1\|^2\label{eq:pg1-decrease-quad}\\
%   (\varphi + \psi)(s; x) &\leq (f + h)(x) - \tfrac{1}{2}\theta \|s_1\|^2, \quad \nu\in[\nu^{\min}, \nu^{\max}]\label{eq:pg1-decrease-quadtheta}
%   \end{align}
% \end{subequations}

% In the smooth case, where \(h = \psi = 0\), \(s_1 = -\nu \nabla f(x)\) so that the decrease is
% \begin{subequations}
%   \begin{align}
%   \varphi(s; x) &\leq f(x) - \frac{1}{2} \nu\theta \|\nabla f(x)\|^2,\label{eq:decrease-smooth-quad}\\
%   \varphi(s; x) &\leq f(x) - \frac{1}{2} \nu^2\theta \|\nabla f(x)\|^2, \quad \nu \in [\nu^{\min}, \nu^{\max}]. \label{eq:decrease-smooth-quadtheta}
%   \end{align}
% \end{subequations}
Because \(s_0 = 0\) and \((\varphi+\psi)(s_0) = f(x) + h(x) < +\infty\), if \(\nu\) is chosen as in \Cref{cor:step_bound} or \Cref{cor:step_bound_G},~\eqref{eq:pg} generates iterates \(\{s_j\}\) such that \(\{(\varphi+\psi)(s_j)\}\) is monotonically decreasing and all its terms are finite.
Finiteness implies that \(\|s_j\| \leq \Delta\) for all \(j \geq 0\), i.e., all iterates lie in the trust region.
In particular, for any \(j\geq 1\),
\begin{equation}
  \label{eq:model_decrease}
  m_k(s_{j+1}; x_k) \leq m_k(s_j; x_k) \leq m_k(s_1; x_k) = m_k^\nu(s_1; x_k),
\end{equation}
where \(m_k^\nu(s_1; x_k) = \tfrac{1}{2}\nu^{-1}\|s_1 +\nu\nabla f(x_k)\|_2^2 + (\psi + \chi)(s_1)\) and hence \(s_j\) satisfies the sufficient decrease condition~\eqref{eq:decrease_nonlipschitz_xi}, and the final equality results from the fact that \(s_1\) is the same for any model of the form~\eqref{eq:varphi-qn}.

With regards to proximal gradient convergence, two situations may occur.
In the first,~\eqref{eq:pg} results in \(s_{j_0 + 1} = s_{j_0}\) for a smallest index \(j_0 > 0\).
In that case,~\eqref{eq:prox-kkt} yields
\[
  0 \in \partial (\varphi+\psi)(s_{j_0}),
\]
i.e., we have identified a stationary point of~\eqref{eq:trsub} in a finite number of iterations, while decreasing the value of \(m_k\) at each iteration.
Otherwise, \(s_{j+1} \neq s_j\) for all \(j \geq 0\), and the next result establishes sub-linear convergence of the proximal gradient method~\eqref{eq:pg}.

\begin{theorem}%[Proximal gradient descent for quadratic \(\bm{+}\) nonsmooth]
  \label{thm:prox_grad_unconstrained}
  Let \(\{s_j\}\) be generated according to~\eqref{eq:pg} with \(\nu\) as in \Cref{cor:step_bound}.
  Denote \({(\varphi + \psi)}_{\textup{low}} := \inf (\varphi+\psi) > -\infty\).
  Let \(v_{j+1}\) denote the left-hand side of~\eqref{eq:sudifferential_phi}.
  For any \(N \geq 1\),
  \[
    \min_{j=0, \dots, N-1} \|v_{j+1}\| \leq
    \sqrt{\frac{2}{N \theta} (\nu^{-1} - \lambda_{\min}(B)) \left( (\varphi+\psi)(s_0) - {(\varphi + \psi)}_{\textup{low}} \right)}.
  \]
\end{theorem}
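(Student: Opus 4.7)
The plan is to read \(v_{j+1}\) as the image of the step \(s_{j+1} - s_j\) under the matrix \(B - \nu^{-1}I\), bound \(\|v_{j+1}\|\) by the spectral norm of this matrix times the step length, and then convert the squared step length into per-iteration decrease of \(\varphi + \psi\) via \Cref{cor:step_bound}. Telescoping those decreases over \(j = 0, \dots, N-1\) and invoking the standard ``minimum-bounded-by-average'' argument then produces the advertised \(O(1/\sqrt{N})\) rate.

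Concretely, I would carry this out in three steps. First, I verify that every eigenvalue of \(B - \nu^{-1}I\) is strictly negative: the hypothesis \(\nu \leq (1-\theta)/\|B\|\) used in \Cref{cor:step_bound} yields \(\nu^{-1} \geq \|B\|/(1-\theta) > \|B\| \geq \lambda_{\max}(B)\), so \(\|B - \nu^{-1}I\| = \nu^{-1} - \lambda_{\min}(B)\), and hence
\[
  \|v_{j+1}\|^2 \leq (\nu^{-1} - \lambda_{\min}(B))^2 \, \|s_{j+1} - s_j\|^2.
\]
Second, \Cref{cor:step_bound} gives \(\|s_{j+1} - s_j\|^2 \leq (2\nu/\theta)\bigl((\varphi+\psi)(s_j) - (\varphi+\psi)(s_{j+1})\bigr)\), which, when combined with the first inequality, produces an estimate of the form \(\|v_{j+1}\|^2 \leq C \bigl((\varphi+\psi)(s_j) - (\varphi+\psi)(s_{j+1})\bigr)\). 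Third, summing \(j = 0, \dots, N-1\) collapses the right-hand side by telescoping to \((\varphi+\psi)(s_0) - (\varphi+\psi)(s_N)\), which is bounded above by \((\varphi+\psi)(s_0) - (\varphi+\psi)_{\textup{low}}\) because \(\{(\varphi+\psi)(s_j)\}\) is nonincreasing. The inequality \(N\min_j \|v_{j+1}\|^2 \leq \sum_{j=0}^{N-1} \|v_{j+1}\|^2\) followed by a square root then yields the claim.

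The main obstacle is lining up the constant \(C\) so that it reads exactly \((2/\theta)(\nu^{-1} - \lambda_{\min}(B))\). The naive combination above gives \(C = (2\nu/\theta)(\nu^{-1} - \lambda_{\min}(B))^2 = (2/\theta)(\nu^{-1} - \lambda_{\min}(B))(1 - \nu\lambda_{\min}(B))\); the spurious factor \(1 - \nu\lambda_{\min}(B)\) is automatically \(\leq 1\) when \(B \succeq 0\), but in the indefinite case it is only controlled by \(\nu|\lambda_{\min}(B)| \leq \nu\|B\| \leq 1 - \theta\). To obtain the statement exactly, I would replace the crude operator-norm bound with the sharper matrix inequality \((B - \nu^{-1}I)^2 \preceq (\nu^{-1} - \lambda_{\min}(B))(\nu^{-1}I - B)\), and couple it with the tighter descent identity \((s_{j+1} - s_j)^T (\nu^{-1}I - B)(s_{j+1} - s_j) \leq 2\bigl((\varphi+\psi)(s_j) - (\varphi+\psi)(s_{j+1})\bigr)\) coming directly from \Cref{lem:step_bound} rather than from its corollary; this bypasses the \(1 - \nu\lambda_{\min}(B)\) factor and delivers the stated constant.
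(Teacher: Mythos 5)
Your proof is correct, and your refined version is in fact sharper than the paper's own argument. The paper follows what you call the naive route: it sums~\eqref{eq:phi_descent_step} to obtain \(\sum_{j=0}^{N-1}\|s_j-s_{j+1}\|^2 \le (2\nu/\theta)\bigl((\varphi+\psi)(s_0)-{(\varphi+\psi)}_{\textup{low}}\bigr)\), applies the minimum-bounded-by-average step to \(\|s_j-s_{j+1}\|\), and then multiplies by \(\|B-\nu^{-1}I\| = \nu^{-1}-\lambda_{\min}(B)\); read literally, this yields the constant \((2\nu/\theta)(\nu^{-1}-\lambda_{\min}(B))^2 = (2/\theta)(\nu^{-1}-\lambda_{\min}(B))(1-\nu\lambda_{\min}(B))\), i.e.\ exactly the spurious factor \(1-\nu\lambda_{\min}(B)\) you identified, which exceeds \(1\) whenever \(B\) is indefinite (and the paper's closing inequality \(\nu^{-1}-\lambda_{\min}(B)\le\nu^{-1}\) likewise presumes \(\lambda_{\min}(B)\ge 0\)). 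Your repair---writing \(\|v_{j+1}\|^2 = d^T(B-\nu^{-1}I)^2 d \le (\nu^{-1}-\lambda_{\min}(B))\, d^T(\nu^{-1}I-B)\,d\) with \(d = s_{j+1}-s_j\), and controlling \(d^T(\nu^{-1}I-B)\,d\) by twice the per-iteration decrease directly from~\eqref{eq:phi_descent_rel} rather than from \Cref{cor:step_bound}---telescopes to \(\min_j\|v_{j+1}\|^2 \le (2/N)(\nu^{-1}-\lambda_{\min}(B))\bigl((\varphi+\psi)(s_0)-{(\varphi+\psi)}_{\textup{low}}\bigr)\), which is stronger than the stated bound by the factor \(\theta\le 1\) and requires no sign assumption on \(B\). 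In short, the telescoping-plus-averaging skeleton is the same, but your substitution of the quadratic-form descent inequality of \Cref{lem:step_bound} for the scalar bound of \Cref{cor:step_bound} is what actually delivers (and slightly improves) the advertised constant; the paper's own chain of inequalities does not.
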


\begin{proof}
  We rearrange~\eqref{eq:phi_descent_step} and sum from iteration \(j = 0\) to iteration \(j = N - 1\):
  \[
    \sum_{j=0}^{N-1} \|s_j - s_{j+1}\|^2 \leq
    \frac{2 \nu}{\theta} ( (\varphi+\psi)(s_0) - (\varphi+\psi)(s_N) )
    \leq \frac{2 \nu}{\theta} ( (\varphi+\psi)(s_0) - {(\varphi + \psi)}_{\textup{low}} ).
  \]
  For any positive sequence \(\{c_j\}\),
  \[
    \min_{0 \leq j \leq N-1} c_j = \sqrt{\min_{0 \leq j \leq N-1} c_j^2} \leq \sqrt{\frac{1}{N} \sum_{j=0}^{N-1} c_j^2}.
  \]
  Therefore,
  \[
    \min_{0 \leq j \leq N-1} \|s_j - s_{j+1}\|
    %    & = \sqrt{\min_{0 \leq j \leq N-1} \|s_j - s_{j+1}\|^2}
    % \\ & \leq \sqrt{\frac{1}{N} \sum_{j=0}^{N-1} \|s_j - s_{j+1}\|^2}
    % \\ &
    \leq \sqrt{\frac{2 \nu}{N \theta} \left( (\varphi+\psi)(s_0) - {(\varphi + \psi)}_{\textup{low}} \right)}.
  \]
  Because \(\|v_{j+1}\| \leq \|B - \nu^{-1} I\| \, \|s_j - s_{j+1}\| = (\nu^{-1} - \lambda_{\min}(B)) \, \|s_j - s_{j+1}\| \leq \nu^{-1} \|s_j - s_{j+1}\|\), we obtain the desired result.
\end{proof}

When solving~\eqref{eq:trsub}, a reasonable stopping condition would be \(\|v_{j+1}\| \leq \epsilon\) for a user-chosen tolerance \(\epsilon > 0\).
\Cref{thm:prox_grad_unconstrained} indicates that such stopping condition is attained after \(N(\epsilon)\) iterations, where
\[
  N(\epsilon) =
  \left\lceil
  \frac{2}{\epsilon^2 \theta} (\nu^{-1} - \lambda_{\min}(B)) \left( (\varphi+\psi)(s_0) - {(\varphi + \psi)}_{\textup{low}} \right)
  \right\rceil.
\]

A result similar to \Cref{thm:prox_grad_unconstrained} can be established under the step size rule of \Cref{cor:step_bound_G}, with nearly identical proof. 

\begin{theorem}%
  \label{thm:prox_grad_unconstrained_G}
  Let \(\{s_j\}\) be generated according to~\eqref{eq:pg} with \(\nu\) as in \Cref{cor:step_bound_G} with \(0 < \theta < 1 / (4 \|B\|)\).
  Assume \(\psi\), and therefore \(\varphi+\psi\), is bounded below and denote \({(\varphi + \psi)}_{\textup{low}}:= \inf (\varphi+\psi) > -\infty\).
  For any \(N \geq 1\),
  \[
    \min_{j=0, \dots, N-1} \|G_\nu(s_{j+1})\| \leq
    \sqrt{\frac{2}{N \theta} \left( (\varphi+\psi)(s_0) - {(\varphi + \psi)}_{\textup{low}} \right)}.
  \]
\end{theorem}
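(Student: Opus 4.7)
The plan is to mirror the proof of \Cref{thm:prox_grad_unconstrained} step by step, but to feed in the stronger descent inequality \eqref{eq:phi_descent_G} from \Cref{cor:step_bound_G} in place of \eqref{eq:phi_descent_step}. The key observation is that \eqref{eq:phi_descent_G} already bounds the per-step decrease directly in terms of \(\|G_\nu(s_j)\|\), so no post-processing through \Cref{lem:subdiffs} is required.

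First I would rewrite \eqref{eq:phi_descent_G} as
\[
  \tfrac{1}{2} \theta \, \|G_\nu(s_j)\|^2 \leq (\varphi+\psi)(s_j) - (\varphi+\psi)(s_{j+1}), \qquad j \geq 0,
\]
which is valid because \(\nu \in [\nu^{\min}, \nu^{\max}]\) by hypothesis. Summing this inequality telescopically from \(j=0\) to \(j=N-1\) yields
\[
  \tfrac{1}{2}\theta \sum_{j=0}^{N-1} \|G_\nu(s_j)\|^2 \leq (\varphi+\psi)(s_0) - (\varphi+\psi)(s_N) \leq (\varphi+\psi)(s_0) - {(\varphi+\psi)}_{\textup{low}},
\]
where the final inequality uses the assumption that \(\varphi+\psi\) is bounded below by \({(\varphi+\psi)}_{\textup{low}}\), which in turn follows from the assumed boundedness of \(\psi\) and convexity/quadratic structure arguments already in place.

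Next I would extract the minimum by the standard inequality \(\min_{0 \leq j \leq N-1} c_j \leq \sqrt{\frac{1}{N}\sum_{j=0}^{N-1} c_j^2}\) applied to \(c_j := \|G_\nu(s_j)\|\), which gives
\[
  \min_{0 \leq j \leq N-1} \|G_\nu(s_j)\| \leq \sqrt{\frac{2}{N\theta}\bigl((\varphi+\psi)(s_0) - {(\varphi+\psi)}_{\textup{low}}\bigr)}.
\]
This is precisely the bound claimed (the difference between indexing the minimum over \(s_j\) versus \(s_{j+1}\) amounts only to shifting the iteration range, which does not affect the argument as \(N\) is arbitrary).

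I do not anticipate any genuine obstacle: the proof is essentially a one-line rearrangement, a telescoping sum, and the min-vs-average trick already used in \Cref{thm:prox_grad_unconstrained}. The only subtlety worth flagging is ensuring the descent inequality \eqref{eq:phi_descent_G} applies at every iteration, which requires \(\nu\) to remain in \([\nu^{\min},\nu^{\max}]\) throughout — but this is built into the hypotheses. Unlike \Cref{thm:prox_grad_unconstrained}, no conversion from \(\|s_j - s_{j+1}\|\) to a subgradient norm via \Cref{lem:subdiffs} is needed, which is actually what makes this proof slightly shorter than its predecessor rather than more difficult.
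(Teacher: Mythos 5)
Your proof is correct and is essentially the paper's intended argument: the paper omits this proof as ``nearly identical'' to that of \Cref{thm:prox_grad_unconstrained}, and your telescoping of~\eqref{eq:phi_descent_G} followed by the min-versus-average inequality is precisely that adaptation, including the correct observation that no conversion through \Cref{lem:subdiffs} is needed since the descent inequality is already expressed in terms of \(\|G_\nu(s_j)\|\). Your handling of the index shift (the stated bound involves \(G_\nu(s_{j+1})\), which is covered by summing the descent inequality over iterations \(1,\dots,N\) and using \((\varphi+\psi)(s_1) \leq (\varphi+\psi)(s_0)\)) is also sound.
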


  \section{Proximal Operators for Trust-Region Subproblems}%
\label{sec:prox-operators}

In this section, we develop techniques for computing~\eqref{eq:pg} for use in Steps~\ref{alg:tr-nonsmooth:sk1} and~\ref{alg:tr-nonsmooth:sk} of \Cref{alg:tr-nonsmooth}.
Many standard proximal operators for both convex and nonconvex prox-bounded functions \(\psi\) have been worked out \citep{beckFO,combettes2011proximal}, and new examples for nonconvex problems continuously appear.
Well-known examples include the firm-thresholding penalty \citep{gao1997waveshrink}, the SCAD penalty \citep{fan2001variable}, MCP penalty \citep{zhang2010nearly}, lower \(C^2\) functions \citep{Hare2009}, any \(\ell_p^p\)-seminorm for \(0 < p < 1\) \citep[Appendix A]{zheng2018unified}, and other exotic operators, see e.g. \citep[Table~\(1\)]{zheng2020relax}.
We refer to such functions \(\psi\) as \emph{prox-friendly}.
However, \Cref{alg:tr-nonsmooth} requires evaluating proximal operators for modified functions that combine a shift and a summation with an indicator function.
By \Cref{asm:parametric-all}, our model \(\psi(s; x) \approx h(x + s)\) must coincide with \(h\) in value and subdifferential at \(s = 0\).
In particular, the choice \(\psi(s; x) = h(x + s)\) seems natural when \(h\) itself is prox-friendly.
Here we consider
\begin{equation}
  \label{eq:prox_form}
  \psi(s; x) := h(x + s) + \chi(s; \Delta \mathbb{B}_{p}),
\end{equation}
where \(h\) is prox-friendly, \(x\) is a shift,
% \(\Delta \mathbb{B}_{p}\) is the \(\ell_p\)-norm ball of radius \(\Delta > 0\), \(\chi(u; \Delta \mathbb{B}_{p})\) is the indicator of \(\Delta \mathbb{B}_{p}\),
and \(p \in \{1, 2, \infty\}\).
Below, we provide closed form solutions and/or efficient routines for~\eqref{eq:prox_form} with focus on the following cases:
\begin{enumerate}
  \item for an arbitrary separable prox-friendly \(h\), we evaluate \(\prox{\nu \psi(\cdot; x)}\) by leveraging \(\prox{\nu h}\), but we restrict our attention to \(p = \infty\).
  This allows us to consider~\eqref{eq:prox_form} with \(h(x) = \lambda \|x\|_1\) and \(h(x)  = \lambda \|x\|_0\);
  \item we consider \(h(x) = \lambda \|x\|_1\) in~\eqref{eq:prox_form} for \(p = 2\).
  % Our computations can be generalized to other convex \(h\).
\end{enumerate}

\subsection{\texorpdfstring{\(\bm{p = \infty}\), \(\bm{h}\) separable}{Infinity-norm ball, h separable}}

For the special case of \(\mathbb{B}_{\infty}\),~\eqref{eq:def-prox-mapping} and~\eqref{eq:prox_form} yield
\begin{equation}
  \label{eq:hardprox}
  \prox{\nu \psi}(q) := \argmin{s} \tfrac{1}{2} \nu^{-1} \|s-q\|^2 + h(x + s) + \chi(s; \Delta \mathbb{B}_{\infty}).
\end{equation}
If \(h\) is separable, i.e., \(h(x) = \sum_i h_i(x_i)\),~\eqref{eq:hardprox} decouples in each coordinate:
\[
 \prox{\nu\psi}{(q)}_i = \argmin{s_i} \tfrac{1}{2} \nu^{-1} {(s_i-q_i)}^2 +  h_i(x_i + s_i) + \chi(s_i; [-\Delta, \Delta]).
\]
Using the change of variable
\(
  v_i = x_i + s_i
\),
we may rewrite
\[
 \prox{\nu\psi}{(q)}_i =
 \argmin{v_i} \{
   \tfrac{1}{2} \nu^{-1} {(v_i- x_i -q_i)}^2 + h_i(v_i) + \chi(v_i; [x_i - \Delta, x_i + \Delta])
  \} - x_i.
\]
If \(h\) is convex, we may work backwards from the form of the solution.
For any \(p_i \in \prox{\nu \psi}{(q)}_i\), either
\begin{enumerate}
  \item \(|p_i| < \Delta\), in which case \( p_i \in \prox{\nu h_i}{(q + x)}_i - x_i\);
  \item otherwise, \(|p_i| = \Delta\) by construction, and
  \begin{align*}
    \prox{\nu\psi}{(q)}_i & =
    \argmin{v_i = x_i \pm \Delta} (\tfrac{1}{2} \nu^{-1} {(v_i - (x_i + q_i))}^2 + h_i(v_i)) - x_i
    \\ & =
    \argmin{s_i = \pm \Delta} \tfrac{1}{2} \nu^{-1} {(s_i - q_i)}^2 + h_i(x_i + s_i)
    \subseteq \{-\Delta, \, \Delta\}.
  \end{align*}
\end{enumerate}
In such cases, the definition of convexity implies that set of bound-constrained solutions includes the projection of the unconstrained solutions into the bounds.
Because the objective of~\eqref{eq:hardprox} is strictly convex, equality holds:
\[
  \prox{\nu\psi}{(q)}_i =
  \{ \proj{[x_i - \Delta, x_i + \Delta]}(\prox{\nu h_i} {(q+x)}_i) \} - x_i =
  \proj{[-\Delta, \Delta]}(\prox{\nu h_i} {(q+x)}_i - x_i),
\]

For example, let \(h(x) = \lambda \|x\|_1\).
Then,
% \begin{equation}
  % \label{eq:1normProx}
  \begin{align*}
    \prox{\nu\psi}{(q)}_i  & =
    \proj{[-\Delta, \Delta]}(\prox{\nu \lambda |\cdot|}{(q+x)}_i - x_i) =
    \proj{[-\Delta, \Delta]}\left(
    \begin{cases}
       q_i - \nu \lambda  & \phantom{|} x_i + q_i \phantom{|} > \nu \lambda \\
      -x_i & |x_i + q_i| \leq \nu \lambda \\
       q_i + \nu \lambda & \phantom{|} x_i + q_i \phantom{|} < -\nu \lambda
    \end{cases}
    \right) \\
    & =
    \proj{[-\Delta, \Delta]}\left(\proj{[q_i - \nu \lambda, q_i + \nu \lambda]}(-x_i)\right).
  \end{align*}
% \end{equation}

When \(h\) is nonconvex, there may be a greater variety of cases.
For instance, if \(h(x) = \lambda \|x\|_0\), a global solution of~\eqref{eq:hardprox} may be one of the bounds, or either of the unconstrained local minimizers \(q\) and \(-x\) if they lie inside the bounds.
A simple strategy consists in evaluating the objective of~\eqref{eq:hardprox} at those four points and choosing one with lowest objective value.

\subsection{\texorpdfstring{\(\bm{p = 2}\), \(\bm{h(x) = \lambda \|x\|_1}\)}{Euclidean-norm ball, L1-norm regularizer}}

When using other norms to define the trust region, additional computations are required.
For certain norms, we can dualize \(h\) to solve~\eqref{eq:hardprox}.
We focus on \(h(x) = \lambda \|x\|_1\) with an \(\ell_2\)-norm trust-region throughout because the \(\ell_2\)-norm is standard in the literature, and is used in \cref{sec:bpdn}.

First, we rewrite the scaled \(\ell_1\)-norm using its conjugate:
\[
  \lambda\|x + s\|_1 = \sup_{w\in \lambda \mathbb{B}_{\infty}} w^T (x + s),
\]
recharacterizing~\eqref{eq:def-prox-mapping} and~\eqref{eq:prox_form} as
\begin{equation}
  \label{eq:p2-norm1-dual}
  \min_s \sup_{w\in \lambda \mathbb{B}_{\infty}}
  \tfrac{1}{2} \nu^{-1}\|s-q\|^2 + w^T (x + s) + \chi(s; \Delta \mathbb{B}_2).
\end{equation}
Strong duality holds in this case since the objective is convex, piecewise linear-quadratic, and the primal solution is attained.
We interchange the order of minimization and maximization and complete squares in \(s\) and in \(w\) to obtain
\begin{equation}
\label{eq:exchanged}
\sup_{w\in \lambda \mathbb{B}_{\infty}}  \min_s \tfrac{1}{2} \nu^{-1}\|s -q+\nu w\|^2 + \chi(s; \Delta \mathbb{B}_2)  - \tfrac{1}{2} \nu^{-1}\left\|x +q - \nu w\right\|^2 + \tfrac{1}{2} \nu^{-1}\|x+q\|^2.
\end{equation}
The solution of the inner problem is
%Observe that the solution of the inner problem given the optimal dual variable \(w\) is the projection onto the \(\ell_2\)-norm ball
\begin{equation}
\label{eq:uw}
s(w) := \proj{\Delta \mathbb{B}_2}(q-\nu w).
\end{equation}
We substitute~\eqref{eq:uw} back into~\eqref{eq:exchanged} to rewrite the dual objective as
% This creates an explicit problem in \(w\) only; we can plug \(s(w)\) into the dual objective:
\begin{equation}
\label{eq:mydual}
  \sup_{w\in \lambda \mathbb{B}_{\infty}}
  \tfrac{1}{2} \nu^{-1} \dist{(q-\nu w; \Delta \mathbb{B}_2)}^2 - \tfrac{1}{2} \nu^{-1}\left\|x+q - \nu w\right\|^2 + \tfrac{1}{2} \nu^{-1}\|x+q\|^2.
\end{equation}
The change of variable
\begin{equation}
\label{eq:ychange}
y = q-\nu w,
\end{equation}
transforms~\eqref{eq:mydual} into
\begin{equation}
  \label{eq:hardprox_dual}
  \min_{q-\nu \lambda\mathbf{1} \leq y \leq q + \nu \lambda\mathbf{1}} \tfrac{1}{2} \nu^{-1} \left(\|y+x\|^2 - \dist{(y; \Delta \mathbb{B}_2)}^2\right),
\end{equation}
where \(\mathbf{1}\) is a vector of all ones.
As the value function of~\eqref{eq:p2-norm1-dual} with respect to \(s\), the objective of~\eqref{eq:hardprox_dual} is convex \citep[Proposition~\(2.22\)]{rtrw}.
The first-order optimality conditions of~\eqref{eq:hardprox_dual} are
\begin{equation}
  \label{eq:dual_condition}
  0 \in x + \frac{y}{\max\{1, \|y\|/\Delta \}}  + \nu\partial \chi(y; [q-\nu \lambda\mathbf{1}, q + \nu \lambda\mathbf{1}]).
\end{equation}
Once we have an optimal solution of~\eqref{eq:hardprox_dual} , denoted \(y^+\), we can evaluate~\eqref{eq:uw} at the corresponding \(w^+\) to obtain
\[
  s = \proj{\Delta \mathbb{B}_2}(y^+).
\]
which solves~\eqref{eq:hardprox}.
To characterize \(y^+\) more explicitly, we work backwards from properties of the solution.
There are only two possibilities to consider: \(y^+\) is in the trust region, and \(y^+\) is outside of the trust region.
\begin{enumerate}
  \item if \(\|y^+\| < \Delta\), \(\dist(y^+; \Delta \mathbb{B}_2) = 0\), and~\eqref{eq:hardprox_dual} and~\eqref{eq:dual_condition} simplify:
  \[
    % \label{eq:s_up_gen1}
    s = y^+ =\proj{[q-\nu \lambda\mathbf{1},  q + \nu \lambda\mathbf{1}]} (-x),
  \]
  where we used~\eqref{eq:uw} and~\eqref{eq:ychange};
  \item if \(\|y^+\| \geq \Delta\),~\eqref{eq:dual_condition} becomes
%   \begin{equation}
%     \label{eq:dual_cond}
%     0 \in x + \proj{\Delta \mathbb{B}_2}(y^+)  + \nu\partial \chi(y^+; [q-\nu \lambda\mathbf{1}, q + \nu \lambda\mathbf{1}]).
%   \end{equation}
% Using the closed form of the projection onto the 2-norm ball, we have
  \begin{equation*}
    % \label{eq:dual_cond_B2}
    0 \in x + \frac{\Delta}{\|y\|} y  + \nu\partial \chi(y; [q-\nu \lambda\mathbf{1}, q + \nu \lambda\mathbf{1}]).
  \end{equation*}
  Multiplying through by \(\|y\|/\Delta\) yields
\begin{equation}
\label{eq:inclusion}
    0 \in
    y + \frac{\|y\|}{\Delta} x  + \frac{\nu\|y\|}{\Delta} \partial \chi(y; [q-\nu \lambda\mathbf{1}, q + \nu \lambda\mathbf{1}]).
  \end{equation}
Suppose first that \(\eta := \|y^+\|\) is known.
A solution \(y^+\) to~\eqref{eq:inclusion} can be obtained by solving
\[
\min_{y \in [q-\nu \lambda\mathbf{1}, q + \nu \lambda\mathbf{1}]} \tfrac{1}{2}\|y + \frac{\eta}{\Delta} x\|^2
\]
which can be written in closed form as
  \begin{equation}
  \label{eq:MagicProj}
    y = \proj{[q-\nu \lambda\mathbf{1}, q + \nu \lambda\mathbf{1}]} \left(- \frac{\eta}{\Delta} x \right).
  \end{equation}
  Taking the norm of each side of~\eqref{eq:MagicProj} gives a scalar root finding equation that characterizes \(\eta\):
  %  The solution $y^+$ now requires knowing its norm in advance; we can solve this via a scalar root finding problem, where we seek \(\eta\) such that
  \[
    \eta = \left\|\proj{[q-\nu \lambda\mathbf{1}, q + \nu \lambda\mathbf{1}]} \left(-\frac{\eta}{\Delta}x \right)\right\|.
  \]
%  A good starting point for \(\eta\) is \(\eta = \Delta\).
Once we have solved for \(\eta = \|y^+\|\), we obtain \(y^+\) from~\eqref{eq:MagicProj}, and, using~\eqref{eq:uw},
  \[
    s = \proj{\Delta \mathbb{B}_2}\left(\proj{[q-\nu \lambda\mathbf{1}, q + \nu \lambda\mathbf{1}]} \left(-\frac{\eta}{\Delta}x \right)\right) = \left(\proj{[q-\nu \lambda\mathbf{1}, q + \nu \lambda\mathbf{1}]} \left(-\frac{\eta}{\Delta}x \right)\right)\frac{\Delta}{\eta}.
  \]
%  where again $s = \prox{\nu \psi}(q)$ in \eqref{eq:hardprox}.
\end{enumerate}

  \section{A quadratic regularization variant}%
\label{sec:quad-reg}

We now describe a variant of the trust-region algorithm of the previous sections inspired by the modified Gauss-Newton scheme proposed by \citet{nesterov-2007} in the context of nonlinear least-squares problems.
Here again, \citet{cartis-gould-toint-2011} establish a complexity of \(O(\epsilon^{-2})\) iterations to attain a near-optimality condition under the assumption that \(h\) is convex and globally Lipschitz continuous.
In the sequel, we obtain the same complexity bound under \Cref{asm:prob}.
The quadratic regularization method decribed below is closely related to the standard proximal gradient method with the exception that it employs an adaptive steplength.
It may be used as an alternative to a linesearch-based proximal gradient method such as those of \citet{li-lin-2015} and \citet{bot-csetnek-laszlo-2016}.

In the quadratic regularization method, we use the linear model
\begin{equation}%
  \label{eq:def-phi-quad-reg}
  \varphi(s; x) = f(x) + \nabla f{(x)}^T s \approx f(x + s)
\end{equation}
together with a model of \(\psi(s; x)\) that satisfies \Cref{asm:parametric-all}.
The first difference is that in the present setting, the Lipschitz constant of \(\nabla \varphi(\cdot; x)\) is \(L(x) = 0\) for all \(x \in \R^n\).
The second difference is that we must now assume that \(\psi(\cdot; x)\) is prox-bounded.
At \(x\), we define
\begin{subequations}%
  \label{eq:qrsub}
  \begin{align}
    p(\sigma; x) & := \minimize{s} \ m(s; x, \sigma), \\
    P(\sigma; x) & := \argmin{s} \ m(s; x, \sigma),
  \end{align}
\end{subequations}
where
\begin{equation}%
  \label{eq:qr-model}
  m(s; x, \sigma) := \varphi(s; x) + \psi(s; x) + \tfrac{1}{2} \sigma \|s\|^2,
\end{equation}
and \(\sigma > 0\) is a regularization parameter.
From \(x\), the method computes a step \(s \in P(\sigma; x)\).
As earlier, let us also define
\begin{equation}%
  \label{eq:def-xi-qr}
  \xi(\sigma; x) := f(x) + h(x) - p(\sigma; x) \geq 0.
\end{equation}

If we combine~\eqref{eq:def-phi-quad-reg} with~\eqref{eq:qr-model}, we may write
\begin{equation}%
  \label{eq:qr-model-pg}
  m(s; x, \sigma) =
  \tfrac{1}{2} \sigma \|s + \sigma^{-1} \nabla f(x)\|^2 + \psi(s; x) + f(x) - \tfrac{1}{2} \sigma^{-1} \|\nabla f(x)\|^2,
\end{equation}
where the last two terms are independent of \(s\).
In~\eqref{eq:qr-model-pg}, we recognize a model of the form~\eqref{eq:pg1}, so that minimizing~\eqref{eq:qr-model} amounts to performing a single step of the proximal gradient method with step size \(1 / \sigma\) and Lipschitz constant \(L = 0\).
The decrease guaranteed by the proximal gradient method is given by~\eqref{eq:pg1-decrease-palm}, i.e.,
\begin{equation}%
  \label{eq:qr-pg1-decrease-m}
  \xi(x; \sigma) =
  f(x) + h(x) - m(s; x, \sigma) \geq
  \tfrac{1}{2} \sigma \|s\|^2,
\end{equation}
so that
\begin{equation}%
  \label{eq:qr-pg1-decrease}
  f(x) + h(x) - (\varphi(s; x) + \psi(s; x)) \geq
  \sigma \|s\|^2.
\end{equation}
Because of~\eqref{eq:qr-pg1-decrease}, there is no need for a sufficient decrease assumption such as~\eqref{eq:decrease_nonlipschitz_xi} in the quadratic regularization method.

In view of~\eqref{eq:qr-model-pg}, \Cref{prop:prox} applies to~\eqref{eq:qrsub}.
In particular, \(p(\sigma; x)\) is continuous in \((\sigma, x)\), and \(P(\sigma; x)\) is nonempty and compact for all \(\sigma > 0\).

By \Cref{prop:rtrw}, for any \(\sigma > 0\), if \(s \in P(\sigma; x)\), then \(0 \in \nabla f(x) + \partial \psi(s; x) + \sigma s\).
Thus, we have the following optimality result.

\begin{lemma}%
  \label{lem:qr-stationary}
  Let \Cref{asm:parametric-all} be satisfied, \(\psi(\cdot; x)\) be prox-bounded, and let \(\sigma > 0\).
  Then \(\xi(\sigma; x) = 0 \Longleftrightarrow 0 \in P(\sigma; x) \Longrightarrow x\) is first-order stationary for~\eqref{eq:nlp}.
\end{lemma}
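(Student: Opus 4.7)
The plan is to mirror the structure of Proposition~\ref{prop:xi-critical}, which establishes the analogous statement for the trust-region subproblem. The key ingredients are the definitions of $\xi(\sigma; x)$ and $P(\sigma; x)$ from~\eqref{eq:qrsub}--\eqref{eq:def-xi-qr}, together with the matching conditions $\varphi(0; x) = f(x)$, $\nabla_s \varphi(0; x) = \nabla f(x)$, $\psi(0; x) = h(x)$, and $\partial \psi(0; x) = \partial h(x)$ from \Cref{asm:parametric-all}. Prox-boundedness of $\psi(\cdot; x)$ together with $\sigma > 0$ will guarantee that $p(\sigma; x)$ is finite and $P(\sigma; x)$ is nonempty via \Cref{prop:prox}, so that every quantity appearing is well defined.

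First I would establish the equivalence $\xi(\sigma; x) = 0 \Longleftrightarrow 0 \in P(\sigma; x)$. Evaluating the model at $s=0$ gives $m(0; x, \sigma) = \varphi(0; x) + \psi(0; x) + 0 = f(x) + h(x)$, so $\xi(\sigma; x) = m(0; x, \sigma) - p(\sigma; x) \geq 0$. Equality with zero therefore holds if and only if $s = 0$ attains the infimum of $m(\cdot; x, \sigma)$, which is precisely the statement $0 \in P(\sigma; x)$.

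Next I would prove the implication $0 \in P(\sigma; x) \Longrightarrow x$ is first-order stationary for~\eqref{eq:nlp}. Since $0$ minimizes the proper function $m(\cdot; x, \sigma)$, \Cref{prop:rtrw} yields $0 \in \partial m(0; x, \sigma)$. The smooth part $\varphi(\cdot; x) + \tfrac{1}{2}\sigma\|\cdot\|^2$ is continuously differentiable and $\psi(\cdot; x)$ is finite at $0$ (because $\psi(0; x) = h(x)$ and $h$ is finite at every iterate), so the sum rule in the last sentence of \Cref{prop:rtrw} applies and gives
\[
\partial m(0; x, \sigma) = \nabla_s \varphi(0; x) + \sigma \cdot 0 + \partial \psi(0; x) = \nabla f(x) + \partial h(x).
\]
Hence $0 \in \nabla f(x) + \partial h(x)$, which is exactly first-order stationarity for~\eqref{eq:nlp}.

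There is no real obstacle; the proof is essentially a bookkeeping exercise once the matching conditions of \Cref{asm:parametric-all} are invoked. The only point requiring minor care is the applicability of the subdifferential sum rule, which rests on the continuous differentiability of $\varphi(\cdot; x) + \tfrac{1}{2}\sigma\|\cdot\|^2$ and on $h$ being finite at $x$; both are already built into the standing assumptions. As in Proposition~\ref{prop:xi-critical}, the reverse implication (first-order stationarity of $x$ implying $0 \in P(\sigma; x)$) is not asserted and would require convexity, which is not assumed here.
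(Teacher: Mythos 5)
Your proof is correct and follows essentially the same route the paper takes: the paper omits a formal proof, relying on the remark immediately preceding the lemma (that \(s \in P(\sigma; x)\) implies \(0 \in \nabla f(x) + \partial\psi(s; x) + \sigma s\) via \Cref{prop:rtrw}) together with the same argument as \Cref{prop:xi-critical} for the equivalence \(\xi(\sigma;x)=0 \Longleftrightarrow 0 \in P(\sigma;x)\). Your write-up simply makes those two steps explicit, so there is nothing to correct.
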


As in the trust-region context, we require that the difference between the model and the actual objective be bounded by a multiple of \(\|s_k\|^2\):
\begin{stepassumption}%
  \label{asm:step_qr}
  There exists \(\kappa_{\textup{m}} > 0\) such that for all \(k\),
  \begin{equation}
    \label{eq:model-adequation-qr}
    |f(x_k + s_k) + h(x_k + s_k) - \varphi_k(s_k; x_k) - \psi(s_k; x_k)| \leq \kappa_{\textup{m}} \|s_k\|^2.
  \end{equation}
\end{stepassumption}

Once a step \(s\) has been computed, its quality is assessed by comparing the decrease in \(\varphi(\cdot; x) + \psi(\cdot; x)\) with that in the objective \(f + h\), similarly to \Cref{alg:tr-nonsmooth}.
If both are in strong agreement, \(\sigma\) decreases.
Otherwise, \(\sigma\) increases.
We state the overall algorithm as \Cref{alg:qr-nonsmooth}.

\begin{algorithm}
  \caption[caption]{%
    Nonsmooth quadratic regularization algorithm.%
    \label{alg:qr-nonsmooth}
  }
  \begin{algorithmic}[1]
    \State Choose constants \(0 < \eta_1 \leq \eta_2 < 1\) and \(0 < \gamma_3 \leq 1 < \gamma_1 \leq \gamma_2\).
    \State Choose \(x_0 \in \R^n\) where \(h\) is finite, \(\sigma_0 > 0\), compute \(f(x_0) + h(x_0)\).
    \For{\(k = 0, 1, \dots\)}
      \State Define \(m(s; x_k, \sigma_k)\) as in~\eqref{eq:qr-model} satisfying \Cref{asm:parametric-all} with \(L = 0\).
      \State Compute a solution \(s_k\) of~\eqref{eq:qrsub} such that \Cref{asm:step_qr} holds.
      \State Compute the ratio
      \[
      \rho_k :=
      \frac{
        f(x_k) + h(x_k) - (f(x_k + s_k) + h(x_k + s_k))
      }{
        \varphi(0; x_k) + \psi(0; x_k) - (\varphi(s_k; x_k) + \psi(s_k; x_k))
      }.
      \]
      \State If \(\rho_k \geq \eta_1\), set \(x_{k+1} = x_k + s_k\). Otherwise, set \(x_{k+1} = x_k\).
      \State Update the regularization parameter according to
      \[
        \sigma_{k+1} \in
        \begin{cases}
          [\gamma_3 \sigma_k, \, \sigma_k] & \text{ if } \rho_k \geq \eta_2,
          \\ [\sigma_k, \, \gamma_1 \sigma_k] & \text{ if } \eta_1 \leq \rho_k < \eta_2,
          \\ [\gamma_1 \sigma_k, \, \gamma_2 \sigma_k] & \text{ if } \rho_k < \eta_1.
        \end{cases}
      \]
    \EndFor
  \end{algorithmic}
\end{algorithm}

We now combine~\eqref{eq:qr-pg1-decrease} with~\Cref{asm:step_qr} into the following result.

\begin{theorem}%
  \label{thm:sigmasucc}
  Let \Cref{asm:parametric-all} and \Cref{asm:step_qr} be satisfied, \(\psi(\cdot; x_k)\) be prox-bounded for each \(k \in \N\), and let
  \begin{equation}
    \label{eq:def-sigmasucc}
    \sigma_{\textup{succ}} :=
    \kappa_{\textup{m}} / (1 - \eta_2)
     > 0.
  \end{equation}
  If \(x_k\) is not first-order stationary and
  \(
    \sigma_k \geq \sigma_{\textup{succ}}
  \),
  then iteration \(k\) is very successful and
  \(
    \sigma_{k+1} \leq \sigma_k
  \).
\end{theorem}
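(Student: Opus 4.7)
The plan is to mirror the proof of \Cref{thm:deltasucc}, exploiting the fact that in the quadratic regularization setting the guaranteed model decrease has an especially clean form given by~\eqref{eq:qr-pg1-decrease}.

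First, I would argue that the denominator of $\rho_k$ is strictly positive. Since $x_k$ is not first-order stationary, \Cref{lem:qr-stationary} rules out $0 \in P(\sigma_k; x_k)$, so any minimizer $s_k$ of~\eqref{eq:qrsub} satisfies $s_k \neq 0$. Combined with~\eqref{eq:qr-pg1-decrease} applied to the computed step, this yields
\[
  \varphi(0; x_k) + \psi(0; x_k) - (\varphi(s_k; x_k) + \psi(s_k; x_k)) \geq \sigma_k \|s_k\|^2 > 0.
\]
Hence $\rho_k$ is well defined and the ratio can be bounded explicitly.

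Next, I would compute $|\rho_k - 1|$ by combining the above lower bound on the denominator with the model adequacy bound~\eqref{eq:model-adequation-qr} from \Cref{asm:step_qr} on the absolute value of the numerator discrepancy:
\[
  |\rho_k - 1|
  = \frac{|f(x_k + s_k) + h(x_k + s_k) - \varphi(s_k; x_k) - \psi(s_k; x_k)|}{\varphi(0; x_k) + \psi(0; x_k) - (\varphi(s_k; x_k) + \psi(s_k; x_k))}
  \leq \frac{\kappa_{\textup{m}} \|s_k\|^2}{\sigma_k \|s_k\|^2}
  = \frac{\kappa_{\textup{m}}}{\sigma_k}.
\]
The assumption $\sigma_k \geq \sigma_{\textup{succ}} = \kappa_{\textup{m}}/(1 - \eta_2)$ then gives $|\rho_k - 1| \leq 1 - \eta_2$, so $\rho_k \geq \eta_2$, i.e., iteration $k$ is very successful. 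The regularization update rule of \Cref{alg:qr-nonsmooth} then immediately delivers $\sigma_{k+1} \leq \sigma_k$.

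I do not anticipate a serious obstacle: the argument is essentially a dimensional-analysis-style comparison of an $O(\|s_k\|^2)$ modeling error against an $\Omega(\sigma_k \|s_k\|^2)$ predicted decrease. The only delicate point, compared with the trust-region proof, is that the analogue of $\Delta_k$ no longer enters through the step-size/radius link; the guaranteed decrease~\eqref{eq:qr-pg1-decrease} is already expressed directly in terms of $\sigma_k$ and $\|s_k\|^2$, so no auxiliary lower bound on $\xi$ (analogous to the one used in \Cref{thm:deltasucc} via line~\ref{alg:tr-nonsmooth:nuk}) is needed here.
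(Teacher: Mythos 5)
Your proposal is correct and follows essentially the same route as the paper: bound the numerator of \(|\rho_k - 1|\) by \(\kappa_{\textup{m}}\|s_k\|^2\) via \Cref{asm:step_qr}, bound the denominator below by \(\sigma_k\|s_k\|^2\) via~\eqref{eq:qr-pg1-decrease}, and cancel \(\|s_k\|^2\) to conclude \(\rho_k \geq \eta_2\). Your additional remarks (nonvanishing of \(s_k\) via \Cref{lem:qr-stationary} and the absence of any analogue of the \(\nu_k\)--\(\Delta_k\) link) are accurate refinements of the same argument.
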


\begin{proof}
  Let \(s_k\) be the step computed at iteration~\(k\) of \Cref{alg:qr-nonsmooth}.
  Because \(x_k\) is not first-order stationary, \(s_k \neq 0\).
  \Cref{asm:step_qr} and~\eqref{eq:qr-pg1-decrease} combine to yield
  \[
    |\rho_k - 1| =
    \frac{
      |f(x_k + s_k) + h(x_k + s_k) - (\varphi(s_k; x_k) + \psi(s_k; x_k))|
    }{
      \varphi(0; x_k) + \psi(0; x_k) - (\varphi(s_k; x_k) + \psi(s_k; x_k))
    } \leq
    \frac{\kappa_{\textup{m}} \|s_k\|^2}{\sigma_k \|s_k\|^2}.
  \]
  After simplifying by \(\|s_k\|^2\), we obtain \(\sigma_k \geq \sigma_{\text{succ}} \Longrightarrow \rho_k \geq \eta_2\).
\end{proof}

\Cref{thm:sigmasucc} ensures existence of a constant \(\sigma_{\max} > 0\) such that
\begin{equation}%
  \label{eq:sigmamax}
  \sigma_k \leq
  \sigma_{\max} := \min(\sigma_0, \gamma_2 \sigma_{\textup{succ}}) > 0
  \quad \text{for all } k \in \N.
\end{equation}

A result analogous to \Cref{thm:tr-finite} holds for \Cref{alg:qr-nonsmooth}.
We omit the proof, as it is nearly identical.

\begin{theorem}%
  \label{thm:qr-finite}
  Let \Cref{asm:parametric-all} and \Cref{asm:step_qr} be satisfied, and \(\psi(\cdot; x_k)\) be prox-bounded for each \(k \in \N\).
  If \Cref{alg:qr-nonsmooth} only generates finitely many successful iterations, \(x_k = x^*\) for sufficiently large \(k\) and \(x^*\) is first-order critical.
\end{theorem}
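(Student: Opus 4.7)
The plan is to mirror the proof of \Cref{thm:tr-finite} almost verbatim, with the trust-region radius \(\Delta_k\) playing the role of the regularization parameter \(\sigma_k\), but with the direction of the monotonicity argument reversed: on bad iterations \(\sigma_k\) now \emph{grows} rather than shrinks, and \Cref{thm:sigmasucc} forces success from \emph{above}, not from below.

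First I would observe that, since \Cref{alg:qr-nonsmooth} generates only finitely many successful iterations by hypothesis, there exists \(k_0 \in \N\) such that every iteration \(k \geq k_0\) is unsuccessful. The update rule then fixes the iterate: \(x_k = x_{k_0} =: x^*\) for all \(k \geq k_0\), which establishes the first conclusion. Next, I would proceed by contradiction and suppose that \(x^*\) is not first-order critical for~\eqref{eq:nlp}. The update rule of \Cref{alg:qr-nonsmooth} prescribes \(\sigma_{k+1} \in [\gamma_1 \sigma_k, \, \gamma_2 \sigma_k]\) on every unsuccessful iteration, with \(\gamma_1 > 1\); iterating from \(k_0\) yields \(\sigma_k \geq \gamma_1^{k - k_0} \sigma_{k_0}\), so \(\sigma_k \to \infty\).

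Consequently, there is some \(k_1 \geq k_0\) with \(\sigma_{k_1} \geq \sigma_{\textup{succ}}\), where \(\sigma_{\textup{succ}}\) is the threshold from \Cref{thm:sigmasucc}. Since \(x_{k_1} = x^*\) is assumed not to be first-order stationary, and since \Cref{asm:parametric-all}, \Cref{asm:step_qr} and the prox-boundedness of \(\psi(\cdot; x_{k_1})\) all hold, \Cref{thm:sigmasucc} applies and asserts that iteration \(k_1\) is very successful. This contradicts the defining property of \(k_0\), and hence \(x^*\) must be first-order critical.

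There is essentially no technical obstacle here: once \Cref{thm:sigmasucc} is in hand, the argument is structurally identical to that of \Cref{thm:tr-finite}. The only point worth flagging explicitly is the sign flip between the two settings—unsuccessful trust-region iterations drive \(\Delta_k\) down toward the threshold \(\Delta_{\textup{succ}}\), while unsuccessful quadratic-regularization iterations drive \(\sigma_k\) up toward \(\sigma_{\textup{succ}}\)—so the contradiction is obtained by unbounded growth rather than by controlled decay, but in both cases the mechanism of the algorithm itself inevitably enters the regime in which a successful step must occur.
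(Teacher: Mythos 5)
Your proof is correct and follows exactly the route the paper intends: the paper omits the proof of \Cref{thm:qr-finite} precisely because it is the proof of \Cref{thm:tr-finite} with the monotonicity reversed ($\sigma_k$ grows geometrically on unsuccessful iterations since $\gamma_1 > 1$, eventually exceeding $\sigma_{\textup{succ}}$, whence \Cref{thm:sigmasucc} forces a very successful iteration and yields the contradiction). Nothing is missing.
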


According to \Cref{prop:prox} part~\ref{prop:prox-monotonic}, and the identification \(\nu = \sigma^{-1}\), \(p(\sigma; x)\) increases as \(\sigma\) increases, so that \(\xi(\sigma; x)\) decreases as \(\sigma\) increases, and~\eqref{eq:sigmamax} yields
% Thus, it follows from~\eqref{eq:sigmamax} that
\begin{equation}%
  \label{eq:ximin-qr}
  \xi(\sigma_k; x_k) \geq \xi(\sigma_{\max}; x_k)
  \quad \text{for all } k \in \N.
\end{equation}
\Cref{lem:qr-stationary},~\eqref{eq:qr-pg1-decrease} and~\eqref{eq:ximin-qr} suggest using \(\xi{(\sigma_{\max}; x_k)}^{\frac12}\) as stationarity measure.

Let \(\epsilon > 0\) be a tolerance set by the user and consider the sets~\eqref{eq:S-U-sets}.
We are now in position to establish complexity results analogous to those obtained for \Cref{alg:tr-nonsmooth}.
% The proofs are nearly identical and are omitted.
The proof is nearly identical and is omitted.

% \begin{lemma}%
%   \label{lem:qr-cmplx-successful}
%   Let \Cref{asm:parametric-all} and \Cref{asm:step_qr} be satisfied, and \(\psi(\cdot; x_k)\) be prox-bounded for each \(k \in \N\).
%   Assume there are infinitely many successful iterations and that \(f(x_k) + h(x_k) \geq {(f + h)}_{\textup{low}}\) for all \(k \in \N\).
%   Then, for all \(\epsilon \in (0, \, 1)\),
%   \begin{equation}
%     \label{eq:qr-bound-Seps}
%     |\mathcal{S}(\epsilon)| =
%     O(\epsilon^{-2}).
%   \end{equation}
% \end{lemma}

% \begin{lemma}%
%   \label{lem:qr-cmplx-unsuccessful}
%   Under the assumptions of \Cref{lem:qr-cmplx-successful},
%   \begin{equation}
%     \label{eq:qr-bound-Ueps}
%     |\mathcal{U}(\epsilon)| =
%     O(1).
%   \end{equation}
% \end{lemma}

% \begin{theorem}%
%   \label{thm:qr-complexity-bound}
%   Under the assumptions of \Cref{lem:qr-cmplx-successful},
%   \begin{equation}
%     \label{eq:qr-nonsmooth-complexity}
%     |\mathcal{S}(\epsilon)| + |\mathcal{U}(\epsilon)| =
%     O(\epsilon^{-2}) + O(1) =
%     O(\epsilon^{-2}).
%   \end{equation}
% \end{theorem}

\begin{theorem}%
  \label{thm:qr-complexity-bound}
  Let \Cref{asm:parametric-all} and \Cref{asm:step_qr} be satisfied, and \(\psi(\cdot; x_k)\) be prox-bounded for each \(k \in \N\).
  Assume there are infinitely many successful iterations and that \(f(x_k) + h(x_k) \geq {(f + h)}_{\textup{low}}\) for all \(k \in \N\).
  Then, for all \(\epsilon \in (0, \, 1)\),
  \begin{equation}
    \label{eq:qr-nonsmooth-complexity}
    |\mathcal{S}(\epsilon)| =
    O(\epsilon^{-2}),
    \quad
    |\mathcal{U}(\epsilon)| =
    O(\epsilon^{-2}),
    \quad
    |\mathcal{S}(\epsilon)| + |\mathcal{U}(\epsilon)| =
    O(\epsilon^{-2}).
  \end{equation}
\end{theorem}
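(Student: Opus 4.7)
The plan is to mirror the analysis performed for \Cref{alg:tr-nonsmooth} in \Cref{lem:cmplx-successful,lem:cmplx-unsuccessful,thm:complexity-bound}, adapting the arguments to the quadratic regularization setting. The role played by $\Delta_k^{-1}$ in the trust-region analysis is taken over by $\sigma_k$, with \eqref{eq:sigmamax} replacing~\eqref{eq:deltamin}.

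First I would bound $|\mathcal{S}(\epsilon)|$. For $k \in \mathcal{S}(\epsilon)$, the definition of $\rho_k$ together with $\rho_k \geq \eta_1$ gives
\[
  (f+h)(x_k) - (f+h)(x_{k+1}) \geq \eta_1 \bigl[ \varphi(0; x_k) + \psi(0; x_k) - \varphi(s_k; x_k) - \psi(s_k; x_k) \bigr].
\]
By the definitions of $m$ in~\eqref{eq:qr-model} and $\xi$ in~\eqref{eq:def-xi-qr}, the bracketed quantity equals $\xi(\sigma_k; x_k) + \tfrac{1}{2} \sigma_k \|s_k\|^2$, which is bounded below by $\xi(\sigma_k; x_k)$. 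Combining with~\eqref{eq:ximin-qr} and the fact that $\xi(\sigma_{\max}; x_k) > \epsilon^2$ whenever $k < k(\epsilon)$ gives a per-step decrease of at least $\eta_1 \epsilon^2$. Telescoping over $k \in \mathcal{S}(\epsilon)$ and invoking the lower bound $(f+h)_{\textup{low}}$ yields
\[
  |\mathcal{S}(\epsilon)| \leq \frac{(f+h)(x_0) - (f+h)_{\textup{low}}}{\eta_1 \epsilon^2} = O(\epsilon^{-2}).
\]

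Second, I would bound $|\mathcal{U}(\epsilon)|$ exactly as in \Cref{lem:cmplx-unsuccessful}, but with the inequalities reversed since $\sigma_k$ grows on unsuccessful iterations. The update rule of \Cref{alg:qr-nonsmooth} yields $\sigma_{k+1} \geq \gamma_1 \sigma_k$ on unsuccessful iterations (with $\gamma_1 > 1$) and $\sigma_{k+1} \geq \gamma_3 \sigma_k$ on successful iterations. Multiplying these updates across iterations $0,\dots,k(\epsilon)-1$ gives $\sigma_{k(\epsilon)-1} \geq \sigma_0 \gamma_1^{|\mathcal{U}(\epsilon)|} \gamma_3^{|\mathcal{S}(\epsilon)|}$. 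Combining with the uniform upper bound $\sigma_k \leq \sigma_{\max}$ from~\eqref{eq:sigmamax}, taking logarithms, and using $\log \gamma_1 > 0$ yields $|\mathcal{U}(\epsilon)| = O(\log(\sigma_{\max}/\sigma_0) + |\mathcal{S}(\epsilon)|) = O(\epsilon^{-2})$. Summing the two bounds gives the stated $|\mathcal{S}(\epsilon)| + |\mathcal{U}(\epsilon)| = O(\epsilon^{-2})$.

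The main technical simplification relative to the trust-region case is that no step sufficient-decrease assumption analogous to~\eqref{eq:decrease_nonlipschitz_xi} is required: the inequality $\varphi(0; x_k) + \psi(0; x_k) - \varphi(s_k; x_k) - \psi(s_k; x_k) \geq \xi(\sigma_k; x_k)$ is automatic from the definitions of $m$ and $\xi$ whenever $s_k$ solves~\eqref{eq:qrsub}, so no Cauchy-decrease-type estimate is needed. Beyond that subtlety, the argument is a direct transcription of the trust-region proofs, with $\sigma_k$ in place of $\Delta_k^{-1}$ and with monotonicity of $\xi(\cdot; x)$ (established via \Cref{prop:prox}\ref{prop:prox-monotonic}) replacing the elementary monotonicity of $\xi(\Delta; x, \nu)$ in $\Delta$.
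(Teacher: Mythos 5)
Your proposal is correct and is precisely the argument the paper intends: the paper omits this proof as ``nearly identical'' to \Cref{lem:cmplx-successful,lem:cmplx-unsuccessful,thm:complexity-bound}, and your transcription---with $\sigma_k$ playing the role of $\Delta_k^{-1}$, \eqref{eq:sigmamax} replacing \eqref{eq:deltamin}, \eqref{eq:ximin-qr} replacing \eqref{eq:xi-deltamin}, and \eqref{eq:qr-pg1-decrease} making a sufficient-decrease assumption unnecessary---is exactly that adaptation. The telescoping bound on $|\mathcal{S}(\epsilon)|$ and the logarithmic counting of $|\mathcal{U}(\epsilon)|$ via the multiplicative update of $\sigma_k$ are both sound.
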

  \section{Implementation and numerical results}%
\label{sec:numerical}

\Cref{alg:tr-nonsmooth,alg:qr-nonsmooth} are implemented in Julia \citep{bezanson-edelman-karpinski-shah-2017} and are available at \https{github.com/UW-AMO/TRNC}, along with scripts to reproduce our experiments.
Our design allows the user to choose a method to compute a step, an important feature given the nonstandard \(\psi_k + \chi_k\) operator.

We compare the performance of \Cref{alg:tr-nonsmooth} (TR) to other proximal quasi-Newton routines: PANOC \citep{stella-themelis-sopasakis-patrinos-2017} and ZeroFPR \citep{themelis-stella-patrinos-2017}.
PANOC can be viewed as a proximal gradient descent scheme accelerated by limited-memory BFGS steps.
It performs proximal gradient iterations with a backtracking linesearch, and then \(20\) quasi-Newton steps computed using the proximal gradient method.
ZeroFPR is similar, but takes a fixed number of quasi-Newton steps between each proximal gradient step;
it defaults to proximal gradient descent if no progress is made during the inner quasi-Newton steps.
To compare, we count gradient evaluations as well as proximal operator evaluations, but in our example problems, proximal evaluations are far cheaper than gradients.

In the following experiments, we set \(\psi(s; x_k) := h(x_k + s)\).
Our stopping criteria for~\Cref{alg:tr-nonsmooth} is \(\xi(\Delta_k; x_k, \nu_k)^{1/2}\), which we use as a proxy for the first-order error measure \(\nu_k^{-1} \xi(\Delta_{\min}; x_k, \nu_k)^{1/2}\) defined in~\eqref{eq:optim-measure}.
We set \(\Delta_0 := 1.0\).
We compute trust-region steps using the proximal-gradient (PG) method with step length chosen as in \Cref{cor:step_bound}, denoted TR-PG in figures and tables.
The user could choose accelerated variants for the subproblem,
including our quadratic regularization procedure \Cref{alg:qr-nonsmooth} (R2), signified by TR-R2.
In our experiments, the latter performed similarly to the proximal gradient method, although it typically required fewer inner iterations.
We use proximal operators that include both \(\psi(\cdot; x_k)\) and the indicator of the trust region as described in \cref{sec:prox-operators}.
The criticality measure used in the inner PG iterations is the norm of the subgradient~\eqref{eq:sudifferential_phi}, while that used in the R2 inner iterations is \(\xi(\sigma_k; x_k)^{1/2}\), which is a proxy for \(\xi(\sigma_{\max}; x_k)^{1/2}\).
We set the inner tolerance to
\[
  \min (0.01, \, \xi(\Delta_k; x_k + s_{k,1}, \nu_k)^{\frac12}) \, \xi(\Delta_k; x_k + s_{k,1}, \nu_k),
\]
which is inspired from inexact Newton methods to encourage fast local convergence.
Note that \(\xi\) is computed with the first step \(s_{k,1}\) from Line~\ref{alg:tr-nonsmooth:sk1} of \cref{alg:tr-nonsmooth}.

We use automatic differentiation as implemented in the ForwardDiff package \citep{forwarddiff} to obtain \(\nabla f(x)\) and construct limited-memory quasi-Newton approximations by way of the LinearOperators package \citep{linearoperators}.
Below, we use LSR1 and LBFGS approximations with memory~\(5\) for the BPDN and ODE examples, respectively.
% In all of our experiments, Hessian approximations remained sufficiently well conditioned that we did not use the strategy of \cref{sec:hess-bounded}.

\subsection{LASSO/BPDN}%
\label{sec:bpdn}

The first set of experiments concerns LASSO/basis pursuit de-noise (BPDN) problems, which arise in statistical \citep{tibshirani1996regression} and compressed sensing \citep{donoho2006compressed} applications.
We seek to recover a sparse signal \(x_{\textup{true}} \in \R^n\) given observed noisy data \(b \in \R^m\).
\(x_{\textup{true}}\) is a sparse vector containing mostly zeros and 10 values of \(\pm 1\) where both the index of the nonzero entry and \(\pm\) are randomly generated.

\begin{figure}[h]
  \centering
  \subfloat[Signal: \(h=\lambda\|\cdot\|_1\), \(\Delta\mathbb{B}_2\)]{\label{fig:bpdn_b2_x}\includetikzgraphics[width=.49\linewidth]{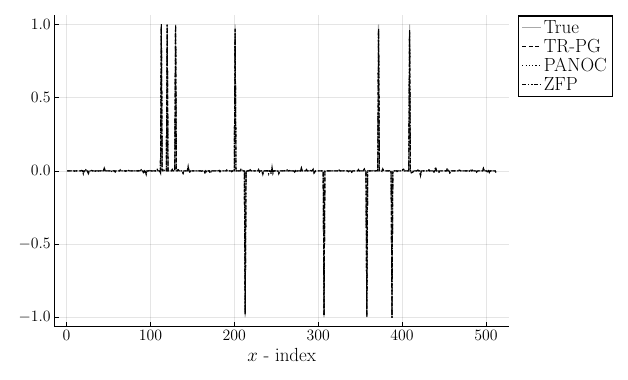}}
  \subfloat[History: \(h=\lambda\|\cdot\|_1\), \(\Delta\mathbb{B}_2\)]{\label{fig:bpdn_b2_obj}\includetikzgraphics[width=.49\linewidth]{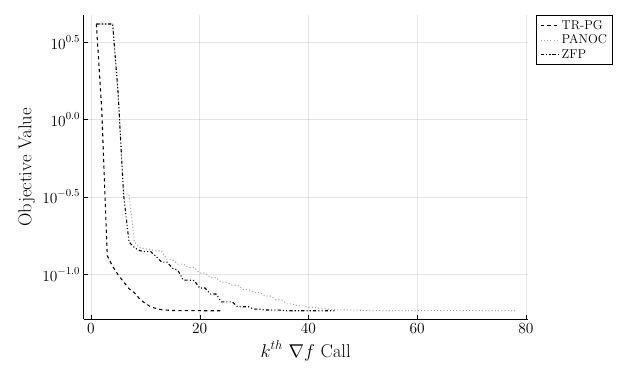}}
  \\
  \subfloat[Signal: \(h=\lambda\|\cdot\|_0\), \(\Delta\mathbb{B}_\infty\)]{\label{fig:bpdn_l0_x}\includetikzgraphics[width=.49\linewidth]{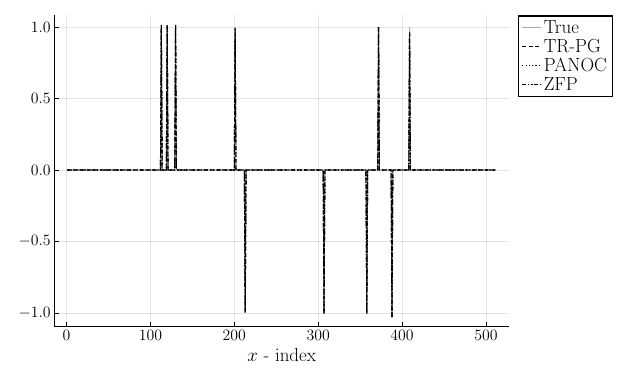}}
  \subfloat[History: \(h=\lambda\|\cdot\|_0\), \(\Delta\mathbb{B}_\infty\)]{\label{fig:bpdn_l0_obj}\includetikzgraphics[width=.49\linewidth]{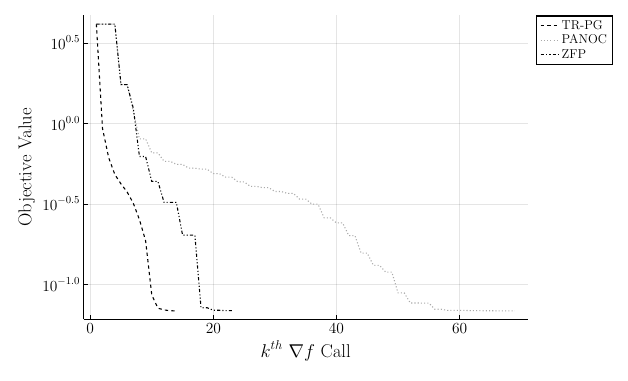}}
  \\
  \subfloat[Signal: \(h=\chi(\cdot; \lambda \mathbb{B}_0)\), \(\Delta\mathbb{B}_\infty\)]{\label{fig:bpdn_b0_x}\includetikzgraphics[width=.49\linewidth]{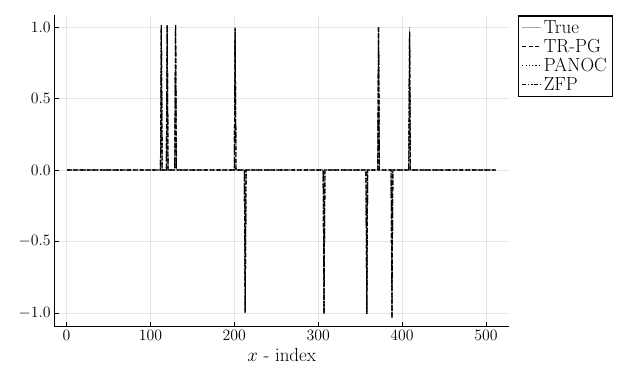}}
  \subfloat[History: \(h=\chi(\cdot; \lambda \mathbb{B}_0)\), \(\Delta\mathbb{B}_\infty\)]{\label{fig:bpdn_b0_obj}\includetikzgraphics[width=.49\linewidth]{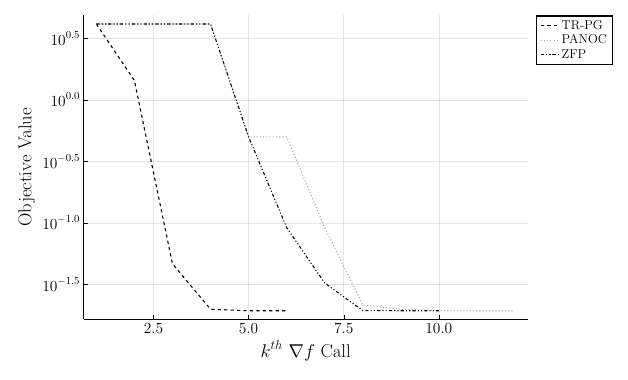}}
  \caption{%
    BPDN results~\eqref{eq:lasso} with \Cref{alg:tr-nonsmooth} and a proximal gradient subsolver (TR-PG), PANOC and ZeroFPR (ZFP): Signal plots (left) and objective value history (right).
    \(\Delta \mathbb{B}_p\) indicates the norm used to define the trust region.
  }%
  \label{fig:bpdn}
\end{figure}

We set \(m=200\), \(n = 512\), \(b := A x_{\textup{true}} + \varepsilon\) where \(\varepsilon \sim \mathcal{N}(0, .01)\) and \(A\) to have orthonormal rows---\(A^T\) is generated by taking the \(Q\) factor in the thin QR decomposition of a random \(n\)\(\times\)\(m\) matrix.
% Its singular values are either one or zero.
To recover \(x\), we solve
\begin{equation}
  \label{eq:lasso}
  \minimize{x} \tfrac{1}{2} \|Ax - b\|^2_2 + h(x).
\end{equation}
We first consider \(h(x) = \lambda \|x\|_p\) for \(p \in \{0, 1\}\) with \(\lambda = 0.1 \|A^T b\|_\infty\) in the vein of \citep{spgl1}, and employ both the \(\ell_2\) and \(\ell_\infty\) norms to define the trust region.
We also consider \(h(x) = \chi(x; \lambda\mathbb{B}_0)\) with \(\lambda = 10\) and an \(\ell_\infty\)-norm trust region.
We set the maximum number of inner iterations to \(5000\) and \(\epsilon = 10^{-3}\).
The quasi-Newton model is defined by a limited-memory SR1 approximation with memory \(5\).
All algorithms use \(x_0 = 0\).
% We set the scaling parameter to be  \(\lambda = .1\|A^T b\|_\infty\) in the vein of \citep{spgl1} for \(h=\ell_1,\ell_0\) and \(\lambda = 10\) (the number of sparse signals) for \(h = \lambda \mathbb{B}_0\).

\Cref{tab:bpdn} and \Cref{fig:bpdn} summarize our results.
% BPDN results for \(h = \ell_1, \ell_0, \lambda\mathbb{B}_0\) in~\eqref{eq:lasso} are shown in \Cref{tab:bpdn} and \Cref{fig:bpdn} with  \(\Delta\mathbb{B}_2\) and  \(\Delta\mathbb{B}_\infty\) trust regions, as well as \(\lambda \mathbb{B}_0\) with the \(\Delta\mathbb{B}_\infty\) trust region.
% The number of objective evaluations taken by each algorithm is denoted by \(N\).
\Cref{tab:bpdn} shows that \Cref{alg:tr-nonsmooth} performs comparably to PANOC and ZeroFPR in terms of parameter fit, it performs significantly fewer gradient evaluations and significantly more proximal operator evaluations.
% while comparing \Cref{tab:bpdn_b2} and \Cref{tab:bpdn_l0} shows that choice of trust-region does not affect the solution. % for this problem over choice of regularizer.
% Focusing on the number of function evaluations highlights the advantages of the TR approach over PANOC and ZeroFPR, which typically rely on more gradient evaluations at every iteration during linesearch and quasi-Newton steps.
Thus there is an advantage when proximal evaluations are cheap relative to gradient evaluations, especially in situations where the proximal operator of \(\psi\) is simpler or cheaper than that of \(h\).
% While the LASSO example effectively has the same complexity for objective and quadratic function solves, this difference grows large as the objective function becomes more complicated.
All algorithms yield nearly identical solution quality.
%perform well with nearly identical results.
The objective value history in \Cref{fig:bpdn} shows a steeper initial decrease for \Cref{alg:tr-nonsmooth} with shorter tails in all cases.
Results with R2 as subproblem solver are nearly identical though R2 performed fewer inner iterations than PG.

\begin{table}[hb]
  \captionsetup{position=top}
  \caption{%
    BPDN results~\eqref{eq:lasso} with \Cref{alg:tr-nonsmooth} and a proximal gradient subsolver (TR-PG), PANOC and ZeroFPR (ZFP).
    \(\Delta \mathbb{B}_p\) indicates the norm used in the trust region.
    The true value of \(h(\cdot)/\lambda\) is \(10\) for \(\|\cdot\|_1\) and \(\|\cdot\|_0\), but \(0\) for \(\chi(\cdot; \lambda \mathbb{B}_0)\).%
  }%
  \label{tab:bpdn}
  \begin{center}
    % \centering
    \footnotesize\setlength{\tabcolsep}{2pt}
\begin{tabular}{ cc |ccc|ccc|ccc}
   &    & \multicolumn{3}{|c|}{\(h=\lambda\|\cdot\|_1\), \(\Delta\mathbb{B}_2\)} & \multicolumn{3}{|c|}{\(h=\lambda\|\cdot\|_0\), \(\Delta\mathbb{B}_\infty\)} & \multicolumn{3}{|c}{\(h=\chi(\cdot; \lambda \mathbb{B}_0)\), \(\Delta\mathbb{B}_\infty\)} \\ \hline 
    & True & TR-PG & PANOC & ZFP & TR-PG & PANOC & ZFP & TR-PG & PANOC & ZFP \\
    \hline
    % Table body
    $ f(x) $ & 0.020 & 0.005 & 0.005 & 0.005 & 0.019 & 0.019 & 0.019 & 0.019 & 0.019 & 0.019 \\
    $ h(x)/\lambda $ & 10/0  & 10.750 & 10.767 & 10.750 & 10 & 10 & 10 & 0 & 0 & 0 \\
    $ \|x - x_{\text{true}}\|_2/\|A\| $ & 0 & 0.134 & 0.141 & 0.133 & 0.055 & 0.055 & 0.056 & 0.054 & 0.056 & 0.055 \\
    $ \nabla f $ evals &   & 24 & 78 & 45 & 14 & 69 & 23 & 6 & 12 & 10 \\
    $ \prox{\nu\psi}$ calls &   & 270 & 52 & 95 & 90 & 36 & 57 & 32 & 6 & 14 \\
  \hline
\end{tabular}

    % \subfloat[\(h=\|\cdot\|_1\), \(\Delta\mathbb{B}_2\).]{\input{figs/bpdn/LS_l1_B2/1/l1b2}}
    % % \label{tab:bpdn_b2}
    % \\
    % \subfloat[\(h=\|\cdot\|_0\), \(\Delta\mathbb{B}_\infty\).]{\input{figs/bpdn/LS_l0_Binf/1/l0binf}}
    % % \label{tab:bpdn_l0}
    % \\
    % \subfloat[\(h=\chi(\cdot; \lambda \mathbb{B}_0)\), \(\Delta\mathbb{B}_\infty\).]{\input{figs/bpdn/LS_B0_Binf/1/B0binf}}
    % % \label{tab:bpdn_B0}
  \end{center}
\end{table}

\subsection{A nonlinear inverse problem}

We next consider an inverse problem consisting in recovering the regularized solution to a system of nonlinear ODEs.
We seek parameters \(x_{\textup{true}} \in \R^n\) given observed noisy data \(b = F(x_{\textup{true}}) + \varepsilon\) where \(F: \R^{n} \to \R^m\) and \(\varepsilon \sim \mathcal{N}(0, 0.1)\).
The data generating mechanism \(F\) is given by the \citet{fitzhugh1955mathematical} and \citet{nagumo1962active} model for neuron activation
\begin{equation}%
  \label{eq:fitz}
  \frac{\mathrm{d}V}{\mathrm{d}t} = (V - V^3/3 - W + x_1)x_2^{-1},
  \quad
  \frac{\mathrm{d}W}{\mathrm{d}t} = x_2(x_3 V - x_4 W + x_5),
\end{equation}
which, if \(x_1 = x_4 = x_5 = 0\), becomes the \citet{van1926lxxxviii} oscillator
\begin{equation}%
  \label{eq:vdp}
  \frac{\mathrm{d}V}{\mathrm{d}t} = (V - V^3/3 - W)x_2^{-1},
  \quad
  \frac{\mathrm{d}W}{\mathrm{d}t} = x_2(x_3 V).
\end{equation}
Both models are highly nonlinear and ill-conditioned.

We use initial conditions \((V, W) = (2, 0)\) and discretize the time interval \([0, 20]\) at \(0.2\) second increments.
For given \(x\), let \(V(t; x)\) and \(W(t; x)\) be solutions of~\eqref{eq:fitz}.
Define variables \(v_i(x) \approx V(t_i; x)\), \(w_i(x) \approx W(t_i; x)\), \(i = 1, \dots, n+1\) where \(n = 20 / 0.2 = 100\).
We set \(F(x) := (v(x), w(x))\), where \(v(x) := (v_1(x), \dots, v_{n+1}(x))\) and \(w(x) := (w_1(x), \dots, w_{n+1}(x))\).
We generate \(b\) using \(x_{\textup{true}} = (0, 0.2, 1, 0, 0)\), which corresponds to a solve of the \citeauthor{van1926lxxxviii} oscillator.
To recover \(x\), we solve
\begin{equation}
  \label{eq:nonlin}
  \minimize{x} \tfrac{1}{2} \|F(x) - b\|^2_2 + h(x),
\end{equation}
with \(h(x) = \|x\|_0\).
ODE solves are performed with the DifferentialEquations.jl package \citep{rackauckas2017differentialequations}, which features an mechanism for choosing the solver, and provides \(\nabla v(x)\) and \(\nabla w(x)\) by way of automatic differentiation.
% Should the ODE solve fail to converge, \smarttodo{what do we do?}
% the condition number of the Hessian approximation is greater than  \(10^4\).
% We set the stopping tolerance to \(\epsilon = 10^{-3}\) in \Cref{alg:tr-nonsmooth}, PANOC and ZeroFPR\@.
We set \(\epsilon = 10^{-3}\) in all methods, the maximum iterations to \(500\), and use an LBFGS approximation of the Hessian.
For \Cref{alg:tr-nonsmooth}, the maximum number of inner iterations is \(5000\).
% , and the inner proximal gradient stopping tolerance to \(10^{-6}\).
%  and the maximum number of outer iterations to \(500\).

\Cref{tab:fhl0} summarizes our results and \Cref{fig:fhl0} shows overall data fit and objective function traces.
\Cref{alg:tr-nonsmooth} with either PG or R2 as subsolver, as well as ZeroFPR, correctly identified the nonzero pattern of \(x\) with reasonable error in the nonzero elements.
PANOC performs well initially, but its linesearch routine terminates prematurely as it generates a step length that is below a preset tolerance of \(10^{-7}\).
At that point, PANOC terminates.
ZeroFPR performs well, but needs many iterations to decrease the objective value to the same level as \Cref{alg:tr-nonsmooth}.
As in \cref{sec:bpdn}, \Cref{alg:tr-nonsmooth} converges with significantly fewer gradient evaluations than ZeroFPR, though with a significant number of proximal operator evaluations.
However, gradient evaluations in~\eqref{eq:fitz} are far more expensive and time consuming than proximal evaluations.
% From comparing parameter values, our algorithm can effectively enforce true sparsity along with ZeroFPR and PANOC in linear operator cases.
% That being said, in the process of creating these results, we discovered our algorithm is more robust to parameter and trust-region initialization.
\Cref{fig:fhl0} also reveals that the final iterate generated by \Cref{alg:tr-nonsmooth} and ZeroFPR results in trajectories that are visually indistinguishable from those associated with the exact solution.
\Cref{alg:tr-nonsmooth} with \Cref{alg:qr-nonsmooth} as a subsolver reaches a similar solution as ZeroFPR, but requires much fewer proximal and gradient evaluations.
The results appear in \Cref{tab:fhl0}.
Plots are nearly identical to those in \Cref{fig:fhl0}, and are hence omitted.

\begin{table}[h]
  \captionsetup{position=top}
  \caption{%
    Results for \Cref{alg:tr-nonsmooth} with proximal gradient (TR-PG) and \Cref{alg:qr-nonsmooth} (TR-R2) subsolvers, PANOC, and ZeroFPR applied to~\eqref{eq:fitz} with \(h = \|\cdot\|_0\), \(\Delta \mathbb{B}_\infty\) and LBFGS approximation.%
  }%
  \label{tab:fhl0}
  \begin{center}
  \footnotesize
 \setlength{\tabcolsep}{2pt}
 \begin{tabular}{  c  c  c  c  c   ||  c  c  c  c  c  c  }
    % Table header
\multicolumn{5}{c||}{Parameters} & \\ True & TR-PG & TR-R2 & PANOC & ZFP  & Measure & True & TR-PG & TR-R2 & PANOC & ZFP \\
    \hline
    % Table body
  0    & 0     & 0     & 0.840 & 0       & $ f(x) $                      & 1.058 & 1.078 & 1.266 & 73.888 & 1.048 \\
  0.2  & 0.170 & 0.130 & 0.690 & 0.188   & $ h(x) $                      & 2     & 2     & 3     & 5      & 3 \\
  1.0  & 1.136 & 1.408 & 0.952 & 1.048   & $ ||x - x_{\text{true}}||_2 $ & 0     & 0.139 & 0.427 & 1.636  & 0.051 \\
  0    & 0     & 0.107 & 0.983 & 0.010   & $ \nabla f $ evals            &       & 76    & 61    & 43     & 422 \\
  0    & 0     & 0     & 0.874 & 0       & $ \prox{\nu\psi}$ calls       &       & 60143 & 22617 & 30     & 421 \\
  \hline
\end{tabular}

  \end{center}
\end{table}

\begin{figure}[h]
  \centering
  \subfloat[Solution with data]{\label{fig:fh0_x}\includetikzgraphics[width=0.49\linewidth]{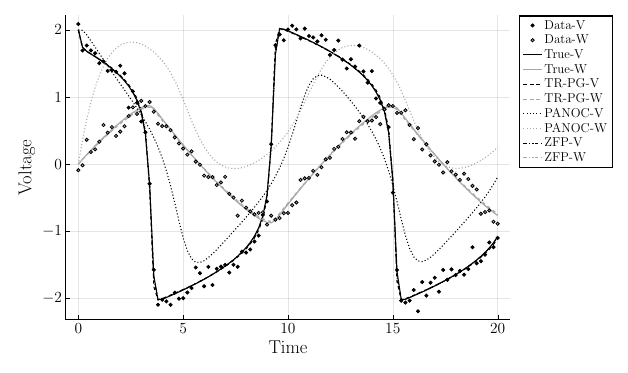}}
  \hfill
  \subfloat[Objective Function~\eqref{eq:nonlin} history]{\label{fig:fh0_obj}\includetikzgraphics[width=0.49\linewidth]{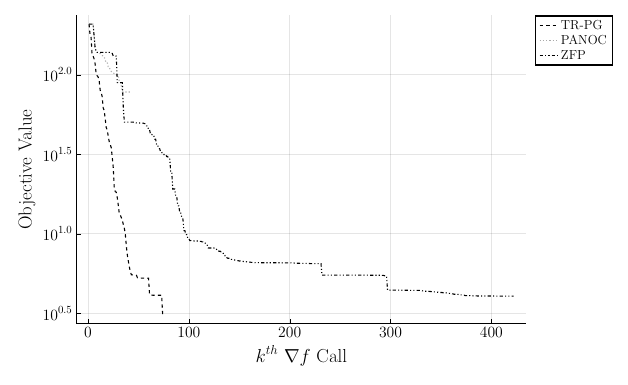}}
  % \subfloat[Inner iterations per trust-region iteration ]{\label{fig:fh0_comp}\includetikzgraphics[width=.5\linewidth]{figs/nonlin/FH/l0/complexity}}
  \caption{Solution of~\eqref{eq:fitz} with \(h(x) = \|x\|_0\) in~\eqref{eq:nonlin}, \(\Delta \mathbb{B}_\infty\) and LBFGS approximation.}%
  \label{fig:fhl0}
\end{figure}

We also compare \Cref{alg:qr-nonsmooth} to our own implementation of a standard proximal gradient with linesearch on~\eqref{eq:nonlin}.
We set the stopping tolerance for both to \(10^{-3}\). %, and a maximum of \(5000\) iterations.
\Cref{tab:fhl0pg} summarizes our results and \Cref{fig:fhl0pg} shows overall data fit and objective function traces.
Both \Cref{alg:qr-nonsmooth} and proximal gradient descent converge much slower than \Cref{alg:tr-nonsmooth}, where we use curvature information.
Neither algorithm correctly identified the nonzero pattern of \(x\) within \(5000\) iterations, although \Cref{alg:qr-nonsmooth} descends considerably faster than proximal gradient descent, and attains the stopping tolerance.  %\@; both exhibit slow convergence to the local minimum as \(\nabla f\) calls increase.
% Gradient evaluations in~\eqref{eq:fitz} are far more expensive and time consuming that proximal evaluations.
% From comparing parameter values, our algorithm can effectively enforce true sparsity along with ZeroFPR and PANOC in linear operator cases.
% That being said, in the process of creating these results, we discovered our algorithm is more robust to parameter and trust-region initialization.
\Cref{fig:fhl0pg} reveals that the final iterate generated by \Cref{alg:qr-nonsmooth} is closer to the solution than that of proximal gradient descent, though both terminated far from the correct answer.

\begin{table}[h]
  \captionsetup{position=top}
  \caption{%
    Results for \Cref{alg:qr-nonsmooth} (R2) and proximal gradient descent (PG) applied to~\eqref{eq:fitz} with \(h = \|\cdot\|_0\).%
  }%
  \label{tab:fhl0pg}
  \begin{center}
  \footnotesize
 \setlength{\tabcolsep}{3pt}
 \begin{tabular}{  c  c  c   ||  c  c  c  c  }
    % Table header
\multicolumn{3}{c||}{Parameters} & \\ True & R2 & PG  & Measure & True & R2 & PG \\
    \hline
    % Table body
  0 & 0 & 0.228   & $ f(x) $ & 1.058 & 3.852 & 24.246 \\
  0.200 & 0.142 & 0.245   & $ h(x) $ & 2 & 4 & 5 \\
  1.000 & 1.392 & 1.083   & $ ||x - x_{\text{true}}||_2 $ & 0 & 0.737 & 1.045 \\
  0 & 0.621 & 0.916   & $ \nabla f $ evals &   & 3892 & 5010 \\
  0 & 0.022 & 0.440   & $ \prox{\nu\psi}$ calls &   & 8891 & 5009 \\
  \hline
\end{tabular}

  \end{center}
\end{table}

\begin{figure}[h]
  \centering
  \subfloat[Solution with data]{\label{fig:fh0_x_pg}\includetikzgraphics[width=0.49\linewidth]{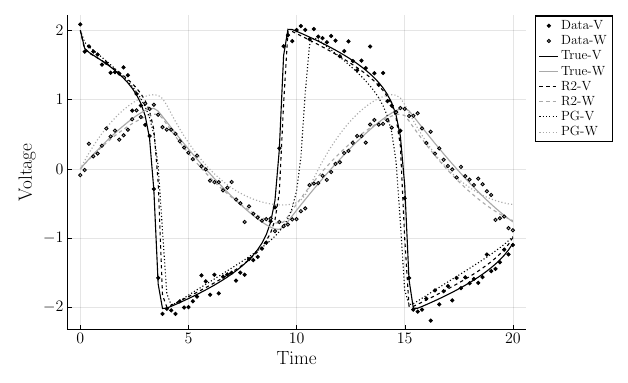}}
  \hfill
  \subfloat[Objective Function~\eqref{eq:nonlin} history]{\label{fig:fh0_obj_pg}\includetikzgraphics[width=0.49\linewidth]{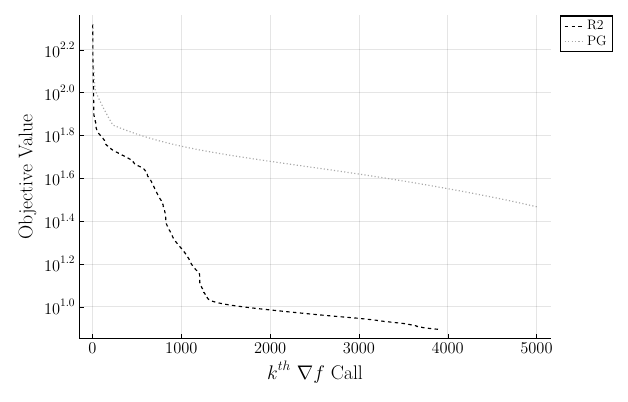}}
  % \subfloat[Inner iterations per trust-region iteration ]{\label{fig:fh0_comp}\includetikzgraphics[width=.5\linewidth]{figs/nonlin/FH/l0/complexity}}
  \caption{Solution of~\eqref{eq:fitz} for \(h = \|\cdot\|_0\) in~\eqref{eq:nonlin} with PG with linesearch and \Cref{alg:qr-nonsmooth}.}%
  \label{fig:fhl0pg}
\end{figure}

  \section{Discussion and perspectives}%
\label{sec:conclusion}

% One of the strengths of trust-region methods is to naturally allow nonconvex models to compute steps.
We demonstrated the performance of trust-region methods using quasi-Newton models against two linesearch methods constrained to LBFGS models, and observed faster convergence curves with fewer gradient evaluations.
Many regularizers in~\eqref{eq:nlp} have a closed-form or efficiently-computable proximal operator, whose cost is often dominated by that of a function or gradient evaluation in a large inverse problem.

The worst-case iteration complexity bound of \Cref{alg:tr-nonsmooth} matches the best known bound for trust-region methods in smooth optimization.
\Cref{alg:qr-nonsmooth}, a first-order method that is related to the proximal gradient method with adaptive steplength, does not require prior knowledge or estimation of a Lipschitz constant, and has a straightforward complexity analysis similar to that of \Cref{alg:tr-nonsmooth}. In practice, using curvature information in \Cref{alg:tr-nonsmooth} proved useful for efficiently estimating highly nonlinear nonsmooth models.
% We did not need the strategy of \cref{sec:hess-bounded} in our experiments.
Convergence of trust-region methods for smooth optimization can be established even if Hessian approximations are unbounded, provided they do not deteriorate too fast.
It may be possible to generalize our analysis along similar lines.

Interesting directions left to future work include implementation and analysis for \emph{inexact} function, gradient, and proximal operator evaluations, and extensions of our results to cubic regularization, and more general nonlinear stepsize control-type methods, such as those of \citep{grapiglia-yuan-yuan-2016}.

  \footnotesize
  \bibliographystyle{abbrvnat}
  \bibliography{abbrv,tr-nonsmooth}

\begin{thebibliography}{46}
\providecommand{\natexlab}[1]{#1}
\providecommand{\url}[1]{\texttt{#1}}
\expandafter\ifx\csname urlstyle\endcsname\relax
  \providecommand{\doi}[1]{doi: #1}\else
  \providecommand{\doi}{doi: \begingroup \urlstyle{rm}\Url}\fi

\bibitem[Aravkin and Davis(2020)]{aravkin2020trimmed}
A.~Aravkin and D.~Davis.
\newblock \doilink{10.1287/moor.2019.0992}{Trimmed statistical estimation via
  variance reduction}.
\newblock \emph{Math. Oper. Res.}, 45\penalty0 (1):\penalty0 292--322, 2020.

\bibitem[Baraldi et~al.(2019)Baraldi, Kumar, and Aravkin]{baraldi2019basis}
R.~Baraldi, R.~Kumar, and A.~Aravkin.
\newblock \doilink{10.1109/TSP.2019.2946029}{Basis pursuit denoise with
  nonsmooth constraints}.
\newblock \emph{IEEE T. Signal Proces.}, 67\penalty0 (22):\penalty0 5811--5823,
  2019.

\bibitem[Bauschke and Combettes(2011)]{combettes2017operators}
H.~H. Bauschke and P.~L. Combettes.
\newblock \doilink{10.1007/978-3-319-48311-5}{\emph{Convex Analysis and
  Monotone Operator Theory in Hilbert Spaces}}.
\newblock Springer Science, 2011.

\bibitem[Beck(2017)]{beckFO}
A.~Beck.
\newblock \doilink{10.1137/1.9781611974997}{\emph{First Order Methods in
  Optimization}}.
\newblock SIAM, Philadelphia, USA, 2017.

\bibitem[Bezanson et~al.(2017)Bezanson, Edelman, Karpinski, and
  Shah]{bezanson-edelman-karpinski-shah-2017}
J.~Bezanson, A.~Edelman, S.~Karpinski, and V.~B. Shah.
\newblock \href{https://doi.org/10.1137/141000671}{Julia: A fresh approach to
  numerical computing}.
\newblock \emph{SIAM Rev.}, 59\penalty0 (1):\penalty0 65--98, 2017.

\bibitem[Blumensath and Davies(2009)]{blumensath2009iterative}
T.~Blumensath and M.~E. Davies.
\newblock Iterative hard thresholding for compressed sensing.
\newblock \emph{Appl. Comput. Harmon. A.}, 27\penalty0 (3):\penalty0 265--274,
  2009.

\bibitem[Bolte et~al.(2014)Bolte, Sabach, and Teboulle]{palm}
J.~Bolte, S.~Sabach, and M.~Teboulle.
\newblock \doilink{10.1007/s10107-013-0701-9}{Proximal alternating linearized
  minimization for nonconvex and nonsmooth problems}.
\newblock \emph{Math. Program.}, \penalty0 (146):\penalty0 459–--494, 2014.

\bibitem[Boţ et~al.(2016)Boţ, Csetnek, and László]{bot-csetnek-laszlo-2016}
R.~I. Boţ, E.~R. Csetnek, and S.~László.
\newblock \doilink{10.1007/s13675-015-0045-8}{An inertial forward–backward
  algorithm for the minimization of the sum of two nonconvex functions}.
\newblock \emph{EURO J. Comput. Optim.}, \penalty0 (4):\penalty0 3--25, 2016.

\bibitem[Cartis et~al.(2011)Cartis, Gould, and Toint]{cartis-gould-toint-2011}
C.~Cartis, N.~I.~M. Gould, and {\relax Ph}.~L. Toint.
\newblock \doilink{10.1137/11082381X}{On the evaluation complexity of composite
  function minimization with applications to nonconvex nonlinear programming}.
\newblock \emph{SIAM J. Optim.}, 21\penalty0 (4):\penalty0 1721--1739, 2011.

\bibitem[Combettes and Pesquet(2011)]{combettes2011proximal}
P.~L. Combettes and J.-C. Pesquet.
\newblock \doilink{10.1007/978-1-4419-9569-8_10}{Proximal splitting methods in
  signal processing}.
\newblock In \emph{Fixed-point algorithms for inverse problems in science and
  engineering}, pages 185--212. Springer, 2011.

\bibitem[Conn et~al.(2000)Conn, Gould, and Toint]{conn-gould-toint-2000}
A.~R. Conn, N.~I.~M. Gould, and {\relax Ph}.~L. Toint.
\newblock \doilink{10.1137/1.9780898719857}{\emph{Trust-Region Methods}}.
\newblock Number~1 in MOS-SIAM Series on Optimization. SIAM, Philadelphia, USA,
  2000.

\bibitem[Curtis et~al.(2018)Curtis, Lubberts, and
  Robinson]{curtis-lubberts-robinson-2018}
F.~Curtis, Z.~Lubberts, and D.~Robinson.
\newblock \doilink{10.1007/s11590-018-1286-2}{Concise complexity analyses for
  trust region methods}.
\newblock \emph{Optim. Lett.}, \penalty0 (12):\penalty0 1713–--1724, 2018.

\bibitem[Curtis et~al.(2017)Curtis, Robinson, and
  Samadi]{curtis-robinson-samadi-2017}
F.~E. Curtis, D.~P. Robinson, and M.~Samadi.
\newblock \doilink{10.1007/s10107-016-1026-2}{A trust region algorithm with a
  worst-case iteration complexity of \(\mathcal{O}(\epsilon^{-3/2})\) for
  nonconvex optimization}.
\newblock \emph{Math. Program., Series~A}, \penalty0 (162):\penalty0 1--32,
  2017.

\bibitem[Dennis et~al.(1995)Dennis, Li, and Tapia]{dennis-li-tapia-1995}
J.~Dennis, S.~Li, and R.~Tapia.
\newblock \doilink{10.1007/BF01585770}{A unified approach to global convergence
  of trust region methods for nonsmooth optimization}.
\newblock \emph{Math. Program.}, \penalty0 (68):\penalty0 319–--346, 1995.

\bibitem[{Dennis Jr.} and Mei(1979)]{dennis-mei-1979}
J.~E. {Dennis Jr.} and H.~H.~W. Mei.
\newblock \doilink{10.1007/BF00932218}{Two new unconstrained optimization
  algorithms which use function and gradient values}.
\newblock \emph{J. Optim. Theory and Applics.}, 28:\penalty0 453–--482, 1979.

\bibitem[Donoho(2006)]{donoho2006compressed}
D.~L. Donoho.
\newblock \doilink{10.1109/TIT.2006.871582}{Compressed sensing}.
\newblock \emph{IEEE T. Inform. Theory}, 52\penalty0 (4):\penalty0 1289--1306,
  2006.

\bibitem[Fan and Li(2001)]{fan2001variable}
J.~Fan and R.~Li.
\newblock \doilink{10.1198/016214501753382273}{Variable selection via
  nonconcave penalized likelihood and its oracle properties}.
\newblock \emph{J. Am. Stat. Assoc.}, 96\penalty0 (456):\penalty0 1348--1360,
  2001.

\bibitem[FitzHugh(1955)]{fitzhugh1955mathematical}
R.~FitzHugh.
\newblock \doilink{10.1007/BF02477753}{Mathematical models of threshold
  phenomena in the nerve membrane}.
\newblock \emph{B. Math. Biophys.}, 17\penalty0 (4):\penalty0 257--278, 1955.

\bibitem[Gao and Bruce(1997)]{gao1997waveshrink}
H.-Y. Gao and A.~G. Bruce.
\newblock \href{https://www.jstor.org/stable/i24306126}{Waveshrink with firm
  shrinkage}.
\newblock \emph{Stat. Sinica}, 7:\penalty0 855--874, 1997.

\bibitem[Grapiglia et~al.(2016)Grapiglia, Yuan, and
  Yuan]{grapiglia-yuan-yuan-2016}
G.~Grapiglia, J.~Yuan, and Y.~Yuan.
\newblock \doilink{10.1007/s10957-016-1007-x}{Nonlinear stepsize control
  algorithms: Complexity bounds for first- and second-order optimality}.
\newblock \emph{J. Optim. Theory and Applics.}, \penalty0 (171):\penalty0
  980--–997, 2016.

\bibitem[Hare and Sagastiz{\'a}bal(2009)]{Hare2009}
W.~Hare and C.~Sagastiz{\'a}bal.
\newblock \doilink{10.1007/s10107-007-0124-6}{Computing proximal points of
  nonconvex functions}.
\newblock \emph{Math. Program.}, 116\penalty0 (1):\penalty0 221--258, Jan 2009.

\bibitem[Kim et~al.(2010)Kim, Sra, and Dhillon]{kim-sra-dhillon-2010}
D.~Kim, S.~Sra, and I.~S. Dhillon.
\newblock \href{https://icml.cc/Conferences/2010/papers/562.pdf}{A scalable
  trust-region algorithm with application to mixed-norm regression}.
\newblock In \emph{ICML}, pages 519--526, 2010.

\bibitem[Lee et~al.(2014)Lee, Sun, and Saunders]{lee-sun-saunders-2014}
J.~D. Lee, Y.~Sun, and M.~A. Saunders.
\newblock \doilink{10.1137/130921428}{Proximal {N}ewton-type methods for
  minimizing composite functions}.
\newblock \emph{SIAM J. Optim.}, 24\penalty0 (3):\penalty0 1420--1443, 2014.

\bibitem[Li and Lin(2015)]{li-lin-2015}
H.~Li and Z.~Lin.
\newblock
  \href{http://irc.cs.sdu.edu.cn/973project/result/download/2015/28.AcceleratedProximal.pdf}{Accelerated
  proximal gradient methods for nonconvex programming}.
\newblock In \emph{Proceedings of the 28th International Conference on Neural
  Information Processing Systems - Volume 1}, NIPS'15, pages 379--387,
  Cambridge, MA, USA, 2015. MIT Press.

\bibitem[Lions and Mercier(1979)]{lions1979fbs}
P.~Lions and B.~Mercier.
\newblock \doilink{10.1137/0716071}{Splitting algorithms for the sum of two
  nonlinear operators}.
\newblock \emph{SIAM J. Numer. Anal.}, 16\penalty0 (6):\penalty0 964–--979,
  1979.

\bibitem[Lotfi et~al.(2020)Lotfi, {Bonniot de Ruisselet}, Orban, and
  Lodi]{lotfi-bonniot-orban-lodi-2020}
S.~Lotfi, T.~{Bonniot de Ruisselet}, D.~Orban, and A.~Lodi.
\newblock \doilink{10.13140/RG.2.2.27851.41765/1}{Stochastic damped {L-BFGS}
  with controlled norm of the {H}essian approximation}.
\newblock 2020.
\newblock OPT2020 Conference on Optimization for Machine Learning.

\bibitem[Martínez and Moretti(1997)]{martinez-moretti-1997}
J.~M. Martínez and A.~C. Moretti.
\newblock \doilink{10.1007/BF02614392}{A trust region method for minimization
  of nonsmooth functions with linear constraints}.
\newblock \emph{Math. Program.}, \penalty0 (76):\penalty0 431--449, 1997.

\bibitem[Mor\'e and Sorensen(1983)]{more-sorensen-1983}
J.~J. Mor\'e and D.~C. Sorensen.
\newblock \doilink{10.1137/0904038}{Computing a trust region step}.
\newblock \emph{SIAM J. Sci. and Statist. Comput.}, 4\penalty0 (3):\penalty0
  553--572, 1983.

\bibitem[Nagumo et~al.(1962)Nagumo, Arimoto, and Yoshizawa]{nagumo1962active}
J.~Nagumo, S.~Arimoto, and S.~Yoshizawa.
\newblock \doilink{10.1109/JRPROC.1962.288235}{An active pulse transmission
  line simulating nerve axon}.
\newblock \emph{Proceedings of the IRE}, 50\penalty0 (10):\penalty0 2061--2070,
  1962.

\bibitem[Nesterov(2007)]{nesterov-2007}
Y.~Nesterov.
\newblock \doilink{10.1080/08927020600643812}{Modified {G}auss–{N}ewton
  scheme with worst case guarantees for global performance}.
\newblock \emph{Optim. Method Softw.}, 22\penalty0 (3):\penalty0 469--483,
  2007.

\bibitem[Orban and Siqueira(2019)]{linearoperators}
D.~Orban and A.~S. Siqueira.
\newblock \doilink{10.5281/zenodo.2559295}{Linearoperators.jl.}, February 2019.

\bibitem[Powell(1970)]{powell-1970}
M.~J.~D. Powell.
\newblock \doilink{10.1016/B978-0-12-597050-1.50006-3}{A new algorithm for
  unconstrained optimization}.
\newblock In J.~Rosen, O.~Mangasarian, and K.~Ritter, editors, \emph{Nonlinear
  Programming}, pages 31--65. Academic Press, 1970.

\bibitem[Qi and Sun(1994)]{qi-sun-1994}
L.~Qi and J.~Sun.
\newblock \doilink{10.1007/BF01581136}{A trust region algorithm for
  minimization of locally {L}ipschitzian functions}.
\newblock \emph{Math. Program.}, \penalty0 (66):\penalty0 25–--43, 1994.

\bibitem[Rackauckas and Nie(2017)]{rackauckas2017differentialequations}
C.~Rackauckas and Q.~Nie.
\newblock \doilink{10.5334/jors.151}{Differentialequations.jl--a performant and
  feature-rich ecosystem for solving differential equations in {J}ulia}.
\newblock \emph{J. Open Res. Softw.}, 5\penalty0 (1), 2017.

\bibitem[Revels et~al.(2016)Revels, Lubin, and Papamarkou]{forwarddiff}
J.~Revels, M.~Lubin, and T.~Papamarkou.
\newblock \href{https://arxiv.org/abs/1607.07892}{Forward-mode automatic
  differentiation in {J}ulia}, 2016.
\newblock \url{https://arxiv.org/abs/1607.07892}.

\bibitem[Rockafellar and Wets(1998)]{rtrw}
R.~Rockafellar and R.~Wets.
\newblock \doilink{10.1007/978-3-642-02431-3}{\emph{Variational Analysis}},
  volume 317.
\newblock Springer Verlag, 1998.

\bibitem[Steihaug(1983)]{steihaug-1983}
T.~Steihaug.
\newblock \doilink{10.1137/0720042}{The conjugate gradient method and trust
  regions in large scale optimization}.
\newblock \emph{SIAM J. Numer. Anal.}, 20\penalty0 (3):\penalty0 626--637,
  1983.

\bibitem[{Stella} et~al.(2017){Stella}, {Themelis}, {Sopasakis}, and
  {Patrinos}]{stella-themelis-sopasakis-patrinos-2017}
L.~{Stella}, A.~{Themelis}, P.~{Sopasakis}, and P.~{Patrinos}.
\newblock \doilink{10.1109/CDC.2017.8263933}{A simple and efficient algorithm
  for nonlinear model predictive control}.
\newblock In \emph{2017 IEEE 56th Annual Conference on Decision and Control
  (CDC)}, pages 1939--1944, 2017.

\bibitem[Themelis et~al.(2018)Themelis, Stella, and
  Patrinos]{themelis-stella-patrinos-2017}
A.~Themelis, L.~Stella, and P.~Patrinos.
\newblock \doilink{10.1137/16M1080240}{Forward-backward envelope for the sum of
  two nonconvex functions: Further properties and nonmonotone linesearch
  algorithms}.
\newblock \emph{SIAM J. Optim.}, 28\penalty0 (3):\penalty0 2274--2303, 2018.

\bibitem[Tibshirani(1996)]{tibshirani1996regression}
R.~Tibshirani.
\newblock \doilink{10.1111/j.2517-6161.1996.tb02080.x}{Regression shrinkage and
  selection via the lasso}.
\newblock \emph{J. Roy. Statist. Soc. Ser. B}, 58\penalty0 (1):\penalty0
  267--288, 1996.

\bibitem[van~den Berg and Friedlander(2008)]{spgl1}
E.~van~den Berg and M.~P. Friedlander.
\newblock \doilink{10.1137/080714488}{Probing the pareto frontier for basis
  pursuit solutions}.
\newblock \emph{SIAM J. Sci. Comput.}, 31\penalty0 (2):\penalty0 890--912, Nov.
  2008.
\newblock ISSN 1064-8275.

\bibitem[Van~der Pol(1926)]{van1926lxxxviii}
B.~Van~der Pol.
\newblock \doilink{10.1080/14786442608564127}{Lxxxviii. {O}n
  “relaxation-oscillations”}.
\newblock \emph{The London, Edinburgh, and Dublin Philosophical Magazine and
  Journal of Science}, 2\penalty0 (11):\penalty0 978--992, 1926.

\bibitem[Yuan(1985)]{yuan-1985}
Y.-X. Yuan.
\newblock \doilink{10.1007/BF02591750}{Conditions for convergence of trust
  region algorithms for nonsmooth optimization}.
\newblock \emph{Math. Program.}, \penalty0 (31):\penalty0 220--–228, 1985.

\bibitem[Zhang et~al.(2010)]{zhang2010nearly}
C.-H. Zhang et~al.
\newblock \doilink{10.1214/09-AOS729}{Nearly unbiased variable selection under
  minimax concave penalty}.
\newblock \emph{Ann. Stat.}, 38\penalty0 (2):\penalty0 894--942, 2010.

\bibitem[Zheng and Aravkin(2020)]{zheng2020relax}
P.~Zheng and A.~Aravkin.
\newblock \doilink{10.1088/1361-6420/aba417}{Relax-and-split method for
  nonconvex inverse problems}.
\newblock \emph{Inverse Problems}, 36\penalty0 (9):\penalty0 095013, 2020.

\bibitem[Zheng et~al.(2018)Zheng, Askham, Brunton, Kutz, and
  Aravkin]{zheng2018unified}
P.~Zheng, T.~Askham, S.~L. Brunton, J.~N. Kutz, and A.~Y. Aravkin.
\newblock \doilink{10.1109/ACCESS.2018.2886528}{A unified framework for sparse
  relaxed regularized regression: {SR3}}.
\newblock \emph{IEEE Access}, 7:\penalty0 1404--1423, 2018.

\end{thebibliography}

  % \clearpage
  % \tableofcontents
  % \listoftodos\relax

\end{document}